\pdfoutput=1
%% jnsao_template.tex
%
% Template to demonstrate the use of the LaTeX class for the
%
%    Journal of Nonsmooth Analysis and Optimization
%
% Web: http://jnsao.episciences.org
% E-mail: jnsao@episciences.org
%
%%%%%%%%%%%%%%%%%%%%%%%%%%%%%%%%%%%%%%%%%%%%%%%%%%%%%%%%%%%%

\documentclass[english]{jnsao}
\usepackage[utf8]{inputenc}
\usepackage{csquotes}
\usepackage{babel}
\usepackage{amsmath,amsthm}
\usepackage[nameinlink,capitalise]{cleveref}
\usepackage{tikz}

%%%%%%%%%%%%%%%%%%%%%
%\usepackage{etoolbox}
%%\patchcmd{\thebibliography}
%%  {\settowidth}
%%  {\setlength{\itemsep}{0pt plus 0.1pt}\settowidth}
%%  {}{}
%\patchcmd{\thebibliography}
%  {\settowidth}
%  {\setlength{\parsep}{0.2pt}\setlength{\itemsep}{0pt plus 0.5pt}\settowidth}
%  {}{}
%\apptocmd{\thebibliography}
%  {\small}
%  {}{}
%%%%%%%%%%%%%%%%%%%%

%%%%%%%%%%%%%%%%%%%%%%%%%%%%%%%%%%%%%%%%%%%%%%%
% New theorem-like environments
%%%%%%%%%%%%%%%%%%%%%%%%%%%%%%%%%%%%%%%%%%%%%%%
%\theoremstyle{definition}
%\newtheorem{assumption}[theorem]{Assumption}

%\theoremstyle{theorem}
%\newtheorem{conjecture}[theorem]{Conjecture}

%%%%%%%%%%%%%%%%%%%%%%%%%%%%%%%%%%%%%%%%%%%%%%%
% Manuscript information.
%%%%%%%%%%%%%%%%%%%%%%%%%%%%%%%%%%%%%%%%%%%%%%%

% Fill this information once accepted for publication and the information
% is provided by the editors
\manuscriptsubmitted{2020-06-18}
\manuscriptaccepted{2020-12-10}
\manuscriptvolume{1}
\manuscriptnumber{6575}
\manuscriptyear{2020}
\manuscriptdoi{10.46298/jnsao-2020-6575}

%% You can also set the following information to include the eprint
%% information in the header
%\manuscripteprinttype{arXiv}
%\manuscripteprint{1901.00001}
%\manuscripteprinttype{HAL}
%\manuscripteprint{hal--01969102}

\newcommand\norm[1]{\left\Vert#1\right\Vert}

\newcommand\N{\mathbb{N}}
\newcommand\R{\mathbb{R}}

\newcommand{\dist}{\operatorname{dist}}
\newcommand{\dom}{\operatorname{dom}}
\newcommand{\gph}{\operatorname{gph}}
\newcommand{\epi}{\operatorname{epi}}
\newcommand{\tto}{\rightrightarrows}
\DeclareMathOperator*{\argmin}{\operatorname{argmin}}

\crefname{figure}{Figure}{Figures}

\allowdisplaybreaks

\numberwithin{equation}{section}

\hyphenation{Lip-schitz-ian sub-dif-fe-ren-tials}

%%%%%%%%%%%%%%%%%%%%%%%%%%%%%%%%%%%%%%%%%%%%%%
%tikz stuff

\usetikzlibrary{arrows}

\tikzset{
    %Define style for boxes
    punkt/.style={
           rectangle,
           draw=white, very thick,
           text width=10em,
           minimum height=1.5em,
           text centered}
}

%%%%%%%%%%%%%%%%%%%%%%%%%%%%%%%%%%%%%%%%%%%%%%

% Manuscript title.
% To ensure that PDF metadata is added without errors, if using LaTeX
% commands within \title, provide a text-only alternative using
% \texorpdfstring{tex string}{plain string}
\title{Asymptotic stationarity and regularity for nonsmooth optimization problems}

% Manuscript date.
% Use ISO-8601 standard yyyy-mm-dd date format.
%\date{\ISOToday}

% Authorship.
% Give affiliations and contact information using \thanks.
% \shortauthor is a plain surname-only version for running heads.

\author{%
	Patrick Mehlitz
	\thanks{
	Institute of Mathematics, Chair of Optimal Control.
	Brandenburgische Technische Universität Cottbus--Senftenberg, Germany.
	\email{mehlitz@b-tu.de}.
	ORCID: 0000-0002-9355-850X.
	}
}
\shortauthor{P.\ Mehlitz}

\shorttitle{Asymptotic regularity in optimization theory}

% Acknowledgements: funding, etc.
\acknowledgements{
    The author would like to thank Gerd Wachsmuth for pointing out the simple projection
	argument in the proof of \cref{thm:polyhedrality_and_AMS_regularity}. 
	Furthermore, the author gratefully acknowledges discussions with Mat\'{u}\v{s} Benko 
	about the relationship between AM- and FJM-stationary points which climaxed in some
	results stated in \cref{sec:asymptotic_stationarity}.
}

%
%\AtBeginDocument{
%    \hypersetup{
%        pdftitle = {\thetitle},
%        pdfauthor = {\theauthor},
%    }
%}

%%%%%%%%%%%%%%%%%%%%%%%%%%%%%%%%%%%%%%%%%%%%%%%
\begin{document}
%%%%%%%%%%%%%%%%%%%%%%%%%%%%%%%%%%%%%%%%%%%%%%%

\maketitle

\begin{abstract}
    Based on the tools of limiting variational analysis, we derive a
	 sequential necessary optimality condition for nonsmooth mathematical
	 programs which holds without any additional assumptions.
	 In order to ensure that stationary points in this new sense are already
	 Mordukhovich-stationary, the presence of a constraint qualification 
	 which we call AM-regularity is necessary. We investigate the relationship between
	 AM-regularity and other constraint qualifications from nonsmooth
	 optimization like metric (sub-)regularity of the underlying feasibility
	 mapping. Our findings are applied to optimization problems with geometric
	 and, particularly, disjunctive constraints. This way, it is shown that
	 AM-regularity recovers recently introduced cone-continuity-type constraint
	 qualifications, sometimes referred to as AKKT-regularity, 
	 from standard nonlinear and complementarity-constrained optimization.
	 Finally, we discuss some consequences of AM-regularity for the limiting
	 variational calculus.
	 \\[2ex]
	\noindent
	\emph{Keywords:}
		 	Asymptotic regularity, Asymptotic stationarity, Constraint qualifications, 
			M-stationarity, Nonsmooth optimization, Variational analysis
	\\[2ex]
	\noindent
	\emph{MSC (2020):} 
		49J52, 49J53, 90C30, 90C33
\end{abstract}

%%%%%%%%%%%%%%%%%%%%%%%%%%%%%%%%%%%%%%%%%%%%%%%
\section{Introduction}\label{sec:introduction}

Due to their inherent practical relevance in the context of solution algorithms
for optimization problems, sequential necessary optimality conditions and constraint
qualifications became quite popular during the last decade. A suitable theory has
been developed in the context of standard nonlinear programming,
see e.g.\ \cite{AndreaniFazzioSchuverdtSecchin2019,AndreaniHaeserMartinez2011,
AndreaniMartinezRamosSilva2016,AndreaniMartinezRamosSilva2018,AndreaniMartinezSvaiter2010},
complementarity-constrained programming, see \cite{AndreaniHaeserSecchinSilva2019,Ramos2019},
and nonlinear semidefinite programming, see \cite{AndreaniHaeserViana2020}.
Recently, these concepts were generalized to optimization problems in Banach spaces
in \cite{BoergensKanzowMehlitzWachsmuth2019}.
The main idea behind the concept is that even when a local minimizer of a given optimization
problem is not stationary in classical sense (e.g., a Karush--Kuhn--Tucker point in standard
nonlinear programming), it might be \emph{asymptotically} stationary along a sequence of points
converging to the point of interest without any constraint qualification. Now, the question arises which type of
qualification condition is necessary in order to guarantee that an asymptotically stationary
point is already stationary. This indeed leads to the concept of sequential constraint qualifications.
It has been reported in 
\cite{AndreaniFazzioSchuverdtSecchin2019,AndreaniHaeserSecchinSilva2019,AndreaniMartinezRamosSilva2016,
BoergensKanzowMehlitzWachsmuth2019,Ramos2019}
that such sequential constraint qualifications are comparatively weak in comparison with classical
qualification conditions which makes them particularly interesting.

It is a nearby guess that \emph{sequential} stationarity and regularity might be concepts 
which are quite compatible with the popular tools of limiting variational analysis, see e.g.\ 
\cite{Mordukhovich2006,Mordukhovich2018,RockafellarWets1998} and the references therein. 
Indeed, this has been worked out for mathematical problems with complementarity constraints
and the associated concept of Mordukhovich-stationarity (M-stationarity for short) in the
recent paper \cite{Ramos2019}. However, the ideas obviously will work for other classes of
disjunctive programs like mathematical programs with vanishing, switching, or cardinality
constraints as well. 
It is the purpose of this paper to show that the underlying concepts can be further generalized
to a quite abstract class of optimization problems which covers not only all the aforementioned 
settings but also conic as well as cone-complementarity-constrained 
optimization problems and other mathematical programs with equilibrium constraints
which model amongst others that the feasible points need to solve underlying (quasi-) variational
inequalities. Thus, the theory is likely to possess some extensions to bilevel programming
as well.

In this paper, let us consider the mathematical program
\begin{equation}\label{eq:basic_problem}\tag{P}
	\begin{split}
		f(x)&\,\to\,\min\\
		0&\,\in\,\Phi(x)
	\end{split}
\end{equation}
where $f\colon \R^n\to\R$ is a locally Lipschitz continuous function and 
$\Phi\colon \R^n\rightrightarrows \R^m$
is a set-valued mapping whose graph is closed.
Throughout the paper, let $M:=\{x\in\R^n\,|\,0\in\Phi(x)\}$
denote the feasible set of \eqref{eq:basic_problem}. We assume that this set is nonempty.
Let us point out that the theory of this paper stays correct whenever $\R^n$ and $\R^m$
from above are replaced by finite-dimensional Banach spaces $X$ and $Y$. 
Particularly, the results of this manuscript extend to instances of nonlinear semidefinite
programming comprising optimization problems with semidefinite cone complementarity constraints.
Problems of the general form \eqref{eq:basic_problem} have been considered in e.g.\ 
\cite{Gfrerer2013}, \cite[Section~5.2.3]{Mordukhovich2006}, or \cite[Section~3]{YeYe1997}.
In all these contributions, it has been pointed out that whenever $\bar x\in M$ is a local
minimizer of \eqref{eq:basic_problem} such that the mapping $\Phi$ enjoys the
\emph{metric subregularity} property at $(\bar x,0)$, see \cref{sec:set_valued_maps} for 
a definition and additional references to the literature, then $\bar x$ is indeed an 
M-stationary point of this problem. An easy approach to verify the presence of metric
subregularity is given by checking validity of the stronger \emph{metric regularity}
property since the latter can be carried out with the aid of the so-called Mordukhovich criterion
which is stated in terms of the limiting coderivative of $\Phi$, see \cref{sec:variational_analysis}
for details. The latter, however, might be too restrictive which is why several weaker
sufficient conditions for metric subregularity have been worked out 
in particular problem settings during the last years,
see e.g.\ \cite{BaiYeZhang2019,BenkoCervinkaHoheisel2019,GfrererKlatte2016,GfrererYe2017,
HenrionJouraniOutrata2002,HenrionOutrata2001} and the references therein.

As we will see, the sequential approach to necessary optimality conditions and constraint
qualifications for \eqref{eq:basic_problem} leads to a new regularity concept that we call
\emph{asymptotic} Mordukhovich-regularity (AM-regularity for short). The latter is weaker than metric 
regularity of $\Phi$ and not related to the metric subregularity of this map, see 
\cref{ex:metric_subregularity_does_not_imply_AMS_regularity,ex:AMS_regularity_does_not_imply_metric_subregularity}. It, thus, puts some other
light onto the previously known landscape of constraint qualifications which address
\eqref{eq:basic_problem}. Furthermore, we will demonstrate that this new regularity concept
ensures validity of fundamental calculus rules from limiting variational analysis like the
pre-image and the intersection rule, see \cref{thm:representation_of_limiting_normal_cone}
and \cref{sec:variational_calculus}. 
Besides, we show how AM-regularity specifies in exemplary problem settings. 
It will turn out that it covers
several sequential constraint qualifications from the literature. 
Throughout the manuscript, simple examples and counterexamples visualize applicability and
limits of the obtained theory.

The paper is organized as follows: In \cref{sec:preliminaries}, we present the notation exploited
in this manuscript. Furthermore, we review the necessary essentials of set-valued and variational
analysis. \Cref{sec:asymptotic_concepts} is dedicated to the introduction of the asymptotic stationarity
and regularity concepts of our interest. We first derive a sequential necessary optimality
condition of M-stationarity-type in \cref{sec:asymptotic_stationarity} via a simple penalization
argument. Based on that, we introduce the concept of AM-regularity in 
\cref{sec:asymptotic_regularity} and study its theoretical properties as well as its relationship
to other constraint qualifications. In \cref{sec:decoupling_abstract_constraints}, we investigate
the particular situation where the map $\Phi$ can be split in two parts where one is, again,
modeled with the aid of an abstract set-valued mapping while the other one just describes that the variables need to belong to an abstract
constraint set $C\subset\R^n$ which, in practice, can be imagined as a set of simple variational structure.
We show that whenever the set-valued part of $\Phi$ possesses the \emph{Aubin property}
than a weaker constraint qualification than AM-regularity is sufficient for M-stationarity of
local minimizers. We discuss some applications of our results in \cref{sec:applications}.
First, we apply the concept of AM-regularity to the broad class of mathematical problems
with geometric constraints in \cref{sec:geometric_constraints}.
The even more special class of disjunctive optimization problems, where the definition of AM-regularity
can be essentially simplified, is inspected in \cref{sec:disjunctive_programming}.
In this context, a comparison to sequential constraint
qualifications from the literature will be provided. 
Third, we discuss some consequences of AM-regularity for the limiting variational
calculus in \cref{sec:variational_calculus}.
We close the paper with some
concluding remarks in \cref{sec:conclusions}.

\section{Notation and preliminaries}\label{sec:preliminaries}

\subsection{Basic notation}

Throughout the manuscript, we equip $\R^n$ with the Euclidean norm $\norm{\cdot}$.
For some point $x\in\R^n$ and a scalar $\varepsilon>0$, 
$\mathbb B_\varepsilon(x):=\{y\in\R^n\,|\,\norm{y-x}\leq\varepsilon\}$
represents the closed ball around $x$ of radius $\varepsilon$.
For brevity, we exploit $\mathbb B:=\mathbb B_1(0)$.
Let $A\subset\R^n$ be a nonempty set.
We use
\[
	\dist(x,A):=\inf\limits_{y\in A}\norm{y-x}
	\qquad\qquad
	\Pi(x,A):=\argmin\limits_{y\in A}\norm{y-x}
\]
in order to denote the distance of $x$ to $A$ and the associated set of
projections. For brevity, we make use of $A+x=x+A:=\{x+y\in\R^n\,|\,y\in A\}$.
The set
\[
	A^\circ:=\{z\in\R^n\,|\,\forall y\in A\colon\,y^\top z\leq 0\}
\]
is referred to as the polar cone of $A$. It is a nonempty, closed, convex
cone.
The derivative of a differentiable function $F\colon\R^n\to\R^m$ at $x$ will
be represented by $F'(x)\in\R^{m\times n}$ while, in case $m=1$, we
use $\nabla F(x)\in\R^n$ to denote its gradient at $x$.

\subsection{Properties of set-valued mappings}\label{sec:set_valued_maps}

Let $\Upsilon\colon\R^n\tto\R^m$ be a set-valued mapping.
We exploit
\begin{align*}
	\dom\Upsilon&:=\{x\in\R^n\,|\,\Upsilon(x)\neq\varnothing\}\\
	\gph\Upsilon&:=\{(x,y)\in\R^n\times\R^m\,|\,y\in\Upsilon(x)\}\\
	\ker\Upsilon&:=\{x\in\R^n\,|\,0\in\Upsilon(x)\}
\end{align*}
in order to represent the domain, the graph, and the kernel of $\Upsilon$.
Frequently, we will make use of the sequential outer 
Painlev\'{e}--Kuratowski limit of $\Upsilon$
at some point of interest $\bar x\in\dom\Upsilon$ given by
\[
	\limsup\limits_{x\to\bar x}\Upsilon(x)
	:=
	\left\{y\in\R^m\,\middle|\,
		\begin{aligned}
			&\exists\{x_k\}_{k\in\N}\subset\R^n\,\exists \{y_k\}_{k\in\N}\subset\R^m\colon\\
			&\qquad x_k\to\bar x,\,y_k\to y,\,y_k\in\Upsilon(x_k)\,\forall k\in\N
		\end{aligned}
	\right\}.
\]
For some closed set $A\subset\R^n$, we exploit the indicator map
$\Delta_A\colon\R^n\tto\R^m$ given by
\[
	\forall x\in\R^n\colon\quad
	\Delta_A(x):=
		\begin{cases}
			\{0\}	&x\in A \\ \varnothing	&x\notin A
		\end{cases}
\]
where the dimension of the image space will be clear from the context.

In this manuscript, we will often deal with Lipschitzian properties of set-valued mappings.
Recall that $\Upsilon$ is said to be metrically regular at some point 
$(\bar x,\bar y)\in\gph\Upsilon$ whenever there are neighborhoods $U\subset\R^n$ and
$V\subset\R^m$ of $\bar x$ and $\bar y$, respectively, and some constant $\kappa>0$ such that
\[
	\forall x\in U\,\forall y\in V\colon\quad
	\dist(x,\Upsilon^{-1}(y))
	\leq
	\kappa\,\dist(y,\Upsilon(x))
\]
holds. Above, $\Upsilon^{-1}\colon\R^m\tto\R^n$ is the inverse set-valued 
mapping associated with $\Upsilon$
given by $\Upsilon^{-1}(y):=\{x\in\R^n\,|\,y\in\Upsilon(x)\}$ for all $y\in\R^m$.
Fixing $y:=\bar y$ in the definition of metric regularity, we obtain the notion of metric
subregularity, i.e., $\Upsilon$ is said to be metrically subregular at $(\bar x,\bar y)$ if there
are a neighborhood $U\subset\R^n$ of $\bar x$ and a constant $\kappa>0$ such that
\[
	\forall x\in U\colon\quad
	\dist(x,\Upsilon^{-1}(\bar y))
	\leq
	\kappa\,\dist(\bar y,\Upsilon(x))
\]
is valid. The infimum over all such constants $\kappa$ is referred to as 
the modulus of metric subregularity.  
Let us recall that $\Upsilon$ is said to possess the Aubin property at
$(\bar x,\bar y)$ if there are neighborhoods $U\subset\R^n$ and $V\subset\R^m$ of
$\bar x$ and $\bar y$ , respectively, as well as a constant $\kappa>0$ such that the following
estimate is valid:
\begin{equation}\label{eq:Aubin_property}
	\forall x,x'\in U\colon\quad
	\Upsilon(x)\cap V
	\subset
	\Upsilon(x')+\kappa\norm{x-x'}\mathbb B.
\end{equation}
It is well known that $\Upsilon$ possesses the Aubin property at $(\bar x,\bar y)$ if and only
if $\Upsilon^{-1}$ is metrically regular at $(\bar y,\bar x)$.
In the literature, the Aubin property is often referred to as Lipschitz likeness.
Fixing $x':=\bar x$ in the definition of the Aubin property yields the definition of calmness
of $\Upsilon$ at $(\bar x,\bar y)$. The latter is equivalent to metric subregularity of $\Upsilon^{-1}$
at $(\bar y,\bar x)$.
We refer the interested reader to 
\cite{Ioffe2000,KlatteKummer2002,Mordukhovich2006,RockafellarWets1998}
for an overview of the theory and applications of metric regularity and the Aubin property.
Background information about metric subregularity and calmness can be found in
\cite{BenkoCervinkaHoheisel2019,FabianHenrionKrugerOutrata2010,Gfrerer2013,GfrererKlatte2016,
HenrionJouraniOutrata2002,HenrionOutrata2001,IoffeOutrata2008}.
We would like to mention that polyhedral set-valued mappings, i.e., set-valued mappings whose
graph can be represented as the union of finitely many convex polyhedral sets, are calm at each point
of their graphs, see \cite[Proposition~1]{Robinson1981}. Noting that the inverse of a polyhedral
set-valued mapping is also polyhedral, such set-valued mappings are also metrically subregular at
each point of their graphs.

We finalize this paragraph with the following observation: Whenever $\Upsilon$ possesses the Aubin property at $(\bar x,\bar y)\in\gph\Upsilon$, then we find $\kappa>0$ such that for each sequence
$\{x_k\}_{k\in\N}\subset\R^n$ with $x_k\to\bar x$, we have 
$\dist(\bar y,\Upsilon(x_k))\leq\kappa\norm{x_k-\bar x}$ for sufficiently large $k\in\N$ from
\eqref{eq:Aubin_property}. Particularly, there exists a sequence $\{y_k\}_{k\in\N}$ satisfying
$y_k\to\bar y$ and $y_k\in\Upsilon(x_k)$ for sufficiently large $k\in\N$. Thus, $\Upsilon$ is
so-called inner semicontinuous at $(\bar x,\bar y)$.
Let us also mention that $\Upsilon$ is called inner semicompact at $\bar x$ whenever for each
sequence $\{x_k\}_{k\in\N}\subset\R^n$ with $x_k\to\bar x$, 
there is a bounded sequence $\{y_k\}_{k\in\N}\subset\R^m$
such that $y_k\in\Upsilon(x_k)$ holds for all sufficiently large $k\in\N$.

\subsection{Variational analysis}\label{sec:variational_analysis}

The subsequently introduced tools of variational analysis can be found in
the monographs \cite{Mordukhovich2006,Mordukhovich2018} or \cite{RockafellarWets1998}.

For a closed set $A\subset\R^m$ and a point $\bar x\in A$, we exploit
\begin{align*}
	\mathcal T_A(\bar x)
	:=
	\limsup\limits_{t\searrow 0}\frac{A-\bar x}{t}\qquad
	\widehat{\mathcal N}_A(\bar x)
	:=
	\mathcal T_A(\bar x)^\circ\qquad
	\mathcal N_A(\bar x)
	:=
	\limsup\limits_{x\to\bar x,\,x\in A}\widehat{\mathcal N}_A(x)
\end{align*}
in order to denote the tangent (or Bouligand) cone as well as the regular (or Fr\'{e}chet) and the limiting (or Mordukhovich) normal
cone to $A$ at $\bar x$. 
For each $x\notin A$, we stipulate $\mathcal T_A(x):=\varnothing$, 
$\widehat{\mathcal N}_A(x):=\varnothing$,
and $\mathcal N_A(x):=\varnothing$. 
By definition of the limiting normal cone, it is robust
in the sense that we even have
\[
	\limsup\limits_{x\to\bar x}\mathcal N_A(x)
	=
	\mathcal N_A(\bar x),
\]
see \cite[Proposition~6.6]{RockafellarWets1998}.
We recall that whenever $A$ is convex, then the normal cones from above coincide 
with the standard normal cone of convex analysis, i.e.,
\[
	\widehat{\mathcal N}_A(\bar x)
	=
	\mathcal N_A(\bar x)
	=
	\{v\in\R^n\,|\,\forall x\in A\colon\,v^\top(x-\bar x)\leq 0\}
\]
holds true in this situation.

For some extended real-valued, lower semicontinuous function $\varphi\colon\R^n\to\overline{\R}$, 
we denote its epigraph by
$\epi\varphi:=\{(x,\alpha)\in\R^n\times\R\,|\,\alpha\geq\varphi(x)\}$.
Fixing $\bar x\in\R^n$ with $|\varphi(\bar x)|<\infty$, we introduce the 
limiting and singular subdifferential of $\varphi$ at $\bar x$, respectively, as
\begin{align*}
	\partial\varphi(\bar x)
	&:=
	\left\{v\in\R^n\,\middle|\,(v,-1)\in\mathcal N_{\epi\varphi}(\bar x,\varphi(\bar x))\right\}\\
	\partial^\infty\varphi(\bar x)
	&:=
	\left\{v\in\R^n\,\,\middle|\,(v,0)\in\mathcal N_{\epi\varphi}(\bar x,\varphi(\bar x))\right\}.
\end{align*}
It is well known that $\varphi$ is
locally Lipschitz continuous at $\bar x$ if and only if $\partial^\infty\varphi(\bar x)=\{0\}$
holds.

Next, for a set-valued mapping $\Upsilon\colon\R^n\tto\R^m$ with closed graph and some point
$(\bar x,\bar y)\in\gph\Upsilon$, we define the (limiting) coderivative 
$D^*\Upsilon(\bar x,\bar y)\colon\R^m\tto\R^n$ of $\Upsilon$ at $(\bar x,\bar y)$ as stated below:
\[
	\forall y^*\in\R^m\colon\quad
	D^*\Upsilon(\bar x,\bar y)(y^*)
	:=
	\left\{x^*\in\R^n\,\middle|\,(x^*,-y^*)\in\mathcal N_{\gph\Upsilon}(\bar x,\bar y)\right\}.
\]
In case where $\upsilon\colon \R^n\to\R^m$ is a single-valued mapping, we exploit
$D^*\upsilon(\bar x)(y^*):=D^*\upsilon(\bar x,\upsilon(\bar x))(y^*)$ for all $y^*\in\R^m$.
If $\upsilon$ is continuously differentiable at $\bar x$, 
$D^*\upsilon(\bar x)(y^*)=\{\upsilon'(\bar x)^\top y^*\}$ is valid for all $y^*\in\R^m$.

Using the concept of coderivatives, 
it is possible to characterize the presence of metric regularity or the
Aubin property for $\Upsilon$ at $(\bar x,\bar y)\in\gph\Upsilon$. 
More precisely, $\Upsilon$ possesses the Aubin property at $(\bar x,\bar y)$ if and only if
\[
	D^*\Upsilon(\bar x,\bar y)(0)=\{0\}
\]
holds, see \cite[Theorem~4.10]{Mordukhovich2006}. Noting that we have
\[
	\mathcal N_{\gph\Upsilon^{-1}}(\bar y,\bar x)
	=
	\left\{
		(y^*,x^*)\in\R^m\times\R^n\,\middle|\,(x^*,y^*)\in\mathcal N_{\gph\Upsilon}(\bar x,\bar y)
	\right\}
\]
from the change-or-coordinates formula of limiting normals, see \cite[Theorem~1.17]{Mordukhovich2006},
while $\Upsilon$ is metrically regular at $(\bar x,\bar y)$ if and only if $\Upsilon^{-1}$ possesses
the Aubin property at $(\bar y,\bar x)$, the above result also implies that $\Upsilon$ is
metrically regular at $(\bar x,\bar y)$ if and only if the condition
\[
	\ker D^*\Upsilon(\bar x,\bar y)=\{0\}
\]
holds.  This result can be distilled from \cite[Theorem~4.18]{Mordukhovich2006} as well.
Both criteria are referred to as \emph{Mordukhovich criterion} in the literature.

Below, we present a simple calculus rule for the coderivative of set-valued mappings
of certain product structure.
\begin{lemma}\label{lem:product_rule}
	Let $\Gamma\colon \R^n\tto \R^m$ be a set-valued mapping with closed graph.
	Furthermore, let $C\subset\R^n$ be a nonempty, closed set. 
	Let $\Psi\colon\R^n\tto \R^m\times\R^n$ be the set-valued mapping given by
	\[
		\forall x\in\R^n\colon\quad
		\Psi(x):=\begin{pmatrix}\Gamma(x)\\x-C\end{pmatrix}.
	\]
	For a fixed point $(\bar x,(\bar y,\bar z))\in\gph\Psi$, it holds
	\[
		\forall y^*\in\R^m\,\forall z^*\in\R^n\colon\quad
		D^*\Psi(\bar x,(\bar y,\bar z))(y^*,z^*)
		=
		\begin{cases}
			D^*\Gamma(\bar x,\bar y)(y^*)+z^*	&z^*\in\mathcal N_C(\bar x-\bar z)\\
			\varnothing							&\text{otherwise.}
		\end{cases}			
	\]
\end{lemma}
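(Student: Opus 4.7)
My plan is to realize $\gph\Psi$ as the preimage of a product set under a linear bijection, apply the change-of-coordinates formula for limiting normals, and then read off the coderivative from the definition.

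Concretely, I would define the linear isomorphism $T\colon\R^n\times(\R^m\times\R^n)\to\R^n\times\R^m\times\R^n$ by $T(x,(y,z)):=(x,y,x-z)$. Its inverse is $(x,y,u)\mapsto(x,(y,x-u))$, so $T$ is bijective. A direct check gives
\[
    \gph\Psi
    =
    \bigl\{(x,(y,z))\,\bigl|\,y\in\Gamma(x),\,x-z\in C\bigr\}
    =
    T^{-1}\bigl(\gph\Gamma\times C\bigr).
\]
Computing the adjoint from $\langle T^\top(a,b,c),(x,(y,z))\rangle=\langle(a,b,c),T(x,(y,z))\rangle$ yields $T^\top(a,b,c)=(a+c,(b,-c))$.

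Next, by the change-of-coordinates formula for limiting normal cones under a linear isomorphism (a direct consequence of \cite[Theorem~1.17]{Mordukhovich2006} applied to the smooth diffeomorphism $T$, as used in the excerpt to derive the formula for $\mathcal N_{\gph\Upsilon^{-1}}$), together with the product rule $\mathcal N_{A\times B}(a,b)=\mathcal N_A(a)\times\mathcal N_B(b)$ for limiting normals to a Cartesian product, I get
\[
    \mathcal N_{\gph\Psi}(\bar x,(\bar y,\bar z))
    =
    T^\top\Bigl(\mathcal N_{\gph\Gamma}(\bar x,\bar y)\times\mathcal N_C(\bar x-\bar z)\Bigr).
\]
Hence $(p,(q,r))\in\mathcal N_{\gph\Psi}(\bar x,(\bar y,\bar z))$ if and only if there exist $(a,b)\in\mathcal N_{\gph\Gamma}(\bar x,\bar y)$ and $c\in\mathcal N_C(\bar x-\bar z)$ with $p=a+c$, $q=b$, and $r=-c$.

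Finally, plugging $(p,q,r):=(x^*,-y^*,-z^*)$ into the definition $x^*\in D^*\Psi(\bar x,(\bar y,\bar z))(y^*,z^*)\Leftrightarrow(x^*,-y^*,-z^*)\in\mathcal N_{\gph\Psi}(\bar x,(\bar y,\bar z))$ forces $c=-r=z^*\in\mathcal N_C(\bar x-\bar z)$, $b=-y^*$, and $a=x^*-z^*$, so that the remaining condition $(a,b)\in\mathcal N_{\gph\Gamma}(\bar x,\bar y)$ becomes $x^*-z^*\in D^*\Gamma(\bar x,\bar y)(y^*)$. This is exactly the asserted formula, with the empty case corresponding to $z^*\notin\mathcal N_C(\bar x-\bar z)$.

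The only genuine subtlety is justifying the change-of-coordinates formula for limiting normals through the linear bijection $T$; everything else is bookkeeping. Since the paper already invokes the same tool to rewrite $\mathcal N_{\gph\Upsilon^{-1}}$ in terms of $\mathcal N_{\gph\Upsilon}$, no new machinery is needed, and the adjoint computation $T^\top(a,b,c)=(a+c,(b,-c))$ is elementary.
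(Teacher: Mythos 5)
Your proof is correct and follows essentially the same route as the paper: both realize $\gph\Psi$ as the preimage of $\gph\Gamma\times C$ under the linear isomorphism $(x,y,z)\mapsto(x,y,x-z)$ and then combine the change-of-coordinates formula from \cite[Theorem~1.17]{Mordukhovich2006} with the product rule for limiting normals. You merely spell out the adjoint computation and the final bookkeeping that the paper leaves as \enquote{elementary calculations}.
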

\begin{proof}
	Introducing a linear map $\psi\colon\R^n\times \R^m\times\R^n\to\R^n\times\R^m\times\R^n$ by
	$\psi(x,y,z):=(x,y,x-z)$ for all $x,z\in\R^n$ and $y\in\R^m$, we have
	\[
		\gph\Psi=\{(x,y,z)\,|\,\psi(x,y,z)\in\gph\Gamma\times C\}.
	\]
	Noting that the derivative of $\psi$ is a constant invertible matrix, the desired result follows
	by elementary calculations from the change-of-coordinates formula from
	\cite[Theorem~1.17]{Mordukhovich2006} and the product rule for the computation
	of limiting normals, see \cite[Proposition~1.2]{Mordukhovich2006}.
\end{proof}

\subsection{Generalized distance functions}

In our analysis, we will make use of the distance function to a moving set.
Therefore, let $\Gamma\colon \R^n\tto\R^m$ be a set-valued mapping with closed graph and consider
\[
	\forall x\in\R^n\,\forall y\in\R^m\colon\quad
	\rho_\Gamma(x,y):=\inf\limits_{z\in\Gamma(x)}\norm{y-z}.
\]
The function $\rho_\Gamma\colon\R^n\times\R^m\to\overline{\R}$
has been studied in several different publications,
see e.g.\ \cite{MordukhovichNam2005,Rockafellar1985,Thibault1991} and the
references therein.
In contrast to the classical distance function, see \cite[Section~2.4]{Clarke1983}, $\rho_\Gamma$ is not Lipschitz continuous in general.
In fact, it does not even need to be continuous. 
However, as we will show below, this function is lower semicontinuous since $\Gamma$ possesses
a closed graph.
\begin{lemma}\label{lem:lower_semicontinuity_of_generalized_distance_function}
	Let $\Gamma\colon \R^n\tto\R^m$ be a set-valued mapping with closed graph.
	Then the associated function $\rho_\Gamma$ is lower semicontinuous.
\end{lemma}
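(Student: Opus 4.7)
The plan is to argue directly from the definition of sequential lower semicontinuity, using the closed-graph hypothesis to pass a selected sequence $z_k \in \Gamma(x_k)$ to a limit in $\Gamma(\bar x)$.

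First, I would fix arbitrary $(\bar x, \bar y) \in \R^n \times \R^m$ and a sequence $(x_k, y_k) \to (\bar x, \bar y)$, and set $\alpha := \liminf_{k \to \infty} \rho_\Gamma(x_k, y_k)$. The goal is to show $\rho_\Gamma(\bar x, \bar y) \leq \alpha$. If $\alpha = +\infty$ there is nothing to prove, so I would assume $\alpha < +\infty$ and, after passing to a subsequence (without relabeling), assume $\rho_\Gamma(x_k, y_k) \to \alpha$ with $\rho_\Gamma(x_k, y_k) < +\infty$ for every $k$. The latter means $\Gamma(x_k) \neq \varnothing$, and because $\Gamma(x_k)$ is closed (as a section of the closed graph) and the norm is coercive, the infimum in the definition of $\rho_\Gamma(x_k, y_k)$ is attained; otherwise, one can just choose $z_k \in \Gamma(x_k)$ with $\|y_k - z_k\| \leq \rho_\Gamma(x_k, y_k) + 1/k$.

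Next, I would observe that the sequence $\{z_k\}_{k \in \N}$ is bounded. Indeed, $\|z_k\| \leq \|y_k\| + \|y_k - z_k\|$, and both terms on the right are bounded because $y_k \to \bar y$ and $\|y_k - z_k\| \to \alpha$. Passing to a further subsequence, $z_k \to \bar z$ for some $\bar z \in \R^m$. Since $(x_k, z_k) \in \gph \Gamma$ for each $k$ and $\gph \Gamma$ is closed, the limit $(\bar x, \bar z)$ lies in $\gph \Gamma$, i.e., $\bar z \in \Gamma(\bar x)$. Consequently,
\[
    \rho_\Gamma(\bar x, \bar y) \;\leq\; \|\bar y - \bar z\| \;=\; \lim_{k \to \infty} \|y_k - z_k\| \;=\; \alpha,
\]
which is precisely lower semicontinuity at $(\bar x, \bar y)$.

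There is no real obstacle here; the only subtle point is handling the case $\Gamma(\bar x) = \varnothing$, where $\rho_\Gamma(\bar x, \bar y) = +\infty$ by the convention that the infimum over the empty set is $+\infty$. Then the boundedness-plus-closed-graph argument above forces $\liminf_k \rho_\Gamma(x_k, y_k) = +\infty$ automatically, because any bounded subsequence of values would produce a selection $z_k \to \bar z \in \Gamma(\bar x)$, contradicting emptiness. So the same argument covers this degenerate case, and no additional hypothesis beyond closedness of $\gph \Gamma$ is required.
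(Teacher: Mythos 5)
Your proof is correct and follows essentially the same route as the paper: select (near-)projections $z_k\in\Gamma(x_k)$, observe that $\{z_k\}_{k\in\N}$ is bounded, extract an accumulation point, and invoke closedness of $\gph\Gamma$ to conclude $\rho_\Gamma(\bar x,\bar y)\leq\alpha$. The only difference is cosmetic — you argue directly via the $\liminf$ while the paper phrases the same argument as a proof by contradiction — and your explicit treatment of the case $\Gamma(\bar x)=\varnothing$ is a welcome, if minor, addition.
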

\begin{proof}
	Suppose that there exists a point $(\bar x,\bar y)\in\R^n\times \R^m$ where $\Gamma$ is not 
	lower semicontinuous. 
	Then we find sequences $\{x_k\}_{k\in\N}\subset\R^n$ and $\{y_k\}_{k\in\N}\subset\R^m$
	as well as $\alpha\geq 0$ with $x_k\to\bar x$, $y_k\to\bar y$, and 
	$\rho_\Gamma(x_k,y_k)\to\alpha<\rho_\Gamma(\bar x,\bar y)$.
	Particularly, we can assume w.l.o.g.\ that $\Gamma(x_k)\neq\varnothing$ holds for all $k\in\N$.
	Noting that $\Gamma(x_k)$ is closed for each $k\in\N$, we find points $z_k\in\Pi(y_k,\Gamma(x_k))$.
	This yields $\rho_\Gamma(x_k,y_k)=\norm{y_k-z_k}$ for all $k\in\N$. Due to
	\[
		\norm{z_k}\leq\norm{z_k-y_k}+\norm{y_k}=\rho_\Gamma(x_k,y_k)+\norm{y_k}
	\]
	and the boundedness of $\{\rho_\Gamma(x_k,y_k)\}_{k\in\N}$ and $\{y_k\}_{k\in\N}$,
	$\{z_k\}_{k\in\N}$ is bounded as well and possesses an accumulation point $\bar z$.
	Due to $\rho_\Gamma(x_k,y_k)\to\alpha$, we have $\alpha=\norm{\bar y-\bar z}$.
	Observing that $z_k\in\Gamma(x_k)$ holds true for all $k\in\N$, the closedness of $\gph\Gamma$
	yields $\bar z\in\Gamma(\bar x)$. Thus, we have $\rho_\Gamma(\bar x,\bar y)\leq\norm{\bar y-\bar z}=\alpha$
	which is a contradiction.
\end{proof}

Now, we want to identify situations where $\rho_\Gamma$ is a locally Lipschitz continuous function.
Furthermore, we aim for an upper estimate of the limiting subdifferential of this function
which holds at in-set points from $\gph\Gamma$ but also at out-of-set points.
\begin{lemma}\label{lem:generalized_distance_function}
	Let $\Gamma\colon\R^n\tto\R^m$ be a set-valued mapping with closed graph and fix 
	a point $(\bar x,\bar y)\in\R^n\times \R^m$ such that $\bar x\in\dom\Gamma$.
	Then the following assertions hold.
	\begin{enumerate}
		\item[(a)] Assume that $\Gamma$ possesses the Aubin property at all points $(\bar x,y)$ 
			satisfying $y\in\Pi(\bar y,\Gamma(\bar x))$.
			Then $\rho_\Gamma$ is locally Lipschitz continuous at $(\bar x,\bar y)$.
		\item[(b)] The following upper estimate for the limiting subdifferential does always hold:
			\[
				\partial\rho_\Gamma(\bar x,\bar y)\subset\bigcup\limits_{y\in\Pi(\bar y,\Gamma(\bar x))}\mathcal N_{\gph\Gamma}(\bar x,y).
			\]
	\end{enumerate}
\end{lemma}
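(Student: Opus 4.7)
My plan: Lipschitz continuity in the second argument is immediate from the triangle inequality (with constant $1$, uniformly over any $x$ with $\Gamma(x) \neq \varnothing$). For the first argument, I would first establish continuity of $\rho_\Gamma$ at $(\bar x, \bar y)$ by combining the lower semicontinuity from \cref{lem:lower_semicontinuity_of_generalized_distance_function} with upper semicontinuity, the latter obtained by picking any $\bar z \in \Pi(\bar y, \Gamma(\bar x))$, invoking the Aubin property at $(\bar x, \bar z)$ to produce $z_k \in \Gamma(x_k)$ with $\|z_k - \bar z\| \le \kappa\|x_k - \bar x\|$, and noting that $\rho_\Gamma(x_k, y_k) \le \|y_k - z_k\|$. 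With continuity in hand, I would argue that for $(x', y')$ sufficiently close to $(\bar x, \bar y)$, every projection $z' \in \Pi(y', \Gamma(x'))$ lies in any prescribed neighborhood of the nonempty compact set $P := \Pi(\bar y, \Gamma(\bar x))$; otherwise, an accumulation argument using the closed graph of $\Gamma$ would produce a point of $\Gamma(\bar x)$ at distance $\rho_\Gamma(\bar x, \bar y)$ from $\bar y$ yet outside $P$, a contradiction. Covering $P$ by finitely many neighborhoods extracted from the Aubin hypotheses yields a uniform modulus $\kappa$, so that any such $z'$ may be matched with some $z \in \Gamma(x)$ satisfying $\|z - z'\| \le \kappa\|x - x'\|$. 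This leads to
\[
    \rho_\Gamma(x, y) \;\le\; \|y - z\| \;\le\; \|y - y'\| + \rho_\Gamma(x', y') + \kappa\|x - x'\|,
\]
and symmetry in $(x, y)$ and $(x', y')$ produces the desired Lipschitz estimate.

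\textbf{Part (b).} I would recast $\rho_\Gamma$ as a marginal function,
\[
    \rho_\Gamma(x, y) \;=\; \inf_{z \in \R^m}\bigl[\|y - z\| + \delta_{\gph\Gamma}(x, z)\bigr],
\]
whose solution map is $(x, y) \mapsto \Pi(y, \Gamma(x))$ and which is inner semicompact at $(\bar x, \bar y)$ by the same boundedness-of-minimizing-sequences argument used in the proof of \cref{lem:lower_semicontinuity_of_generalized_distance_function}. Applying the standard limiting-subdifferential rule for marginal functions, see e.g.\ \cite[Theorem~3.38]{Mordukhovich2006}, yields some $\bar z \in \Pi(\bar y, \Gamma(\bar x))$ with $(x^*, y^*, 0) \in \partial\bigl[\|y - z\| + \delta_{\gph\Gamma}(x, z)\bigr](\bar x, \bar y, \bar z)$. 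Since $(y, z) \mapsto \|y - z\|$ is globally Lipschitz, the subdifferential sum rule applies without any further qualification, and the chain rule for $\|\cdot\|$ composed with the linear map $(y, z) \mapsto y - z$ gives $\partial\|y - z\|(\bar y, \bar z) \subset \bigl\{(w, -w) : \|w\| \le 1\bigr\}$. Matching coordinates in the resulting decomposition forces the $z$-component of the $\mathcal N_{\gph\Gamma}$-part to equal $y^*$, so that $(x^*, y^*) \in \mathcal N_{\gph\Gamma}(\bar x, \bar z)$, which is exactly the claimed inclusion.

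\textbf{Main obstacle.} The principal difficulty lies in part (a): one must control, in a single Lipschitz estimate, projections of perturbed $y'$ onto perturbed sets $\Gamma(x')$ despite only having local (pointwise) Aubin information along $P$. The compactness of $P$, combined with the hypothesis that the Aubin property holds at \emph{every} point of $P$, is precisely what allows a uniform Lipschitz constant to be extracted via a finite-covering argument. Part (b), by contrast, is essentially a bookkeeping exercise once the marginal-function formula is in place and the inner semicompactness is observed.
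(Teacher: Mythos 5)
Your proof is correct, but it takes a genuinely different and more self-contained route than the paper's. The paper splits both parts according to whether $(\bar x,\bar y)$ lies in $\gph\Gamma$ or not and outsources essentially everything: for (a) it invokes \cite[Theorem~2.3]{Rockafellar1985} in the on-graph case and, off the graph, the upper estimate for the \emph{singular} subdifferential $\partial^\infty\rho_\Gamma$ from \cite{MordukhovichNam2005} together with the Mordukhovich criterion and the fact that a lower semicontinuous function with trivial singular subdifferential is locally Lipschitz; for (b) it cites \cite[Proposition~2.7]{Thibault1991} on the graph and \cite{MordukhovichNam2005} off it. Your part (a) replaces all of this by a direct metric argument — continuity first, then localization of the perturbed projection sets near the compact set $P=\Pi(\bar y,\Gamma(\bar x))$, then a finite subcover of $P$ by Aubin neighborhoods yielding a uniform modulus — which treats both cases at once, avoids coderivatives and singular subdifferentials entirely, and produces an explicit Lipschitz constant; this is arguably more transparent, at the price of being longer. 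Your part (b) re-derives the Mordukhovich--Nam estimate from the marginal-function rule and likewise handles both cases uniformly. The one step that needs more care is your inner-semicompactness claim: for an \emph{arbitrary} sequence $(x_k,y_k)\to(\bar x,\bar y)$ the values $\rho_\Gamma(x_k,y_k)$ need not stay bounded (consider $\Gamma(x):=\{1/x\}$ for $x>0$, $\Gamma(0):=\{0\}$, $\Gamma(x):=\varnothing$ for $x<0$, and $(\bar x,\bar y):=(0,0)$), so the solution map $(x,y)\tto\Pi(y,\Gamma(x))$ is \emph{not} inner semicompact in the unrestricted sense; the boundedness argument from \cref{lem:lower_semicontinuity_of_generalized_distance_function} only works along sequences with $\rho_\Gamma(x_k,y_k)\to\rho_\Gamma(\bar x,\bar y)$. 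Since the limiting subdifferential is built from exactly such value-converging sequences, the variant of the marginal-function rule that requires inner semicompactness only relative to the marginal value (which is the form used in \cite{MordukhovichNam2005}) applies and your argument goes through, but you should invoke that refined version explicitly rather than claiming plain inner semicompactness.
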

\begin{proof}
	\begin{enumerate}
		\item[(a)] First, assume that $(\bar x,\bar y)\in\gph\Gamma$ holds. 
			Then we clearly have $\Pi(\bar y,\Gamma(\bar x))=\{\bar y\}$.
			Due to the assumptions of the lemma, $\Gamma$ possesses the Aubin property at $(\bar x,\bar y)$.
			Thus, we can invoke \cite[Theorem~2.3]{Rockafellar1985} in order to obtain the Lipschitz continuity of
			$\rho_\Gamma$ at $(\bar x,\bar y)$.\\
			Next, we assume that $(\bar x,\bar y)\notin\gph\Gamma$ holds. 
			In this case, \cite[Theorem~4.9, Corollary~4.10]{MordukhovichNam2005} guarantee validity of the estimate
			\[
				\partial^\infty\rho_\Gamma(\bar x,\bar y)
				\subset
				\bigcup\limits_{y\in\Pi(\bar y,\Gamma(\bar x))}\{(\xi,0)\,|\,\xi\in D^*\Gamma(\bar x,y)(0)\}.
			\]
			Noting that $\Gamma$ possesses the Aubin property at all points $(\bar x,y)$ with $y\in\Pi(\bar y,\Gamma(\bar x))$,
			the Mordukhovich criterion ensures $D^*\Gamma(\bar x,y)(0)=\{0\}$ which is why we obtain
			$\partial^\infty\rho_\Gamma(\bar x,\bar y)=\{(0,0)\}$ from the above formula.
			Due to \cref{lem:lower_semicontinuity_of_generalized_distance_function}, we already know that $\rho_\Gamma$ is
			lower semicontinuous. Combining these two properties, we obtain that
			$\rho_\Gamma$ is locally Lipschitz continuous at $(\bar x,\bar y)$.
		\item[(b)] If we have $(\bar x,\bar y)\in\gph \Gamma$, then \cite[Proposition~2.7]{Thibault1991} guarantees
			\[
				\mathcal N_{\gph\Gamma}(\bar x,\bar y)=\bigcup_{\alpha\geq 0}\alpha\partial \rho_\Gamma(\bar x,\bar y).
			\]
			On the other hand, in case $(\bar x,\bar y)\notin\gph\Gamma$, \cite[Theorem~4.9, Corollary~4.10]{MordukhovichNam2005}
			can be applied in order to find the estimate
			\[
				\partial\rho_\Gamma(\bar x,\bar y)
				\subset
				\bigcup\limits_{y\in\Pi(\bar y,\Gamma(\bar x))}
				\left\{(\xi,\upsilon)\in\mathcal N_{\gph\Gamma}(\bar x,y)\,\middle|\,\norm{\upsilon}=1\right\}.
			\]
			Taking both formulas together, we obtain the desired general estimate.
	\end{enumerate}
\end{proof}

\section{Asymptotic M-stationarity conditions and asymptotic regularity}\label{sec:asymptotic_concepts}

\subsection{Asymptotic M-stationary conditions}\label{sec:asymptotic_stationarity}

Let $\bar x\in M$ be a local minimizer of \eqref{eq:basic_problem}.
Under suitable assumptions, so-called constraint qualifications, 
one can guarantee that this ensures the existence
of a multiplier $\lambda\in\R^m$ such that
\begin{equation}\label{eq:M_Stationarity}
	0\in\partial f(\bar x)+D^*\Phi(\bar x,0)(\lambda)
\end{equation}
holds, see e.g.\ \cite[Theorem~5.48]{Mordukhovich2006}. 
We will refer to this condition as the Mordukhovich-stationarity
condition (M-stationarity condition for short) of \eqref{eq:basic_problem}.
Now, the question arises whether it is possible to find a milder condition
which holds for each local minimizer of \eqref{eq:basic_problem} even in the
absence of a constraint qualification. A potential candidate for such a
condition could be an \emph{asymptotic} version of M-stationarity which holds
along a sequence of points $\{x_k\}_{k\in\N}$ converging to the local minimizer
of interest. However, one has to specify what \emph{asymptotic} means in 
this regard. The following definition provides a potential and, as we will
see later, reasonable answer to this question.
\begin{definition}\label{def:Asymptotic_M_stationary_point}
	Let $\bar x\in M$ be a feasible point of \eqref{eq:basic_problem}.
	Then we call $\bar x$ an
	\emph{asymptotically Mordukhovich-stationary point} (AM-stationary point) of \eqref{eq:basic_problem} 
	whenever there exist sequences $\{x_k\}_{k\in\N},\{\varepsilon_k\}_{k\in\N}\subset \R^n$ as well as
	$\{y_k\}_{k\in\N},\{\lambda_k\}_{k\in\N}\subset\R^m$ such that
	\begin{equation}\label{eq:AMS_points}
		\forall k\in\N\colon\quad
		\varepsilon_k\in\partial f(x_k)+D^*\Phi(x_k,y_k)(\lambda_k)
	\end{equation}
	as well as $x_k\to\bar x$, $\varepsilon_k\to 0$, and $y_k\to 0$ hold.
	This implicitly requires $\{(x_k,y_k)\}_{k\in\N}\subset\gph\Phi$.
\end{definition}
Observe that in the above definition, no convergence of the multiplier sequence $\{\lambda_k\}_{k\in\N}$
is postulated. Indeed, if it would be bounded, then one could simply take the limit $k\to\infty$
along a subsequence in \eqref{eq:AMS_points}
in order to recover the M-stationarity condition from \eqref{eq:M_Stationarity},
see \cref{lem:relation_to_FJM_stationarity} below.

Using a simple penalization argument, we obtain the following result which shows that each
local minimizer of \eqref{eq:basic_problem} is an AM-stationary point without
any additional assumptions.
\begin{theorem}\label{thm:local_minimizers_AMS_points}
	Let $\bar x\in M$ be a local minimizer of \eqref{eq:basic_problem}.
	Then $\bar x$ is an AM-stationary point of \eqref{eq:basic_problem}.
\end{theorem}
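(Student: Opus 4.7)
The plan is to prove the theorem via a Tikhonov-type penalization of the constraint $0\in\Phi(x)$, working on the graph of $\Phi$ rather than on $\R^n$ alone. Fix $\delta>0$ so small that $f(\bar x)\le f(x)$ for every $x\in M\cap\mathbb B_\delta(\bar x)$. For each $k\in\N$, I would consider the auxiliary problem
\[
	\min_{(x,y)}\;g_k(x,y):=f(x)+\tfrac{k}{2}\norm{y}^2+\tfrac{1}{2}\norm{x-\bar x}^2
	\quad\text{subject to}\quad (x,y)\in\gph\Phi\cap\bigl(\mathbb B_\delta(\bar x)\times\mathbb B_\delta(0)\bigr).
\]
The feasible set is nonempty (it contains $(\bar x,0)$), closed (since $\gph\Phi$ is closed), and bounded, hence compact; since $g_k$ is continuous, a minimizer $(x_k,y_k)$ exists.

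Next I would verify $(x_k,y_k)\to(\bar x,0)$. Comparing with the competitor $(\bar x,0)$ gives $g_k(x_k,y_k)\le f(\bar x)$, so $\tfrac{k}{2}\norm{y_k}^2$ stays bounded and therefore $y_k\to 0$. Extracting a convergent subsequence of $\{x_k\}$ with limit $x^*\in\mathbb B_\delta(\bar x)$, closedness of $\gph\Phi$ forces $x^*\in M$; passing to the limit in the inequality $f(x_k)+\tfrac{1}{2}\norm{x_k-\bar x}^2\le f(\bar x)$ and using local optimality of $\bar x$ on $M\cap\mathbb B_\delta(\bar x)$ yields $x^*=\bar x$, so the whole sequence converges. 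In particular, for large $k$ the point $(x_k,y_k)$ lies in the topological interior of the ball constraints, so those can be dropped.

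Then I would apply the generalized Fermat rule to $g_k+\iota_{\gph\Phi}$ at $(x_k,y_k)$, combined with the subdifferential sum rule (valid because $g_k$ is locally Lipschitz and $\iota_{\gph\Phi}$ is lower semicontinuous) and the smoothness of the quadratic and coupling terms in $g_k$. This produces $\xi_k\in\partial f(x_k)$ with
\[
	\bigl(-\xi_k-(x_k-\bar x),\,-k y_k\bigr)\in\mathcal N_{\gph\Phi}(x_k,y_k).
\]
Setting $\lambda_k:=k y_k$, the definition of the limiting coderivative rewrites this as $-\xi_k-(x_k-\bar x)\in D^*\Phi(x_k,y_k)(\lambda_k)$, so that $\varepsilon_k:=\bar x-x_k\in\partial f(x_k)+D^*\Phi(x_k,y_k)(\lambda_k)$. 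Since $x_k\to\bar x$ we have $\varepsilon_k\to 0$, and we already have $y_k\to 0$, matching the definition of AM-stationarity exactly.

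The only delicate step is the subdifferential sum rule for $g_k+\iota_{\gph\Phi}$; it is standard in the finite-dimensional limiting calculus since $g_k$ is locally Lipschitz (so its singular subdifferential is trivial), but it is the one place where a calculus result is invoked rather than a hands-on estimate. Everything else is a routine compactness and continuity argument together with the bookkeeping that turns $k y_k$ into the multiplier $\lambda_k$, for which no boundedness is claimed, in agreement with the remark following \cref{def:Asymptotic_M_stationary_point}.
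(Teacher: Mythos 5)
Your proposal is correct and follows essentially the same route as the paper's proof: the identical quadratic penalization of $y$ and proximal term in $x$ on a compact slice of $\gph\Phi$, the same competitor argument forcing $(x_k,y_k)\to(\bar x,0)$, and the same application of the generalized Fermat rule with the subdifferential sum rule to produce $\lambda_k:=ky_k$ and $\varepsilon_k:=\bar x-x_k$. No substantive differences to report.
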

\begin{proof}
	Let $\varepsilon>0$ be chosen such that $f(x)\geq f(\bar x)$ holds for
	all $x\in M\cap\mathbb B_\varepsilon(\bar x)$.
	Consider the 
	optimization problem
	\begin{equation}\label{eq:penalized_basic_problem}\tag{P$(k)$}
	\begin{aligned}
		f(x)+\frac{k}{2}\norm{y}^2+\frac12\norm{x-\bar x}^2&\to\,\min\limits_{x,y}\\
		(x,y)&\,\in\,\gph\Phi\cap(\mathbb B_\varepsilon(\bar x)\times\mathbb B)
	\end{aligned}
	\end{equation}
	which depends on the parameter $k\in\N$. 
	Observe that the objective function of this optimization problem is locally Lipschitz
	continuous while its feasible set is nonempty and compact.
	Consequently, \eqref{eq:penalized_basic_problem} possesses a global minimizer 
	$(x_k,y_k)\in\R^n\times \R^m$
	for each $k\in\N$. 
	Due to $\{(x_k,y_k)\}_{k\in\N}\subset \mathbb B_\varepsilon(\bar x)\times\mathbb B$, 
	this sequence is bounded.
	Choosing a subsequence (if necessary) without relabeling, we can guarantee $x_k\to\tilde x$ for some 
	$\tilde x\in \mathbb B_\varepsilon(\bar x)$ and $y_k\to\tilde y$ for some $\tilde y\in\mathbb B$.	
	Noting that $(\bar x,0)\in\gph\Phi$ is feasible to \eqref{eq:penalized_basic_problem},
	we find
	\begin{equation}\label{eq:penalty_estimate}
		\forall k\in\N\colon\quad 
		f(x_k)+\frac{k}{2}\norm{y_k}^2+\frac12\norm{x_k-\bar x}^2
		\leq
		f(\bar x).
	\end{equation}
	By boundedness of $\{f(x_k)\}_{k\in\N}$, there is a constant $c\in\R$ such that
	$\norm{y_k}^2\leq 2(f(\bar x)-c)/k$ holds for all $k\in\N$.
	Consequently, $\{y_k\}_{k\in\N}$ converges to $0$ as $k\to\infty$, i.e., we have
	$\tilde y=0$.
	The closedness of $\gph\Phi$ now yields $(\tilde x,0)\in\gph\Phi$. 	
	Particularly, we infer $\tilde x\in M\cap\mathbb B_\varepsilon(\bar x)$.
	Now, \eqref{eq:penalty_estimate} and the continuity of all appearing functions yield
	\begin{align*}
		f(\tilde x)+\frac12\norm{\tilde x-\bar x}^2
		&=
		\lim\limits_{k\to\infty}\left(f(x_k)+\frac12\norm{x_k-\bar x}^2\right)\\
		&\leq
		\lim\limits_{k\to\infty}
			\left(
				f(x_k)+\frac{k}{2}\norm{y_k}^2+\frac12\norm{x_k-\bar x}^2
			\right)
		\leq 
		f(\bar x)
		\leq
		f(\tilde x),
	\end{align*}
	and this implies $\tilde x=\bar x$. Particularly, we have $x_k\to\bar x$.
	
	Noting that $(x_k,y_k)$ lies in the interior of $\mathbb B_\varepsilon(\bar x)\times\mathbb B$
	for sufficiently large $k\in\N$, we can apply \cite[Proposition~5.3]{Mordukhovich2006}
	and the subdifferential sum rule from \cite[Theorem~3.36]{Mordukhovich2006} in order to obtain
	\[
		(0,0)\in\partial f(x_k)\times\{0\}+\{(x_k-\bar x,ky_k)\}+\mathcal N_{\gph\Phi}(x_k,y_k)
	\]
	for large enough $k\in\N$. Setting $\lambda_k:=ky_k$ and $\varepsilon_k:=\bar x-x_k$ for
	any such $k\in\N$, we have
	\[
		\varepsilon_k\in\partial f(x_k)+D^*\Phi(x_k,y_k)(\lambda_k),
	\]
	and due to $\varepsilon_k\to 0$, $x_k\to\bar x$, and $y_k\to 0$, $\bar x$ is an
	AM-stationary point of \eqref{eq:basic_problem}.
\end{proof}

Note that in case where the objective function $f$ is differentiable, one could
exploit \cite[Proposition~5.1]{Mordukhovich2006} in the above proof. This way, it would
be possible to replace the limiting coderivative by the regular one (i.e., one replaces the
limiting normal cone to $\gph\Phi$ by the regular normal cone to this set in the definition
of the coderivative) leading to a slightly
stronger concept of asymptotic stationarity. We are, however, interested in taking the
limit $k\to\infty$ in \eqref{eq:AMS_points}, and by definition of the limiting normal
cone and its robustness, it does not matter which of these coderivative constructions is used
in the definition of AM-stationarity since after taking the limit, we obtain a condition
in terms of the limiting coderivative either way.

The above theorem states that in contrast to M-stationarity, AM-stationarity always 
provides a necessary optimality condition for optimization problems of type
\eqref{eq:basic_problem}. The subsequently stated example visualizes this issue.
\begin{example}\label{ex:AMS_point_which_is_no_MSt_point}
	Consider the setting 	
	\begin{align*}
		\forall x\in\R\colon\quad
		f(x):=x\qquad \Phi(x):=[x^2,\infty).
	\end{align*}
	The uniquely determined feasible point $\bar x:=0$ must be the
	global minimizer of the associated program
	\eqref{eq:basic_problem}.
	Exploiting
	\[
		D^*\Phi(x,y)(\lambda)
		=
		\begin{cases}
			\{2\lambda x\}			&y=x^2,\,\lambda\geq 0\\
			\{0\}					&y>x^2,\,\lambda=0\\
			\varnothing				&\text{otherwise}
		\end{cases}
	\]
	for all $x,y,\lambda\in\R$,
	one can easily check that $\bar x$ is not an M-stationary point of this
	program. However, we can set
	\[
		x_k:=-\tfrac1k
		\qquad
		\varepsilon_k:=0
		\qquad
		y_k:=\tfrac1{k^2}
		\qquad
		\lambda_k:=\tfrac{k}{2}
	\]
	for all $k\in\N$
	in order to see that $\bar x$ is an AM-stationary point of the given optimization
	problem. 
	Observe that the multiplier sequence $\{\lambda_k\}_{k\in\N}$ from above is
	not bounded.
\end{example}

It has been shown in \cite[Theorem~8]{Gfrerer2013} that whenever $\bar x\in\R^n$ is
a local minimizer of \eqref{eq:basic_problem}, then, without any additional assumptions,
there exist multipliers $(\lambda_0,\lambda)\in\R\times\R^m$ which satisfy
\begin{equation}\label{eq:FJM_stationarity}
	0\in\lambda_0\partial f(\bar x)+D^*\Phi(\bar x,0)(\lambda)\qquad
	\lambda_0\geq 0\qquad
	\lambda_0+\norm{\lambda}>0.
\end{equation}
Due to the appearance of the leading multiplier $\lambda_0$, one might be tempted to
call $\bar x$ a Fritz--John--Mordukhovich-stationary point (FJM-stationary point)
of \eqref{eq:basic_problem} in this case. 
Observe that whenever we have $\lambda_0>0$ in \eqref{eq:FJM_stationarity}, then $\bar x$
is already M-stationary by positive homogeneity of the coderivative.
\begin{lemma}\label{lem:relation_to_FJM_stationarity}
	Let $\bar x\in\R^n$ be an AM-stationary point of \eqref{eq:basic_problem} such that
	the sequences $\{x_k\}_{k\in\N},\{\varepsilon_k\}_{k\in\N}\subset\R^n$ as well as
	$\{y_k\}_{k\in\N},\{\lambda_k\}_{k\in\N}\subset\R^m$ satisfy \eqref{eq:AMS_points},
	$x_k\to\bar x$, $\varepsilon_k\to 0$, and $y_k\to 0$.
	Then the following assertions hold.
	\begin{enumerate}
		\item[(a)] If $\{\lambda_k\}_{k\in\N}$ is bounded, then $\bar x$ is an 
			M-stationary point of \eqref{eq:basic_problem}.
		\item[(b)] If $\{\lambda_k\}_{k\in\N}$ is not bounded, then we find $\lambda\in\R^m$
			with $\lambda\neq 0$ and $0\in D^*\Phi(\bar x,0)(\lambda)$.
	\end{enumerate}	 
	Particularly, $\bar x$ is an FJM-stationary point of \eqref{eq:basic_problem}.
\end{lemma}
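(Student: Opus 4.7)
The plan is to rewrite the AM-stationarity inclusion in normal-cone form and pass to the limit. For each $k\in\N$, choose $\eta_k\in\partial f(x_k)$ with $\xi_k:=\varepsilon_k-\eta_k\in D^*\Phi(x_k,y_k)(\lambda_k)$, so that $(\xi_k,-\lambda_k)\in\mathcal N_{\gph\Phi}(x_k,y_k)$ holds for every $k\in\N$. Since $f$ is locally Lipschitz around $\bar x$, the limiting subdifferential $\partial f$ is locally bounded and outer semicontinuous at $\bar x$; in particular $\{\eta_k\}$ is bounded. Both parts will follow by combining these facts with the robustness $\limsup_{(x,y)\to(\bar x,0)}\mathcal N_{\gph\Phi}(x,y)=\mathcal N_{\gph\Phi}(\bar x,0)$ recorded after \cite[Proposition~6.6]{RockafellarWets1998}.

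For part~(a), extract a subsequence along which $\eta_k\to\eta\in\partial f(\bar x)$ (using outer semicontinuity of $\partial f$) and $\lambda_k\to\lambda$ (using the assumed boundedness). Then $\xi_k=\varepsilon_k-\eta_k\to -\eta$, and since $(x_k,y_k)\to(\bar x,0)$, robustness of $\mathcal N_{\gph\Phi}$ yields $(-\eta,-\lambda)\in\mathcal N_{\gph\Phi}(\bar x,0)$. This means $-\eta\in D^*\Phi(\bar x,0)(\lambda)$, hence $0\in\partial f(\bar x)+D^*\Phi(\bar x,0)(\lambda)$, which is precisely \eqref{eq:M_Stationarity}.

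For part~(b), assume $\norm{\lambda_k}\to\infty$ along a subsequence and normalize by setting $\tilde\lambda_k:=\lambda_k/\norm{\lambda_k}$ and $\tilde\xi_k:=\xi_k/\norm{\lambda_k}$. Because $\mathcal N_{\gph\Phi}(x_k,y_k)$ is a cone, $(\tilde\xi_k,-\tilde\lambda_k)\in\mathcal N_{\gph\Phi}(x_k,y_k)$ remains valid. The boundedness of $\{\xi_k\}$, which follows from $\varepsilon_k\to 0$ and the boundedness of $\{\eta_k\}$, forces $\tilde\xi_k\to 0$, while compactness of the unit sphere lets us assume $\tilde\lambda_k\to\lambda$ with $\norm{\lambda}=1$, hence $\lambda\neq 0$. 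Robustness then delivers $(0,-\lambda)\in\mathcal N_{\gph\Phi}(\bar x,0)$, i.e.\ $0\in D^*\Phi(\bar x,0)(\lambda)$. The concluding FJM-stationarity claim is immediate: in case~(a) take $(\lambda_0,\lambda):=(1,\lambda)$, in case~(b) take $(\lambda_0,\lambda):=(0,\lambda)$, so in either situation $\lambda_0\geq 0$ and $\lambda_0+\norm{\lambda}>0$. The only mild care needed concerns bookkeeping the nested subsequences and citing the correct robustness/outer-semicontinuity properties of $\mathcal N_{\gph\Phi}$ and $\partial f$; I do not anticipate a genuine obstacle.
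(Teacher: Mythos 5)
Your proof is correct and follows essentially the same route as the paper's: identify a bounded sequence of subgradients, pass to the limit using robustness of the limiting normal cone to $\gph\Phi$ (and of $\partial f$) in case (a), and normalize by $\norm{\lambda_k}$ using positive homogeneity of the coderivative in case (b). The only cosmetic difference is that you phrase the argument directly in terms of elements $(\xi_k,-\lambda_k)\in\mathcal N_{\gph\Phi}(x_k,y_k)$ rather than coderivative inclusions, which changes nothing of substance.
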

\begin{proof}
	First, we show the statements (a) and (b) separately.
	\begin{enumerate}
	\item[(a)] Due to \eqref{eq:AMS_points}, we find a sequence
	$\{x_k^*\}_{k\in\N}\subset\R^n$ such that 
	\begin{equation}\label{eq:from_AM_to_M_stationarity}
		\forall k\in\N\colon\quad
		\varepsilon_k-x_k^*\in D^*\Phi(x_k,y_k)(\lambda_k)
		\qquad
		x_k^*\in\partial f(x_k)
	\end{equation}
	holds. Noting that the set $\partial f(x)$
	is uniformly bounded in a neighborhood of $\bar x$ by local Lipschitz continuity of
	$f$, see \cite[Corollary~1.81]{Mordukhovich2006}, the sequence $\{x_k^*\}_{k\in\N}$
	is bounded. Let us assume w.l.o.g.\ that there are $x^*\in\R^n$ and $\bar\lambda\in\R^m$
	such that the convergences $x_k^*\to x^*$ and $\lambda_k\to\bar\lambda$ hold. 
	Exploiting the robustness of the limiting normal cone, we find
	$-x^*\in D^*\Phi(\bar x,0)(\bar\lambda)$ and $x^*\in\partial f(\bar x)$ by taking the
	limit $k\to\infty$ in \eqref{eq:from_AM_to_M_stationarity}. Thus, $\bar x$ is an
	M-stationary point of \eqref{eq:basic_problem}.
	\item[(b)] Let us assume w.l.o.g.\ that $\norm{\lambda_k}\to\infty$ holds true. Similar to
	(a), we find a bounded sequence $\{x_k^*\}_{k\in\N}\subset\R^n$ such that 
	\eqref{eq:from_AM_to_M_stationarity} holds. Dividing the first term in
	\eqref{eq:from_AM_to_M_stationarity} by $\norm{\lambda_k}$ and exploiting the positive
	homogeneity of the coderivative, we find
	\begin{equation}\label{eq:from_AM_to_FJM_stationarty}
		\forall k\in\N\colon\quad
		\frac{\varepsilon_k-x_k^*}{\norm{\lambda_k}}
		\in D^*\Phi(x_k,y_k)\left(\frac{\lambda_k}{\norm{\lambda_k}}\right).
	\end{equation}
	Clearly, $\{\lambda_k/\norm{\lambda_k}\}_{k\in\N}$ possesses a non-vanishing accumulation
	point $\lambda\in\R^m$. Thus, we may assume w.l.o.g.\ that $\lambda_k/\norm{\lambda_k}\to\lambda$
	holds. Taking the limit $k\to\infty$ in \eqref{eq:from_AM_to_FJM_stationarty} and
	exploiting the robustness of the limiting normal cone, we find $0\in D^*\Phi(\bar x,0)(\lambda)$.
	Due to $\lambda\neq 0$, the claim follows.
	\end{enumerate}
	Finally, let us comment on FJM-stationarity of the AM-stationary point $\bar x$. 
	In the case (a), $\bar x$ is M- and, thus, FJM-stationary with $\lambda_0:=1$.
	In the setting (b), $\bar x$ is FJM-stationary with $\lambda_0:=0$.
	This completes the proof.
\end{proof}

The example below shows that the concept of AM-stationarity is indeed stronger than FJM-stationarity.
\begin{example}\label{ex:FJM_but_not_AM_stationary}
	We consider the setting
	\[
		\forall x\in\R\colon\quad
		f(x):=-|x|
		\qquad
		\Phi(x):=\{0\}
	\]
	and investigate the point $\bar x:=0$. 
	Clearly, we have $\gph\Phi=\R\times\{0\}$. Furthermore, for arbitrary $x\in\R$ and $\lambda\in\R$,
	we find
	\[
		\partial f(x)\subset\{-1,1\}
		\qquad 
		D^*\Phi(x,0)(\lambda)=\{0\},
	\]
	and this already shows that $\bar x$ is FJM-stationary (with $\lambda_0:=0$ and arbitrary
	$\lambda\neq 0$) but not AM-stationary.
\end{example}

Let $\bar x\in\R^n$ be an FJM-stationary point of \eqref{eq:basic_problem}.
In order to ensure that it is already an M-stationary point, we need to exclude
the case where the leading multiplier equals zero. This corresponds to validity of the
constraint qualification
\[
	0\in D^*\Phi(\bar x,0)(\lambda)
	\quad\Longrightarrow\quad
	\lambda=0
\]
which is equivalent to $\ker D^*\Phi(\bar x,0)=\{0\}$ and, thus, metric regularity
of $\Phi$ at $(\bar x,0)$.
In the subsequent section, we construct a reasonably mild condition, which will be called AM-regularity,
ensuring that a given AM-stationary
point of \eqref{eq:basic_problem} is already M-stationary. Noting that AM-stationarity is
more restrictive than FJM-stationarity, there is some justified hope that AM-regularity
is weaker than metric regularity of $\Phi$, see \cref{thm:metric_regularity_feasibility_map}
where this is actually proven.
Foresightfully, we summarize our results in \cref{fig:stationarities}.
\begin{figure}[h]
\centering
\begin{tikzpicture}[->]

  \node[punkt] at (0,0) 	(A){local minimizer};
  \node[punkt] at (0,-2) 	(B){AM-stationarity};
  \node[punkt] at (0,-4) 	(C){FJM-stationarity};
  \node[punkt] at (6,-2)    (D){M-stationarity};

  \path     (A) edge[-implies,thick,double] node {}(B)
            (B) edge[-implies,thick,double] node {}(C)
            (B) edge[-implies,thick,double, bend left, dashed] node[above] {AM-regularity}(D)
            (D) edge[-implies,thick,double] node {}(B)
            (C) edge[-implies,thick,double, bend right, dashed] node[below right] {metric regularity}(D);
\end{tikzpicture}
\caption{
	Relations between the stationarity conditions addressing \eqref{eq:basic_problem}.
	Dashed relations hold under validity of the mentioned qualification conditions, respectively.
}
\label{fig:stationarities}
\end{figure}
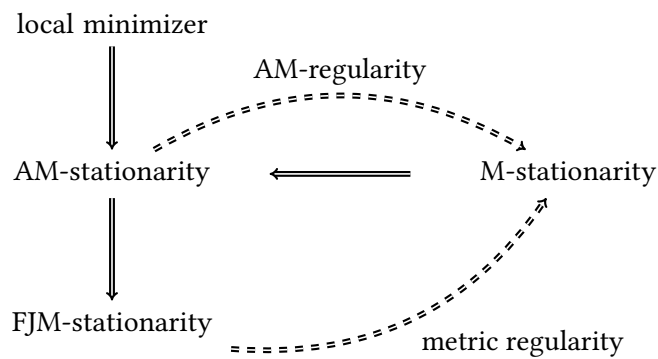

\subsection{Asymptotic regularity}\label{sec:asymptotic_regularity}

We now raise the question under which additional condition a given AM-stationary point
of \eqref{eq:basic_problem} is already an M-stationary point. 
In order to deal with this issue, we make use of the set-valued mapping
$\mathcal M\colon \R^n\times \R^m\tto \R^n$ given by
\[
	\forall x\in\R^n\,\forall y\in\R^m\colon\quad
	\mathcal M(x,y):=\bigcup\limits_{\lambda \in\R^m}D^*\Phi(x,y)(\lambda).
\]
By definition, we have the following result.
\begin{lemma}\label{lem:chararcterizing_AMS_points_via_M}
	Let $\bar x\in M$ be a feasible point of \eqref{eq:basic_problem}.
	Then the following assertions hold.
	\begin{enumerate}
		\item[(a)] If $\bar x$ is an AM-stationary point of \eqref{eq:basic_problem}, then we have
			\begin{equation}\label{eq:consequence_of_AM_stationarity}
				\partial f(\bar x)
				\cap
				\left(-\limsup\limits_{x\to\bar x,\,y\to 0}\mathcal M(x,y)\right)
				\neq
				\varnothing.
			\end{equation}
		\item[(b)] If, on the other hand, $f$ is continuously differentiable at $\bar x$
			while
			\[
				-\nabla f(\bar x)\in\limsup\limits_{x\to\bar x,\,y\to 0}\mathcal M(x,y)
			\]
			holds, then $\bar x$ is an AM-stationary point of \eqref{eq:basic_problem}.
	\end{enumerate}
\end{lemma}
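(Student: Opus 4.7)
The plan is to derive both directions directly from the definitions, exploiting the robustness of the limiting subdifferential/normal cone and the definition of the outer Painlev\'{e}--Kuratowski limit.

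For part (a), I would start from the sequences $\{x_k\}_{k\in\N}$, $\{\varepsilon_k\}_{k\in\N}$, $\{y_k\}_{k\in\N}\subset\R^m$, and $\{\lambda_k\}_{k\in\N}\subset\R^m$ furnished by the AM-stationarity of $\bar x$. The inclusion \eqref{eq:AMS_points} allows me to split $\varepsilon_k = x_k^* + z_k^*$ with $x_k^*\in\partial f(x_k)$ and $z_k^*\in D^*\Phi(x_k,y_k)(\lambda_k)$; by definition of $\mathcal M$ the latter gives $z_k^*\in\mathcal M(x_k,y_k)$. Local Lipschitz continuity of $f$ together with \cite[Corollary~1.81]{Mordukhovich2006} ensures that $\{x_k^*\}_{k\in\N}$ is bounded, so along a subsequence $x_k^*\to x^*$, and robustness of the limiting subdifferential (i.e., the closedness of $\partial f$) yields $x^*\in\partial f(\bar x)$. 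Then $z_k^* = \varepsilon_k - x_k^* \to -x^*$, and since $z_k^*\in\mathcal M(x_k,y_k)$ with $x_k\to\bar x$ and $y_k\to 0$, the very definition of the outer Painlev\'{e}--Kuratowski limit gives $-x^*\in\limsup_{x\to\bar x,\,y\to 0}\mathcal M(x,y)$. Hence $x^*$ lies in the intersection claimed in \eqref{eq:consequence_of_AM_stationarity}.

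For part (b), the hypothesis directly provides, via the definition of the outer limit, sequences $x_k\to\bar x$, $y_k\to 0$, and $z_k\to -\nabla f(\bar x)$ with $z_k\in\mathcal M(x_k,y_k)$. By definition of $\mathcal M$, for each sufficiently large $k$ there exists $\lambda_k\in\R^m$ such that $z_k\in D^*\Phi(x_k,y_k)(\lambda_k)$. Continuous differentiability of $f$ at $\bar x$ implies $\partial f(x_k) = \{\nabla f(x_k)\}$ for large $k$, with $\nabla f(x_k)\to\nabla f(\bar x)$. Setting $\varepsilon_k := \nabla f(x_k) + z_k$, I would verify $\varepsilon_k \in \partial f(x_k) + D^*\Phi(x_k,y_k)(\lambda_k)$ and $\varepsilon_k\to \nabla f(\bar x) + (-\nabla f(\bar x)) = 0$, which is precisely the requirement of \cref{def:Asymptotic_M_stationary_point}.

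The only nontrivial step is the passage to the limit of the subgradient sequence in part (a); this is handled by the standard combination of boundedness of $\partial f$ on a neighborhood of $\bar x$ (from local Lipschitz continuity) and the robustness of the limiting subdifferential. The remainder of both parts is pure bookkeeping against the definitions of AM-stationarity, of $\mathcal M$, and of the Painlev\'{e}--Kuratowski outer limit.
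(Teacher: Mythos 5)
Your proposal is correct and follows essentially the same route as the paper: in (a) the same splitting of \eqref{eq:AMS_points}, boundedness of the subgradient sequence via local Lipschitz continuity and \cite[Corollary~1.81]{Mordukhovich2006}, and robustness of the limiting subdifferential; in (b) the same choice $\varepsilon_k:=\nabla f(x_k)+z_k$ with continuity of $\nabla f$. The only difference is cosmetic (you label the subgradient rather than the coderivative element as $x_k^*$), so nothing further is needed.
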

\begin{proof}
	\begin{enumerate}
	\item[(a)] Let $\bar x$ be an AM-stationary point of \eqref{eq:basic_problem}.
	Then we find $\{x_k\}_{k\in\N},\{\varepsilon_k\}_{k\in\N},\{x_k^*\}_{k\in\N}\subset\R^n$,
	and $\{y_k\}_{k\in\N}\subset\R^m$ such that $x_k\to \bar x$, $\varepsilon_k\to 0$, 
	$y_k\to 0$, as well as $x_k^*\in\mathcal M(x_k,y_k)$ and $\varepsilon_k-x_k^*\in\partial f(x_k)$
	for all $k\in\N$ hold. Noting that the set-valued map $x\tto\partial f(x)$ possesses uniformly
	bounded image sets
	around $\bar x$ by local Lipschitz continuity of $f$, 
	see \cite[Corollary~1.81]{Mordukhovich2006},
	the sequence $\{x_k^*\}_{k\in\N}$ needs to be bounded as well and, thus, possesses an
	accumulation point $x^*\in\R^n$ which, by definition, belongs to 
	$\limsup_{x\to\bar x,y\to 0}\mathcal M(x,y)$. 
	On the other hand, $-x^*\in\partial f(\bar x)$ is also true by robustness
	of the limiting normal cone to $\epi f$, i.e., 
	by closedness of the graph associated with the normal cone
	mapping of this set. 
	\item[(b)] From the assumptions, we find $\{x_k\}_{k\in\N}$,
	$\{x_k^*\}_{k\in\N}\subset\R^n$, and $\{y_k\}_{k\in\N}\subset\R^m$ such that $x_k\to \bar x$,
	$y_k\to 0$, and $x_k^*\to-\nabla f(\bar x)$ as well as $x_k^*\in\mathcal M(x_k,y_k)$ for all
	$k\in\N$ hold. Setting $\varepsilon_k:=\nabla f(x_k)+x_k^*$ for each $k\in\N$, we
	have $\varepsilon_k\to 0$ by continuity of 	$\nabla f$ at $\bar x$.
	Thus, the definition of $\mathcal M$ shows that $\bar x$ is an AM-stationary point of
	\eqref{eq:basic_problem}.
	\end{enumerate}
\end{proof}

Observe that statement (b) of the above lemma cannot be generalized to situations where
$f$ is nonsmooth at the point of interest, i.e., condition
\eqref{eq:consequence_of_AM_stationarity} is not necessarily sufficient for a feasible
point $\bar x\in M$ of \eqref{eq:basic_problem} to be AM-stationary.
\begin{example}\label{ex:non_AM_stationary_point}
	Let us consider the setting
	\[
		\forall x\in\R\colon\quad
		f(x):=-|x|\qquad \Phi(x):=[-x^2,\infty)
	\]
	and fix the feasible point $\bar x:=0$ of the associated problem 
	\eqref{eq:basic_problem}. We obtain
	\[
		\partial f(x)
		=
		\begin{cases}
			-1		&x>0\\
			\{-1,1\}	&x=0\\
			1		&x<0
		\end{cases}
		\qquad
		D^*\Phi(x,y)(\lambda)
		=
		\begin{cases}
			\{-2\lambda x\}	&y=-x^2,\,\lambda\geq 0\\
			\{0\}			&y>-x^2,\,\lambda=0\\
			\varnothing		&\text{otherwise}
		\end{cases}
	\]
	for all $x,y,\lambda\in\R$.
	This yields
	\[
		\mathcal M(x,y)
		=
		\begin{cases}
			\R_-	&x>0,\,y=-x^2\\
			\R_+	&x<0,\,y=-x^2\\
			\{0\}	&y>-x^2\;\text{or}\;x=y=0
		\end{cases}
	\]
	for all $x,y\in\R$,	i.e., we find
	\[
		\limsup\limits_{x\to\bar x,\,y\to 0}\mathcal M(x,y)=\R
	\]
	in the present situation, and this shows that \eqref{eq:consequence_of_AM_stationarity} holds.
	On the other hand, we clearly have the inclusion
	$\partial f(x)+\mathcal M(x,y)\subset(-\infty,-1]\cup[1,\infty)$ for all
	$x,y\in\R$, and this clarifies that $\bar x$ cannot be AM-stationary.
\end{example}

By definition of $\mathcal M$, a given feasible point $\bar x\in M$ of
\eqref{eq:basic_problem} is M-stationary if and only if
\[
	\partial f(\bar x)
	\cap
	\left(-\mathcal M(\bar x,0)\right)
	\neq
	\varnothing
\]
holds. Keeping statement (a) of \cref{lem:chararcterizing_AMS_points_via_M} 
in mind, this motivates the subsequent definition.
\begin{definition}\label{def:AMS_regularity}
	A feasible point $\bar x\in M$ of \eqref{eq:basic_problem} is
	said to be \emph{asymptotically Mordukhovich-regular} (AM-regular for short) whenever
	\[
		\limsup\limits_{x\to\bar x,y\to 0}\mathcal M(x,y)
		\subset
		\mathcal M(\bar x,0)
	\]
	is valid.
\end{definition}

By definition, a feasible point $\bar x\in M$ of \eqref{eq:basic_problem} is AM-regular
if and only if the mapping $\mathcal M$ is so-called \emph{outer} (sometimes also
referred to as \emph{upper}) semicontinuous at $(\bar x,0)$ in the sense of set-valued mappings,
see \cite{AubinFrankowska2009,RockafellarWets1998}.

Based on the above observations, the subsequent theorem follows
immediately from \cref{thm:local_minimizers_AMS_points} and
\cref{lem:chararcterizing_AMS_points_via_M}.
It basically says that AM-regularity is a constraint qualification for
\eqref{eq:basic_problem} ensuring M-stationarity of local minimizers.
\begin{theorem}\label{thm:AMS_regularity_CQ}
	Let $\bar x\in M$ be an AM-regular local minimizer of
	\eqref{eq:basic_problem}.
	Then $\bar x$ is an M-stationary point of \eqref{eq:basic_problem}.
\end{theorem}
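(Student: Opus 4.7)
The plan is to combine the three preceding building blocks in the obvious way, so that the proof amounts essentially to a chain of inclusions. First, since $\bar x$ is a local minimizer of \eqref{eq:basic_problem}, I would invoke \cref{thm:local_minimizers_AMS_points} to conclude, without any additional assumption, that $\bar x$ is an AM-stationary point of \eqref{eq:basic_problem}.

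Next, I would apply part (a) of \cref{lem:chararcterizing_AMS_points_via_M} to this AM-stationary point. That lemma yields the non-emptiness statement
\[
  \partial f(\bar x) \cap \Bigl(-\limsup\limits_{x\to\bar x,\,y\to 0}\mathcal M(x,y)\Bigr) \neq \varnothing,
\]
i.e.\ there exists some $x^*\in\partial f(\bar x)$ with $-x^*\in\limsup_{x\to\bar x,y\to 0}\mathcal M(x,y)$.

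Now I would use the hypothesis of AM-regularity at $\bar x$, which by \cref{def:AMS_regularity} is precisely the outer semicontinuity inclusion
\[
  \limsup\limits_{x\to\bar x,\,y\to 0}\mathcal M(x,y) \subset \mathcal M(\bar x,0).
\]
Combining this with the previous step gives $-x^*\in\mathcal M(\bar x,0)$, and by the very definition of $\mathcal M$ there exists a multiplier $\lambda\in\R^m$ with $-x^*\in D^*\Phi(\bar x,0)(\lambda)$. Hence $0\in x^* + D^*\Phi(\bar x,0)(\lambda)\subset \partial f(\bar x)+D^*\Phi(\bar x,0)(\lambda)$, which is exactly the M-stationarity condition \eqref{eq:M_Stationarity}.

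There is no real obstacle here: the theorem is a straightforward composition of \cref{thm:local_minimizers_AMS_points}, \cref{lem:chararcterizing_AMS_points_via_M}(a), and \cref{def:AMS_regularity}. All the conceptual work has been front-loaded into the penalization argument of \cref{thm:local_minimizers_AMS_points} (to produce the asymptotic sequences), the boundedness-plus-robustness argument behind \cref{lem:chararcterizing_AMS_points_via_M}(a) (to extract an accumulation point of $\{x_k^*\}_{k\in\N}$ via local Lipschitzness of $f$ and \cite[Corollary~1.81]{Mordukhovich2006}), and the outer semicontinuity encoded in AM-regularity (which plays the role of passing the limit through $D^*\Phi$ without requiring boundedness of the multiplier sequence $\{\lambda_k\}_{k\in\N}$). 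The proof should therefore be short, consisting of a single paragraph that explicitly cites these three ingredients in sequence.
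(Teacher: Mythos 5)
Your proposal is correct and is exactly the paper's argument: the paper states that \cref{thm:AMS_regularity_CQ} ``follows immediately from \cref{thm:local_minimizers_AMS_points} and \cref{lem:chararcterizing_AMS_points_via_M}'', using the observation made just before \cref{def:AMS_regularity} that M-stationarity of $\bar x$ is equivalent to $\partial f(\bar x)\cap\bigl(-\mathcal M(\bar x,0)\bigr)\neq\varnothing$. You have simply written out the chain of inclusions that the paper leaves implicit, including the correct final step recovering a multiplier $\lambda$ from $-x^*\in\mathcal M(\bar x,0)$.
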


Next, we want to embed AM-regularity into the landscape of qualification conditions
which address \eqref{eq:basic_problem}. It is well known from 
\cite[Theorem~3]{Gfrerer2013} or \cite[Theorem~5.48]{Mordukhovich2006} 
that metric subregularity of $\Phi$ at $(\bar x,0)$
is enough to guarantee that a local minimizer $\bar x\in M$ of \eqref{eq:basic_problem} 
is an M-stationary point of the latter. 
Using the concept of \emph{directional} metric subregularity, this statement can be
weakened even more, see \cite[Corollary~2]{Gfrerer2013}.
We know that polyhedral set-valued mappings are metrically subregular at all points of their graphs,
i.e., this property already serves as a constraint qualification for \eqref{eq:basic_problem}.
Below, we show that polyhedrality of $\Phi$ is also sufficient for the validity of
AM-regularity.

\begin{theorem}\label{thm:polyhedrality_and_AMS_regularity}
	Let $\Phi$ be a polyhedral set-valued mapping.
	Then each feasible point $\bar x\in M$ of \eqref{eq:basic_problem} is AM-regular.
\end{theorem}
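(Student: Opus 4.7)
The plan is to recognize that AM-regularity at $\bar x$ is precisely outer semicontinuity of the set-valued mapping $\mathcal M$ at $(\bar x,0)$, as the author already remarks after \cref{def:AMS_regularity}. Since outer semicontinuity at every point is equivalent to closedness of $\gph \mathcal M\subset\R^n\times\R^m\times\R^n$, I would prove the stronger global statement that $\gph \mathcal M$ is itself polyhedral (i.e., a finite union of convex polyhedra), from which AM-regularity at every feasible point follows at once.

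My argument would proceed in three steps. First, I would invoke the classical fact that for any convex polyhedron $P\subset\R^n\times\R^m$ the graph $\gph \mathcal N_P$ of the normal-cone mapping is polyhedral: it decomposes as the union, taken over the finitely many faces $F$ of $P$, of the products $F\times N_F$, where $N_F$ is the (constant, polyhedral) normal cone of $P$ along the relative interior of $F$. Second, writing $\gph\Phi=\bigcup_{i=1}^N P_i$ with each $P_i$ convex polyhedral, I would establish the identity
\[
    \mathcal N_{\gph\Phi}(x,y)=\bigcup_{i\,:\,(x,y)\in P_i}\mathcal N_{P_i}(x,y),
\]
which yields $\gph \mathcal N_{\gph\Phi}=\bigcup_i\gph \mathcal N_{P_i}$ and is thus polyhedral in $\R^n\times\R^m\times\R^n\times\R^m$. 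Third, the set $\gph \mathcal M$ is precisely the image of $\gph \mathcal N_{\gph\Phi}$ under the linear projection $(x,y,x^*,-\lambda)\mapsto(x,y,x^*)$ that forgets the dummy multiplier $\lambda$; by the Fourier--Motzkin elimination theorem, the projection of a polyhedral set is polyhedral, hence closed. This is the ``simple projection argument'' hinted at in the acknowledgements.

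The main obstacle will be verifying the normal-cone decomposition in the second step. The inclusion $\supset$ should follow by approximating a given $v\in\mathcal N_{P_i}(x,y)$ using regular normals at points of the dense open subset $P_i\setminus\bigcup_{j\neq i}P_j$ (after discarding redundant pieces satisfying $P_i\subset P_j$), on which $\gph\Phi$ locally coincides with $P_i$ so that $\widehat{\mathcal N}_{\gph\Phi}=\widehat{\mathcal N}_{P_i}$, combined with outer semicontinuity of $\mathcal N_{P_i}$ (which is automatic since $P_i$ is convex). The inclusion $\subset$ should follow by unwinding the sequential definition of the limiting normal cone, passing to a subsequence of the defining points that lies entirely in a single $P_i$ (using closedness of the $P_i$ and finiteness of the index set), and again using outer semicontinuity of $\mathcal N_{P_i}$.
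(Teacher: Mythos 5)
Your overall architecture---reduce AM-regularity at every feasible point to closedness of $\gph\mathcal M$, and obtain the latter by projecting $\gph\mathcal N_{\gph\Phi}$ and invoking that linear images of polyhedral sets are polyhedral---is sound and close in spirit to the paper's projection argument; Steps 1 and 3 are fine. The gap is the identity claimed in Step 2. For a finite union $A=\bigcup_{i}P_i$ of convex polyhedra one only has the inclusion $\mathcal N_A(x)\subset\bigcup_{i:\,x\in P_i}\mathcal N_{P_i}(x)$; the reverse inclusion is false in general. The standard counterexample is the complementarity set $A=(\R_+\times\{0\})\cup(\{0\}\times\R_+)\subset\R\times\R$, which is realized as $\gph\Phi$ for the polyhedral mapping $\Phi\colon\R\tto\R$ with $\Phi(x)=\{0\}$ for $x>0$, $\Phi(0)=\R_+$, $\Phi(x)=\varnothing$ for $x<0$. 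At the origin one computes $\mathcal N_A(0,0)=(\R_-\times\R_-)\cup(\{0\}\times\R)\cup(\R\times\{0\})$, whereas $\mathcal N_{P_1}(0,0)\cup\mathcal N_{P_2}(0,0)=(\R_-\times\R)\cup(\R\times\R_-)$ contains, e.g., $(-1,1)$, which is not a limiting normal to $A$ at the origin. The reason your sketch of the inclusion $\supset$ fails is that outer semicontinuity of $\mathcal N_{P_i}$ runs in the wrong direction: to realize a given $v\in\mathcal N_{P_i}(\bar x)=\widehat{\mathcal N}_{P_i}(\bar x)$ as a limit of regular normals taken at points of $P_i\setminus\bigcup_{j\neq i}P_j$ you would need \emph{inner} semicontinuity of $x\mapsto\widehat{\mathcal N}_{P_i}(x)$ along that subset, and this fails at every lower-dimensional face of $P_i$, where the regular normal cone jumps up (in the example, $(-1,1)$ is a normal to $P_1=\R_+\times\{0\}$ only at the origin itself, and there $\widehat{\mathcal N}_A(0,0)=\R_-\times\R_-$ does not contain it). With only the inclusion $\subset$ you merely get $\gph\mathcal N_{\gph\Phi}\subset\bigcup_i\gph\mathcal N_{P_i}$, which does not yield closedness of the left-hand side.

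The conclusion you want from Step 2---polyhedrality of $\gph\mathcal N_{\gph\Phi}$---is nevertheless true, but it requires a finer argument: the regular normal cone of $A$ is the \emph{intersection} $\bigcap_{i\in I(x)}\widehat{\mathcal N}_{P_i}(x)$ over the active pieces, it takes only finitely many values, each a convex polyhedral cone, and it is constant on the members of a finite polyhedral stratification of $A$; the limiting cone at $\bar x$ is then the union of the finitely many values realized on strata whose closures contain $\bar x$. The paper sidesteps any such formula: it uses only that $\mathcal N_{\gph\Phi}$ takes finitely many values, each a finite union of convex polyhedral cones, extracts a subsequence along which $(x_k^*,-\lambda_k)$ stays in one fixed convex polyhedral cone $\mathcal K_i\subset\mathcal N_{\gph\Phi}(x_k,y_k)$, projects that single cone (closed because convex polyhedral) to recover a limit multiplier $\lambda$ with $(x^*,-\lambda)\in\mathcal K_i$, and concludes via robustness of the limiting normal cone. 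If you repair Step 2 along one of these lines your proof goes through; as written it does not.
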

\begin{proof}
	Fix sequences $\{x_k\}_{k\in\N},\{x_k^*\}_{k\in\N}\subset\R^n$ and
	$\{y_k\}_{k\in\N}\subset\R^m$ with $x_k\to\bar x$, $y_k\to 0$, and $x_k^*\to x^*$ for
	some $x^*\in\R^n$ such that $x_k^*\in\mathcal M(x_k,y_k)$ holds for all $k\in\N$.
	For each $k\in\N$, we find $\lambda_k\in\R^m$ such that 
	$(x^*_k,-\lambda_k)\in\mathcal{N}_{\gph\Phi}(x_k,y_k)$ holds.
	Noting that $\gph\Phi$ is the union of finitely many convex polyhedral sets, there only
	exist finitely many regular and, thus, limiting normal cones to the set $\gph\Phi$.
	Particularly, we find a closed cone $\mathcal K\subset\R^n\times\R^m$ such that
	$(x_k^*,-\lambda_k)\in\mathcal K\subset\mathcal N_{\gph\Phi}(x_k,y_k)$ holds along a subsequence (without relabeling). 
	By polyhedrality of $\Phi$, $\mathcal K$ can be represented as the union of
	finitely many convex, polyhedral cones $\mathcal K_1,\ldots,\mathcal K_s\subset\R^n\times\R^m$.
	Again, along a subsequence (without relabeling), we have 
	$(x_k^*,-\lambda_k)\in\mathcal K_i$ for some
	$i\in\{1,\ldots,s\}$.
	Let $P\colon\R^n\times\R^m\to\R^n$ be the projection operator given by $P(x,y):=x$ for all
	$x\in\R^n$ and $y\in\R^m$. Then $P\mathcal K_i$ is polyhedral and, thus, closed by 
	polyhedrality of $\mathcal K_i$,
	i.e., from $\{x_k^*\}_{k\in\N}\subset P\mathcal K_i$ we obtain $x^*\in P\mathcal K_i$.
	This yields the existence of some $\lambda\in\R^m$ such that $(x^*,-\lambda)\in\mathcal K_i$ and,
	thus, $(x^*,-\lambda)\in\mathcal K$. The robustness of the limiting normal cone
	implies $\mathcal K\subset\mathcal N_{\gph \Phi}(\bar x,0)$ due to
	$x_k\to\bar x$ and $y_k\to 0$. Particularly, we have $x^*\in D^*\Phi(\bar x,0)(\lambda)$, i.e.,
	$x^*\in\mathcal M(\bar x,0)$.
	This shows that $\bar x$ is an AM-regular point of \eqref{eq:basic_problem}.
\end{proof}

A natural consequence of the definition of AM-regularity via the
Painlev\'{e}--Kuratowski limit is subsumed in the following lemma.
\begin{lemma}\label{lem:boundedness_of_multipliers}
	Let $\bar x\in M$ be a feasible point of \eqref{eq:basic_problem}.
	Assume that for each sequences $\{x_k\}_{k\in\N},\{x_k^*\}_{k\in\N}\subset\R^n$,
	and $\{y_k\}_{k\in\N}\subset\R^m$ such that $x_k\to\bar x$, $y_k\to 0$, $x_k^*\to x^*$
	for some $x^*\in\R^n$, and
	$x_k^*\in\mathcal M(x_k,y_k)$ for all $k\in\N$ hold, we find a bounded
	sequence $\{\lambda_k\}_{k\in\N}\subset\R^m$ such that $x_k^*\in D^*\Phi(x_k,y_k)(\lambda_k)$
	holds for all $k\in\N$.
	Then $\bar x$ is AM-regular.
\end{lemma}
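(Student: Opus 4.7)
The plan is to verify the definition of AM-regularity directly by chasing an arbitrary element of the outer limit back into $\mathcal M(\bar x,0)$. That is, I would pick $x^* \in \limsup_{x\to\bar x,\,y\to 0}\mathcal M(x,y)$ and show $x^* \in \mathcal M(\bar x,0)$. By the definition of the Painlev\'{e}--Kuratowski outer limit, there exist sequences $\{x_k\}_{k\in\N}\subset\R^n$, $\{y_k\}_{k\in\N}\subset\R^m$, and $\{x_k^*\}_{k\in\N}\subset\R^n$ with $x_k\to\bar x$, $y_k\to 0$, and $x_k^*\to x^*$ satisfying $x_k^* \in \mathcal M(x_k, y_k)$ for all $k\in\N$.

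Next I would apply the hypothesis of the lemma to precisely these sequences in order to produce a bounded multiplier sequence $\{\lambda_k\}_{k\in\N}\subset\R^m$ with $x_k^* \in D^*\Phi(x_k,y_k)(\lambda_k)$, or equivalently $(x_k^*,-\lambda_k)\in\mathcal N_{\gph\Phi}(x_k,y_k)$, for every $k\in\N$. Boundedness allows extraction of a subsequence (without relabeling) along which $\lambda_k\to\bar\lambda$ for some $\bar\lambda\in\R^m$.

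The concluding step is to invoke the robustness of the limiting normal cone, i.e., the fact that the graph of the set-valued mapping $(x,y) \tto \mathcal N_{\gph\Phi}(x,y)$ is closed. Since $(x_k,y_k)\to(\bar x,0)$ and $(x_k^*,-\lambda_k)\to(x^*,-\bar\lambda)$ with $(x_k^*,-\lambda_k)\in\mathcal N_{\gph\Phi}(x_k,y_k)$, passing to the limit gives $(x^*,-\bar\lambda)\in\mathcal N_{\gph\Phi}(\bar x,0)$, that is, $x^*\in D^*\Phi(\bar x,0)(\bar\lambda)\subset\mathcal M(\bar x,0)$. As $x^*$ was an arbitrary element of the outer limit, AM-regularity of $\bar x$ follows.

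There is no real obstacle here; the argument is essentially mechanical. The only conceptual point worth emphasizing is that the obstruction to $\mathcal M$ being outer semicontinuous comes precisely from the fact that its defining union over $\lambda\in\R^m$ is not a priori controlled in $\lambda$, so a sequence $x_k^*\in\mathcal M(x_k,y_k)$ need not arise from a convergent multiplier sequence. The hypothesis of the lemma is tailored to remove exactly this obstruction, after which the robustness (closed-graph property) of the limiting normal cone does all the remaining work.
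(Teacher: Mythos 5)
Your argument is correct and is essentially identical to the paper's own proof: both extract the sequences from the definition of the outer limit, invoke the hypothesis to obtain a bounded multiplier sequence, pass to a convergent subsequence, and conclude via the robustness (closed-graph property) of the limiting normal cone that $x^*\in D^*\Phi(\bar x,0)(\bar\lambda)\subset\mathcal M(\bar x,0)$. No issues.
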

\begin{proof}
	Let $x^*\in\limsup_{x\to\bar x,y\to 0}\mathcal M(x,y)$ be arbitrarily chosen.
	Then we find $\{x_k\}_{k\in\N},\{x_k^*\}_{k\in\N}\subset\R^n$,
	and $\{y_k\}_{k\in\N}\subset\R^m$ such that $x_k\to\bar x$, $y_k\to 0$, $x_k^*\to x^*$, and
	$x_k^*\in\mathcal M(x_k,y_k)$ for all $k\in\N$ hold.
	By assumption, there is a bounded sequence $\{\lambda_k\}_{k\in\N}\subset\R^m$
	satisfying $x^*_k\in D^*\Phi(x_k,y_k)(\lambda_k)$ for all $k\in\N$.
	Observing that $\{\lambda_k\}_{k\in\N}$ possesses an accumulation point
	$\lambda\in\R^m$, $x^*\in D^*\Phi(\bar x,0)(\lambda)$ follows from the robustness
	of the limiting normal cone and the definition of the coderivative.
	The latter, however, yields $x^*\in\mathcal M(\bar x,0)$, i.e., $\bar x$ is AM-regular.
\end{proof}

Employing the neighborhood characterization of the metric regularity property, 
we now can state a sufficient condition for AM-regularity.

\begin{theorem}\label{thm:metric_regularity_feasibility_map}
	Let $\bar x\in M$ be a feasible point of \eqref{eq:basic_problem} such that $\Phi$ is
	metrically regular at $(\bar x,0)$.
	Then $\bar x$ is AM-regular.
\end{theorem}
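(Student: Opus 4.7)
My plan is to combine the quantitative (coderivative-based) characterization of metric regularity with the stability of this property under perturbations of the reference point, and then apply \cref{lem:boundedness_of_multipliers} to conclude AM-regularity.

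First, I would recall that the Mordukhovich criterion reads $\ker D^*\Phi(\bar x,0)=\{0\}$, but the relevant quantitative version is sharper: the modulus of metric regularity of $\Phi$ at $(\bar x,0)$ equals
\[
    \bigl(\inf\{\norm{x^*}\,|\,x^*\in D^*\Phi(\bar x,0)(\lambda),\,\norm{\lambda}=1\}\bigr)^{-1},
\]
so whenever $\Phi$ is metrically regular at $(\bar x,0)$, there is $\kappa>0$ such that $\norm{\lambda}\leq\kappa\norm{x^*}$ for all $\lambda\in\R^m$ and all $x^*\in D^*\Phi(\bar x,0)(\lambda)$. This is the standard \enquote{dual} form of the Mordukhovich criterion that one finds in \cite[Theorem~4.18]{Mordukhovich2006}.

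Next, I would invoke the well-known robustness of metric regularity: if $\Phi$ is metrically regular at $(\bar x,0)$, then it is metrically regular at every $(x,y)\in\gph\Phi$ in a sufficiently small neighborhood $U\times V$ of $(\bar x,0)$, with modulus uniformly bounded by some constant, say, by $2\kappa$. Combining this neighborhood statement with the quantitative form of the Mordukhovich criterion applied at each such point $(x,y)$, I obtain the \emph{uniform} coderivative estimate
\[
    \forall(x,y)\in\gph\Phi\cap(U\times V)\,\forall\lambda\in\R^m\,\forall x^*\in D^*\Phi(x,y)(\lambda)\colon\quad
    \norm{\lambda}\leq 2\kappa\norm{x^*}.
\]

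Finally, I would verify the hypothesis of \cref{lem:boundedness_of_multipliers}. Given sequences $\{x_k\},\{x_k^*\}\subset\R^n$ and $\{y_k\}\subset\R^m$ with $x_k\to\bar x$, $y_k\to 0$, $x_k^*\to x^*$, and $x_k^*\in\mathcal M(x_k,y_k)$, the definition of $\mathcal M$ supplies multipliers $\lambda_k\in\R^m$ with $x_k^*\in D^*\Phi(x_k,y_k)(\lambda_k)$. For $k$ large enough, $(x_k,y_k)\in\gph\Phi\cap(U\times V)$, so the uniform estimate yields $\norm{\lambda_k}\leq 2\kappa\norm{x_k^*}$, which is bounded since $x_k^*\to x^*$. \Cref{lem:boundedness_of_multipliers} then delivers AM-regularity of $\bar x$.

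The only nontrivial ingredient is the neighborhood stability of metric regularity together with its uniform quantitative coderivative form; once that is in hand, the rest is an immediate appeal to \cref{lem:boundedness_of_multipliers}. I would not expect any real technical obstacle, only the need to cite the stability statement carefully.
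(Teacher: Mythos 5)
Your proof is correct and follows essentially the same route as the paper: both arguments reduce the claim to the uniform coderivative estimate $\norm{\lambda}\leq\kappa\norm{x^*}$ for all $x^*\in D^*\Phi(x,y)(\lambda)$ with $(x,y)\in\gph\Phi$ near $(\bar x,0)$, and then conclude via \cref{lem:boundedness_of_multipliers}. The only difference is cosmetic: the paper obtains this estimate directly from the neighborhood characterization of metric regularity in \cite[Theorem~4.5]{Mordukhovich2006}, whereas you assemble it from the pointwise exact-modulus form of the Mordukhovich criterion together with the stability of metric regularity (with controlled modulus) at nearby graph points --- both standard facts, so no gap.
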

\begin{proof}
	Exploiting \cite[Theorem~4.5]{Mordukhovich2006} and the definition of the
	limiting coderivative, metric regularity of $\Phi$ at $(\bar x,0)$
	guarantees the existence of a constant $\kappa>0$ and a neighborhood
	$U$ of $(\bar x,0)$ such that
	\begin{align*}
		&\forall (x,y)\in\gph \Phi
		\cap U\;
		\forall x^*\in\R^n\;\forall \lambda\in\R^m
		\colon\quad
		x^*\in D^*\Phi(x,y)(\lambda)
		\,\Longrightarrow\,\norm{\lambda}\leq\kappa\norm{x^*}.		
	\end{align*}
		
	Choose sequences $\{x_k\}_{k\in\N},\{x^*_k\}_{k\in\N}\subset\R^n$ and
	$\{y_k\}_{k\in\N}\subset\R^m$ as well as $x^*\in\R^n$ such that $x_k\to\bar x$,
	$y_k\to 0$, $x^*_k\to x^*$, and $x_k^*\in\mathcal M(x_k,y_k)$ for all $k\in\N$
	hold. By definition of $\mathcal M$, we find a sequence $\{\lambda_k\}_{k\in\N}\subset\R^m$
	such that $x_k^*\in D^*\Phi(x_k,y_k)(\lambda_k)$ holds for all $k\in\N$. 
	The above considerations show that the sequence $\{\lambda_k\}_{k\in\N}$  
	needs to be bounded since $\{x^*_k\}_{k\in\N}$ is bounded.
	Now, the theorem's assertion follows from \cref{lem:boundedness_of_multipliers}.
\end{proof}

Recall that the Mordukhovich criterion provides a necessary and sufficient condition
for metric regularity of $\Phi$ at arbitrary points of its graph. 
Thus, it can be used as a sufficient
condition for AM-regularity as well.
\begin{corollary}\label{cor:Mordukhovich_criterion_sufficient_for_AMS_regularity}
	Let $\bar x\in M$ be a feasible point of \eqref{eq:basic_problem} such that
	$\ker D^*\Phi(\bar x,0)=\{0\}$. Then $\bar x$ is AM-regular.
\end{corollary}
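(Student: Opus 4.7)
The plan is to invoke two results that are already available in the paper and chain them together. Specifically, the hypothesis $\ker D^*\Phi(\bar x,0)=\{0\}$ is precisely the Mordukhovich criterion for metric regularity, which was recorded in \cref{sec:variational_analysis} as a necessary and sufficient condition for $\Phi$ to be metrically regular at $(\bar x,0)$. Hence under the assumption of the corollary, $\Phi$ is metrically regular at $(\bar x,0)$.

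Once metric regularity is in hand, the conclusion is immediate from \cref{thm:metric_regularity_feasibility_map}: that theorem shows that metric regularity of $\Phi$ at $(\bar x,0)$ implies AM-regularity of $\bar x$. So the proof reduces to two sentences: apply the Mordukhovich criterion, then apply \cref{thm:metric_regularity_feasibility_map}.

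There is no real obstacle here since both ingredients are already established earlier in the excerpt; the only thing worth remarking on is that the Mordukhovich criterion was stated in the preliminaries for an arbitrary point of $\gph\Upsilon$, and here the relevant point is $(\bar x,0)\in\gph\Phi$, which is guaranteed by $\bar x\in M$. Thus the corollary follows without further computation.
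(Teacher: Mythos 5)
Your proposal is correct and matches the paper's own (implicit) argument exactly: the corollary is stated right after \cref{thm:metric_regularity_feasibility_map} precisely because the Mordukhovich criterion $\ker D^*\Phi(\bar x,0)=\{0\}$ is equivalent to metric regularity of $\Phi$ at $(\bar x,0)$, which that theorem shows is sufficient for AM-regularity. Your remark that $(\bar x,0)\in\gph\Phi$ follows from $\bar x\in M$ is a correct and harmless addition.
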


Clearly, there exist polyhedral set-valued mappings which are not metrically regular
at all points of their graphs. In the light of \cref{thm:polyhedrality_and_AMS_regularity},
this shows that AM-regularity is generally weaker than
metric regularity.

It remains to investigate the relationship between AM-regularity and metric subregularity of
$\Phi$. The subsequently stated example depicts that metric subregularity of $\Phi$ does not
imply validity of AM-regularity.
\begin{example}\label{ex:metric_subregularity_does_not_imply_AMS_regularity}
	We set
	\[
		\forall x\in\R^2\colon\quad
		\Phi(x):=(-x_1^2+x_2,-x_2)-\R^2_-
	\]
	and consider the point $\bar x:=(0,0)$. 
	
	Using the formulas from \cref{sec:geometric_constraints}, we find
	\[
		D^*\Phi(x,y)(\lambda)
		=
		\begin{cases}\{(-2x_1\lambda_1,\lambda_1-\lambda_2)\}	
			&\lambda\in\mathcal N_{\R^2_-}(-x_1^2+x_2-y_1,-x_2-y_2)\\
			\varnothing	&\text{otherwise}
		\end{cases}
	\]
	for all $x,y,\lambda\in\R^2$.
	This yields $\mathcal M(\bar x,(0,0))=\{0\}\times\R$.
	Using the sequences given by
	\[
		\forall k\in\N\colon\quad
		x_{k,1}:=-\tfrac1k\qquad x_{k,2}:=0
		\qquad\qquad
		y_{k,1}:=-\tfrac1{k^2}\qquad y_{k,2}:=0,
	\]
	we have $x_k\to\bar x$ and $y_k\to(0,0)$ as well as 
	$(1,0)\in\mathcal M(x_k,y_k)$ for all $k\in\N$.
	Due to $(1,0)\notin\mathcal M(\bar x,(0,0))$, $\bar x$ is not an
	AM-regular point of the associated constraint set $M$.
	
	One can, however, check that Gfrerer's \emph{second-order sufficient condition for
	metric subregularity} is valid at $\bar x$, see \cite[Corollary~1]{GfrererKlatte2016},
	which shows that $\Phi$ is metrically subregular at $(\bar x,(0,0))$.
\end{example}

The next example depicts that validity of AM-regularity is not enough to ensure
metric subregularity of $\Phi$. Particularly, these conditions are independent of
each other.
\begin{example}\label{ex:AMS_regularity_does_not_imply_metric_subregularity}
	We fix 
	\[
		\forall x\in\R\colon\quad
		\Phi(x):=
		\begin{cases}
			\R	& x\leq 0\\
			[x^2,\infty)	& x>0.
		\end{cases}
	\]
	In this case, we have $M=(-\infty,0]$. Let us consider the point $\bar x:=0$.
	
	Some calculations show
	\[
		D^*\Phi(x,y)(\lambda)
		=
		\begin{cases}
			\{2x\lambda\}	&x>0,\,y=x^2,\,\lambda\geq 0\\
			\R_+			&x=0,\,y\leq 0,\,\lambda=0\\
			\{0\}			&x=y=0,\,\lambda>0\text{ or } 
							x<0,\,\lambda=0\text{ or }
							x\geq 0,\,y>x^2,\,\lambda=0\\
			\varnothing		&\text{otherwise}
		\end{cases}
	\]
	for all $x,y,\lambda\in\R$.
	This yields $\mathcal M(\bar x,0)=\R_+$, and since all other images of the coderivative
	are subsets of $\R_+$, we infer that $\bar x$ is an AM-regular point of $M$.
	
	Setting $x_k:=1/k$ for each $k\in\N$, we find $\dist(x_k,M)=1/k$ and
	$\dist(0,\Phi(x_k))=1/k^2$. Thus, taking the limit $k\to\infty$, it is clear
	that $\Phi$ is not metrically subregular at $(\bar x,0)$.
\end{example}

The proof of the upcoming result, which provides an upper estimate of
the limiting normal cone to the set $M$ at some AM-regular point 
in terms of initial problem data, is inspired by
\cite[proof of Theorem~5.9]{BoergensKanzowMehlitzWachsmuth2019}.
\begin{theorem}\label{thm:representation_of_limiting_normal_cone}
	Let $\bar x\in M$ be a feasible AM-regular point of \eqref{eq:basic_problem}.
	Then we have $\mathcal N_M(\bar x)\subset\mathcal M(\bar x,0)$.
\end{theorem}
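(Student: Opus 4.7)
The plan is to use the AM-stationarity theorem (\cref{thm:local_minimizers_AMS_points}) as a workhorse: given an arbitrary limiting normal vector $\eta\in\mathcal N_M(\bar x)$, I will construct, for each element of the defining sequence of Fréchet normals, an auxiliary optimization problem of the form \eqref{eq:basic_problem} whose local minimizer is exactly the chosen point. Applying AM-stationarity to that auxiliary problem gives approximate coderivative inclusions, and a suitable diagonal extraction produces a sequence witnessing $\eta$ in the outer limit $\limsup_{x\to\bar x,\,y\to 0}\mathcal M(x,y)$, whence AM-regularity delivers $\eta\in\mathcal M(\bar x,0)$.

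Concretely, by definition of the limiting normal cone, I first pick sequences $\{x_j\}_{j\in\N}\subset M$ with $x_j\to\bar x$ and $\{\eta_j\}_{j\in\N}\subset\R^n$ with $\eta_j\to\eta$ and $\eta_j\in\widehat{\mathcal N}_M(x_j)$. The defining property of a Fréchet normal guarantees, for any chosen scalar $\tau_j>0$, a neighborhood $U_j$ of $x_j$ such that $\eta_j^\top(x-x_j)\leq\tau_j\norm{x-x_j}$ for every $x\in M\cap U_j$. This inequality is precisely what I need to turn $x_j$ into a strict local minimizer over $M$ of the locally Lipschitz function
\[
	f_j(x):=-\eta_j^\top x+\tau_j\norm{x-x_j}+\tfrac12\norm{x-x_j}^2,
\]
since $f_j(x)-f_j(x_j)\geq\tfrac12\norm{x-x_j}^2>0$ on $M\cap U_j\setminus\{x_j\}$. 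Intersecting with a closed ball centered at $x_j$ if necessary (exactly as in the proof of \cref{thm:local_minimizers_AMS_points}) makes $x_j$ a local minimizer of the resulting instance of \eqref{eq:basic_problem}.

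Applying \cref{thm:local_minimizers_AMS_points} to each such auxiliary problem yields, for every $j$, sequences $x_{j,k}\to x_j$, $\varepsilon_{j,k}\to 0$, $y_{j,k}\to 0$, and multipliers $\lambda_{j,k}\in\R^m$ with $\varepsilon_{j,k}\in\partial f_j(x_{j,k})+D^*\Phi(x_{j,k},y_{j,k})(\lambda_{j,k})$. Since $f_j$ is the sum of a smooth function and $\tau_j\norm{\cdot-x_j}$, the elementary sum rule gives an element $v_{j,k}\in\mathbb B$ with $\partial f_j(x_{j,k})\ni -\eta_j+\tau_j v_{j,k}+(x_{j,k}-x_j)$; hence
\[
	\xi_{j,k}:=\varepsilon_{j,k}+\eta_j-\tau_j v_{j,k}-(x_{j,k}-x_j)\in D^*\Phi(x_{j,k},y_{j,k})(\lambda_{j,k})\subset\mathcal M(x_{j,k},y_{j,k}).
\]

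Now I fix $\tau_j:=1/j$ and, for each $j$, choose an index $k(j)$ so large that $\norm{x_{j,k(j)}-x_j}+\norm{\varepsilon_{j,k(j)}}+\norm{y_{j,k(j)}}\leq 1/j$. Setting $\tilde x_j:=x_{j,k(j)}$, $\tilde y_j:=y_{j,k(j)}$, and $\tilde\xi_j:=\xi_{j,k(j)}$, the triangle inequality gives $\tilde x_j\to\bar x$, $\tilde y_j\to 0$, and $\tilde\xi_j\to\eta$, while $\tilde\xi_j\in\mathcal M(\tilde x_j,\tilde y_j)$ by construction. Thus $\eta\in\limsup_{x\to\bar x,\,y\to 0}\mathcal M(x,y)$, and the assumed AM-regularity of $\bar x$ (\cref{def:AMS_regularity}) concludes that $\eta\in\mathcal M(\bar x,0)$.

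The main subtlety I anticipate is the need to introduce the $\tau_j\norm{\cdot-x_j}$ term: without it, a Fréchet normal only gives an approximate minimization property, which would not support a clean application of \cref{thm:local_minimizers_AMS_points}; at the same time, this non-smooth summand contributes a vector of norm $\tau_j$ to the subdifferential, which is why the perturbation parameter must be driven to zero jointly with the diagonal extraction. Verifying that $\tau_j v_{j,k(j)}$ and $\tilde x_j-x_j$ both disappear in the limit, so that the limit of the approximate M-stationarity data is genuinely $\eta$, is the only nontrivial point in the bookkeeping.
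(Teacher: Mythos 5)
Your proof is correct, and its overall architecture coincides with the paper's: take a limiting normal $\eta$, represent it through Fr\'{e}chet normals $\eta_j\in\widehat{\mathcal N}_M(x_j)$ at feasible points $x_j\to\bar x$, manufacture for each $j$ an auxiliary instance of \eqref{eq:basic_problem} having $x_j$ as a local minimizer, invoke \cref{thm:local_minimizers_AMS_points}, extract a diagonal sequence, and close with AM-regularity. The one step you realize differently is the construction of the auxiliary objective. The paper appeals to the smooth variational description of regular normals (Theorem~1.30(ii) in Mordukhovich's monograph), which supplies a continuously differentiable convex function $h_j$ attaining a global minimum over $M$ at $x_j$ with $\nabla h_j(x_j)=-\eta_j$; the subsequent bookkeeping then reduces to continuity of $\nabla h_j$ along the inner sequence. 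You instead work directly from the $\varepsilon$-definition of a Fr\'{e}chet normal and compensate for the merely approximate minimization property it yields by adding the nonsmooth penalty $\tau_j\norm{\cdot-x_j}$; its subgradient contribution $\tau_j v_{j,k}\in\tau_j\mathbb B$ (extracted via the exact sum rule for a smooth-plus-Lipschitz sum) is then annihilated by sending $\tau_j=1/j\to 0$ jointly with the diagonal choice of $k(j)$. Your route is more self-contained, needing only the definition of $\widehat{\mathcal N}_M$ rather than the variational description theorem, at the cost of one extra error term to track in the limit; the paper's route outsources that work and gets a cleaner limit passage. The points that genuinely needed care --- that every element of $\partial f_j(x_{j,k})$ has the form $-\eta_j+\tau_j v_{j,k}+(x_{j,k}-x_j)$, and that both $\tau_j v_{j,k(j)}$ and $\tilde x_j-x_j$ vanish so that $\tilde\xi_j\to\eta$ --- are handled correctly.
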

\begin{proof}
	Choose $x^*\in\mathcal N_M(\bar x)$ arbitrarily.
	Then we find sequences $\{x_k\}_{k\in\N}\subset M$ and $\{x_k^*\}\subset\R^n$
	such that $x_k\to\bar x$, $x_k^*\to x^*$, and $x_k^*\in\widehat{\mathcal N}_M(x_k)$
	for all $k\in\N$.
	Using the variational description of regular normals from
	\cite[Theorem~1.30(ii)]{Mordukhovich2006}, for each $k\in\N$, we find a differentiable convex
	function $h_k\colon \R^n\to\R$ which achieves a global minimum at $x_k$ when restricted to
	$M$ and which satisfies $\nabla h_k(x_k)=-x_k^*$. 
	Observe that the properties of $h_k$ already guarantee that this function is continuously
	differentiable for each $k\in\N$, see \cite[Corollary of Proposition~2.8]{Phelps1993}. 
	Applying \cref{thm:local_minimizers_AMS_points}
	for fixed $k\in\N$, we find that $x_k$ is an AM-stationary point of the
	optimization problem $\min\{h_k(x)\,|\,x\in M\}$.
	Thus, we find sequences 
	$\{x_{k,\ell}\}_{\ell\in\N},\{\varepsilon_{k,\ell}\}_{\ell\in\N}\subset\R^n$
	and
	$\{y_{k,\ell}\}_{\ell\in\N}\subset\R^m$
	such that $x_{k,\ell}\to x_k$, $\varepsilon_{k,\ell}\to 0$,
	$y_{k,\ell}\to 0$,  as $\ell\to\infty$ and 
	$\varepsilon_{k,\ell}-\nabla h_k(x_{k,\ell})\in\mathcal M(x_{k,\ell},y_{k,\ell})$ 	
	for all $\ell\in\N$ hold.
	We set $x^*_{k,\ell}:=-\nabla h_k(x_{k,\ell})$ for all $\ell\in\N$ and obtain
	$x^*_{k,\ell}\to x^*_k$ as $\ell\to\infty$ by continuous differentiability of $h_k$.
	Exploiting a standard diagonal sequence argument, we, thus, find sequences 
	$\{\bar x_k\}_{k\in\N},\{\varepsilon_k\}_{k\in\N}
	,\{\bar x^*_k\}_{k\in\N}\subset\R^n$, and
	$\{y_k\}_{k\in\N}\subset\R^m$ such that $\bar x_k\to\bar x$, $\varepsilon_k\to 0$, 
	$\bar x^*_k\to x^*$, $y_k\to 0$, and
	$\varepsilon_k+\bar x^*_k\in\mathcal M(\bar x_k,y_k)$ for all $k\in\N$
	hold. Taking the limit $k\to\infty$ and exploiting the fact that $\bar x$ is
	AM-regular, we obtain $x^*\in\mathcal M(\bar x,0)$.
\end{proof}

Let us recall that the assertion of \cref{thm:representation_of_limiting_normal_cone} can be
interpreted in the following way: Observing that $M=\Phi^{-1}(0)$ holds, 
under validity of AM-regularity at a given feasible 
point $\bar x\in M$ of \eqref{eq:basic_problem}, some kind of pre-image rule for the computation
of the limiting normal cone to $M$ in terms of initial problem data, i.e.,
the coderivative of $\Phi$, holds.
Keeping \cite[Proposition~5.3]{Mordukhovich2006} in mind, this alone is enough to show that
AM-regularity is indeed a constraint qualification which guarantees validity of M-stationarity
at the local minimizers of \eqref{eq:basic_problem}.
A similar observation can be made in the presence of metric subregularity of $\Phi$ at 
$(\bar x,0)$, and the upper estimate for the limiting normal cone to $M$ 
can be sharpened if the precise modulus of metric subregularity is known or
can be estimated from above, see \cite[Proposition~4.1]{GfrererOutrata2016b}.

We summarize our results on the relations between all mentioned qualification conditions
in \cref{fig:CQs}.
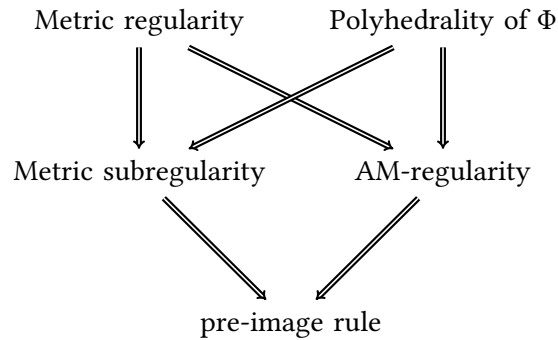
\begin{figure}[h]
\centering
\begin{tikzpicture}[->]

  \node[punkt] at (0,0) 	(A){Metric regularity};
  \node[punkt] at (4,0) 	(B){Polyhedrality of $\Phi$};
  \node[punkt] at (0,-2) 	(C){Metric subregularity};
  \node[punkt] at (4,-2)    (D){AM-regularity};
  \node[punkt] at (2,-4) 	(E){pre-image rule};

  \path     (A) edge[-implies,thick,double] node {}(C)
            (A) edge[-implies,thick,double] node {}(D)
            (B) edge[-implies,thick,double] node {}(C)
            (B) edge[-implies,thick,double] node {}(D)
            (C) edge[-implies,thick,double] node {}(E)
            (D) edge[-implies,thick,double] node {}(E);
\end{tikzpicture}
\caption{
	Relations between constraint qualifications addressing \eqref{eq:basic_problem}
	which guarantee M-stationarity of associated local minimizers.
}
\label{fig:CQs}
\end{figure}

We want to close this section with two remarks.
\begin{remark}\label{rem:generalized_AM_regularity}
	In light of \cref{thm:representation_of_limiting_normal_cone}, it might be
	reasonable to call a set-valued mapping $\Phi\colon\R^n\tto\R^m$ with a closed
	graph AM-regular at $(\bar x,\bar y)\in\gph\Phi$ whenever
	\[
		\limsup\limits_{x\to\bar x,\,y\to\bar y}\mathcal M(x,y)
		\subset
		\mathcal M(\bar x,\bar y)
	\]
	holds. 
	Indeed, this implies validity of the estimate
	\[
		\mathcal N_{\Phi^{-1}(\bar y)}(\bar x)
		\subset
		\mathcal M(\bar x,\bar y).
	\]
	Besides, this approach opens a way to apply AM-regularity in more general
	contexts ranging from applications addressing the limiting variational calculus to
	other interesting settings of optimization theory.
\end{remark}

\begin{remark}\label{rem:normal_semicontinuity}
	Let $\Phi\colon\R^n\tto\R^m$ be a set-valued mapping with a closed graph and fix
	$(\bar x,\bar y)\in\gph\Phi$. In \cite[Definition~5.69]{Mordukhovich2006}, the
	author introduces a concept called \emph{normal semicontinuity} of $\Phi$ at
	$(\bar x,\bar y)$ which demands that
	\[
		\limsup\limits_{x\to\bar x,\,y\to\bar y}\mathcal N_{\Phi(x)}(y)
		\subset
		\mathcal N_{\Phi(\bar x)}(\bar y)
	\]
	holds. This property and related results turned out to be useful in light of,
	e.g., extremal principles for set-valued mappings, multiobjective optimization, and
	optimal control, see \cite[Sections~5.3, 5.5.20, and 6.1.5]{Mordukhovich2006}.
	
	By definition, normal semicontinuity seems to be related to AM-regularity.
	However, one can easily check that both properties are independent of each other.
	Indeed, considering the data from \cref{ex:non_AM_stationary_point}, we find
	that AM-regularity fails at $\bar x:=0$ while the mapping $\Phi$ is normally
	semicontinuous at $(\bar x,0)$. On the other hand, let us revisit
	\cref{ex:AMS_regularity_does_not_imply_metric_subregularity}. 
	Therein, $\bar x:=0$ is AM-regular while $\Phi$ is not normally semicontinuous
	at $(\bar x,0)$.	
	Nevertheless, AM-regularity and normal semicontinuity
	both provide \emph{sequential stability properties} of set-valued mappings
	which is why we believe that AM-regularity could be useful in similar settings
	where normal semicontinuity turned out to be valuable.
\end{remark}

\section{Decoupling of abstract constraints}\label{sec:decoupling_abstract_constraints}

In this section, we want to investigate the particular case where the
mapping $\Phi$ is given by
\begin{equation}\label{eq:abstract_constraints}
	\forall x\in \R^n\colon\quad
	\Phi(x):=\begin{pmatrix}\Gamma(x)\\x-C\end{pmatrix}
\end{equation}
where $\Gamma\colon \R^n\tto\R^\ell$ is a set-valued mapping with closed graph
and $C\subset \R^n$ is a nonempty, closed
set. Roughly speaking, we assume that the abstract constraint set $C$ is
\emph{simple} and shall be decoupled from the more enhanced constraints which are
modeled with the aid of the generalized equation $0\in\Gamma(x)$.

Exploiting the product rule for coderivative calculus from \cref{lem:product_rule},
a feasible point $\bar x\in M$ of problem \eqref{eq:basic_problem} where $\Phi$
is given as in \eqref{eq:abstract_constraints} is M-stationary if and only if 
there is a multiplier $\tilde\lambda\in\R^\ell$ satisfying
\[
	0\in\partial f(\bar x)+D^*\Gamma(\bar x,0)(\tilde\lambda)+\mathcal N_C(\bar x).
\]
Furthermore, applying \cref{def:Asymptotic_M_stationary_point} to the situation at hand,
$\bar x$ is an AM-stationary point of the associated 
problem \eqref{eq:basic_problem} if and only if there exist sequences 
$\{x_k\}_{k\in\N},\{\varepsilon_k\}_{k\in\N},\{z_k\}_{k\in\N}\subset \R^n$ as well as
$\{\tilde y_k\}_{k\in\N},\{\tilde\lambda_k\}_{k\in\N}\subset\R^\ell$ 
satisfying $x_k\to\bar x$, $\varepsilon_k\to 0$, $\tilde y_k\to 0$, $z_k\to 0$, 
and 
\[
	\forall k\in\N\colon\quad
	\varepsilon_k\in\partial f(x_k)+D^*\Gamma(x_k,\tilde y_k)(\tilde\lambda_k)+\mathcal N_C(x_k-z_k).
\]
The relation $z_k:=0$ for all $k\in\N$ seems to be desirable since this would mean that
all points $x_k$ from above already satisfy the abstract constraint $x\in C$ hidden in
the definition of $\Phi$, i.e., some \emph{partial} feasibility of the sequence $\{x_k\}_{k\in\N}$
would be guaranteed in this situation. This motivates the subsequent definition of \emph{decoupled}
AM-stationary points.
\begin{definition}\label{def:decoupled_AMS_point}
	Let $\Phi$ be given as in \eqref{eq:abstract_constraints}.
	A feasible point $\bar x\in M$ of the associated problem \eqref{eq:basic_problem}
	is referred to as a \emph{decoupled asymptotically Mordukhovich-stationary point} 
	(dAM-stationary point
	for short) whenever there are sequences 
	$\{x_k\}_{k\in\N},\{\varepsilon_k\}_{k\in\N}\subset \R^n$ as well as
	$\{\tilde y_k\}_{k\in\N},\{\tilde\lambda_k\}_{k\in\N}\subset\R^\ell$ satisfying 
	$x_k\to\bar x$, $\varepsilon_k\to 0$,
	$\tilde y_k\to 0$,
	and 
	\[
		\forall k\in\N\colon\quad
		\varepsilon_k\in\partial f(x_k)+D^*\Gamma(x_k,\tilde y_k)(\tilde\lambda_k)+\mathcal N_C(x_k).
	\]
\end{definition}

Let us note that, by definition, the sequence $\{x_k\}_{k\in\N}\subset\R^n$ 
from \cref{def:decoupled_AMS_point}
has to satisfy $\{x_k\}_{k\in\N}\subset\dom\Gamma\cap C$.

The following theorem shows that under an additional assumption on the mapping $\Gamma$,
each local minimizer of \eqref{eq:basic_problem} with $\Phi$ given as in \eqref{eq:abstract_constraints}
is already a dAM-stationary point.

\begin{theorem}\label{thm:d_AMS_points_via_Aubin_property}
	Let $\bar x\in M$ be a local minimizer of \eqref{eq:basic_problem} where $\Phi$
	is given as in \eqref{eq:abstract_constraints}.
	Furthermore, assume that $\Gamma$ possesses the Aubin property at $(\bar x,0)$.
	Then $\bar x$ is a dAM-stationary point of \eqref{eq:basic_problem}.
\end{theorem}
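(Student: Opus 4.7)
The plan is to adapt the penalization argument of \cref{thm:local_minimizers_AMS_points} so that the constraint $x\in C$ is kept as a \emph{hard} constraint rather than being absorbed into the graph of $\Phi$. Fixing $\varepsilon>0$ small enough that $f(x)\geq f(\bar x)$ holds for all $x\in M\cap\mathbb B_\varepsilon(\bar x)$, I would, for each $k\in\N$, investigate the penalized problem
\begin{equation*}
\begin{aligned}
f(x)+\tfrac{k}{2}\norm{y}^2+\tfrac12\norm{x-\bar x}^2&\,\to\,\min\limits_{x,y}\\
(x,y)&\,\in\,\gph\Gamma\cap(C\times\R^\ell)\cap(\mathbb B_\varepsilon(\bar x)\times\mathbb B).
\end{aligned}
\end{equation*}
The feasible set is nonempty (it contains $(\bar x,0)$), closed, and bounded, and the objective is continuous, so a global minimizer $(x_k,y_k)$ exists. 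Testing against $(\bar x,0)$ and reasoning essentially verbatim as in \cref{thm:local_minimizers_AMS_points} yields $y_k\to 0$, and combining this with closedness of $\gph\Gamma$ and $C$ together with the local minimality of $\bar x$ gives $x_k\to\bar x$.

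For sufficiently large $k$, the pair $(x_k,y_k)$ lies in the interior of $\mathbb B_\varepsilon(\bar x)\times\mathbb B$. Applying the Fermat rule together with the standard subdifferential sum rule, I obtain
\begin{equation*}
(0,0)\in\partial f(x_k)\times\{0\}+\{(x_k-\bar x,ky_k)\}+\mathcal N_{\gph\Gamma\cap(C\times\R^\ell)}(x_k,y_k).
\end{equation*}
The key step, and main technical obstacle, is to split this last normal cone via a limiting intersection rule into the overestimate $\mathcal N_{\gph\Gamma}(x_k,y_k)+\mathcal N_C(x_k)\times\{0\}$. This requires the normal qualification condition
\begin{equation*}
\mathcal N_{\gph\Gamma}(x_k,y_k)\cap\bigl(-\mathcal N_C(x_k)\times\{0\}\bigr)=\{0\},
\end{equation*}
which is precisely where the Aubin assumption plays its decisive role: the Aubin property at $(\bar x,0)$ is stable under small perturbations within $\gph\Gamma$, so $\Gamma$ still possesses it at $(x_k,y_k)$ for $k$ large, and the Mordukhovich criterion then delivers $D^*\Gamma(x_k,y_k)(0)=\{0\}$. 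Any $(a,0)$ in the intersection above therefore satisfies $a\in D^*\Gamma(x_k,y_k)(0)=\{0\}$, verifying the qualification.

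With the intersection rule available, I would, for each such $k$, extract from the decomposed inclusion some $\xi_k\in\partial f(x_k)$, $\zeta_k\in\mathcal N_C(x_k)$, and $(a_k,-ky_k)\in\mathcal N_{\gph\Gamma}(x_k,y_k)$ satisfying $\bar x-x_k-\xi_k=a_k+\zeta_k$. Setting $\tilde\lambda_k:=ky_k$, this reads $a_k\in D^*\Gamma(x_k,y_k)(\tilde\lambda_k)$ and hence
\begin{equation*}
\bar x-x_k\in\partial f(x_k)+D^*\Gamma(x_k,y_k)(\tilde\lambda_k)+\mathcal N_C(x_k).
\end{equation*}
Setting $\varepsilon_k:=\bar x-x_k$ and $\tilde y_k:=y_k$, the convergences $\varepsilon_k\to 0$, $\tilde y_k\to 0$, and $x_k\to\bar x$ established above fit exactly the requirements of \cref{def:decoupled_AMS_point}, so $\bar x$ is a dAM-stationary point of \eqref{eq:basic_problem}.
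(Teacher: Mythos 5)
Your argument is correct, and it reaches the conclusion by a genuinely different route than the paper. The paper does \emph{not} keep $(x,y)\in\gph\Gamma$ as a hard constraint; instead it penalizes the generalized distance $\rho_\Gamma(x,y)$ in the objective of its auxiliary problem and retains only $x\in C$ (plus ball constraints) as constraints. There the Aubin property is used through \cref{lem:lower_semicontinuity_of_generalized_distance_function,lem:generalized_distance_function} to make $\rho_\Gamma$ locally Lipschitz near $(x_k,y_k)$ and to bound $\partial\rho_\Gamma(x_k,y_k)$ by normals to $\gph\Gamma$ at projection points $\tilde y_k\in\Pi(y_k,\Gamma(x_k))$; this forces an extra closing step, namely showing $\tilde y_k\to 0$ via inner semicontinuity of $\Gamma$. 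You instead impose $(x,y)\in\gph\Gamma\cap(C\times\R^\ell)$ directly and split $\mathcal N_{\gph\Gamma\cap(C\times\R^\ell)}(x_k,y_k)$ by the classical limiting intersection rule, verifying the normal qualification condition $\mathcal N_{\gph\Gamma}(x_k,y_k)\cap\bigl(-\mathcal N_C(x_k)\times\{0\}\bigr)=\{0\}$ via the Mordukhovich criterion $D^*\Gamma(x_k,y_k)(0)=\{0\}$; the local persistence of the Aubin property along $\gph\Gamma$ near $(\bar x,0)$, which you invoke for this, is the same stability fact the paper uses at the start of its proof. Your route buys simplicity: it avoids the distance-function machinery entirely and yields $\tilde y_k=y_k$ with no separate convergence argument, at the price of invoking the (standard, but nontrivial) intersection rule for limiting normal cones under the normal qualification condition. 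The paper's route keeps the proof structurally parallel to \cref{thm:local_minimizers_AMS_points} and relies only on subdifferential estimates for $\rho_\Gamma$ rather than on normal cone calculus for intersections. Both uses of the Aubin assumption are essential and equivalent in spirit, so I see no gap in your proposal.
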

\begin{proof}
	To start, observe that by definition of the Aubin property, we find 
	$\gamma,\delta>0$ such that $\Gamma$ possesses the Aubin property at all the points
	from $\gph\Gamma\cap(\mathbb B_\gamma(\bar x)\times\mathbb B_\delta(0))$.
	For small enough $\gamma,\delta>0$, we can also guarantee
	$\Gamma(x)\cap\mathbb B_{\delta/2}(0)\neq\varnothing$ 
	for all $x\in\mathbb B_\gamma(\bar x)$ since $\Gamma$ is inner semicontinuous
	at $(\bar x,0)$.
	
	Next, for each $k\in\N$, we investigate the optimization problem
	\begin{equation}\label{eq:penalized_problem_2}\tag{Q$(k)$}
		\begin{aligned}
			f(x)+\frac{k}{2}\left(\rho_\Gamma(x,y)+\norm{y}^2\right)+\frac12\norm{x-\bar x}^2
				&\,\to\,\min\limits_{x,y}\\
				x&\,\in\,C\cap\mathbb B_\gamma(\bar x)\\
				y&\,\in\,\mathbb B_{\delta/4}(0).
		\end{aligned}
	\end{equation}
	Observing that the objective function of this problem is lower semicontinuous by
	\cref{lem:lower_semicontinuity_of_generalized_distance_function} while its feasible set
	is nonempty and compact, \eqref{eq:penalized_problem_2} possesses a global minimizer
	$(x_k,y_k)\in \R^n\times\R^\ell$ for each $k\in\N$. Since
	$\{x_k\}_{k\in\N}$ and $\{ y_k\}_{k\in\N}$ are bounded, we may pass
	to a subsequence (without relabeling) in order to find $\tilde x\in \mathbb B_\gamma(\bar x)$ and
	$\tilde y\in\mathbb B_{\delta/4}(0)$ such that $x_k\to\tilde x$ and $ y_k\to\tilde y$.
	Similar as in the proof of \cref{thm:local_minimizers_AMS_points}, we find $\tilde y=0$.
	In analogous way, we obtain $0\in\Gamma(\tilde x)$ by lower semicontinuity of
	the generalized distance function. Finally, the closedness of $C$ guarantees $\tilde x\in C$, i.e.,
	$\tilde x\in M$. Furthermore, $\tilde x=\bar x$ can be shown as in the proof of 
	\cref{thm:local_minimizers_AMS_points}.
	
	For fixed $k\in\N$, we know $\Gamma(x_k)\cap\mathbb B_{\delta/2}(0)\neq\varnothing$
	from the choice of $\gamma$ and $\delta$.
	Due to $y_k\in\mathbb B_{\delta/4}(0)$, we have 
	$\varnothing\neq\Pi(y_k,\Gamma(x_k))\subset\mathbb B_\delta(0)$. Particularly, $\Gamma$
	possesses the Aubin property at all point from $\{x_k\}\times\Pi(y_k,\Gamma(x_k))$.
	As a consequence, \cref{lem:generalized_distance_function} guarantees that $\rho_\Gamma$
	is locally Lipschitz continuous at $(x_k,y_k)$.
	For sufficiently large $k\in\N$, $x_k$ is an interior point of $\mathbb B_\gamma(\bar x)$
	while $y_k$ is an interior point of $\mathbb B_{\delta/4}(0)$.
	Due to these observations, we may now apply \cite[Proposition~5.3]{Mordukhovich2006},
	the sum rule for the limiting subdifferential, see
	\cite[Theorem~3.36]{Mordukhovich2006}, and \cref{lem:generalized_distance_function}
	in order to find $\tilde y_k\in\Pi(y_k,\Gamma(x_k))$ such that
	\[
		(0,0)
		\in  
		\partial f(x_k)\times\{0\}
		+
		\mathcal N_{\gph\Gamma}(x_k,\tilde y_k)
		+
		\{(x_k-\bar x,ky_k)\}
		+
		\mathcal N_C(x_k)\times\{0\}
	\]
	holds for large enough $k\in\N$. 
	Defining $\tilde\lambda_k:=ky_k$ and $\varepsilon_k:=\bar x-x_k$, this yields
	\[
		\varepsilon_k\in\partial f(x_k)+D^*\Gamma(x_k,\tilde y_k)(\tilde\lambda_k)+\mathcal N_C(x_k)
	\]
	for large enough $k\in\N$.
	The above observations guarantee $\varepsilon_k\to 0$.
	It remains to show $\tilde y_k\to 0$ in order to complete the proof.
	Since $\Gamma$ is inner semicontinuous at $(\bar x,0)$, we find a sequence
	$\{\bar y_k\}_{k\in\N}\subset\R^\ell$ with $\bar y_k\to 0$ and $\bar y_k\in\Gamma(x_k)$ for
	all sufficiently large $k\in\N$. By definition of the projector, we have
	\[
		\norm{\tilde y_k}
		\leq
		\norm{\tilde y_k-y_k}+\norm{y_k}
		\leq
		\norm{\bar y_k-y_k}+\norm{y_k}
		\leq 
		2\norm{y_k}+\norm{\bar y_k}
		\to
		0,
	\]
	and this, finally, shows $\tilde y_k\to 0$.
\end{proof}

The subsequently stated example demonstrates that the statement of
\cref{thm:d_AMS_points_via_Aubin_property} does not remain true in
general when $\Gamma$ does not possess the Aubin property at the
point of interest.
\begin{example}\label{ex:non_d_AMS_local_minimizer}
	We investigate the set-valued mapping
	$\Gamma\colon\R^2\tto\R$ given by
	\[
		\forall x\in\R^2\colon\quad
		\Gamma(x)
		:=
		\begin{cases}
			[0,\infty)	&	x_2=0\\
			\varnothing	&	\text{otherwise}
		\end{cases}
	\]
	as well as the closed set $C:=\{x\in\R^2\,|\,x_1^2+(x_2-1)^2\leq 1\}$.
	We consider the associated optimization problem
	\[
		\begin{aligned}
			x_1&\,\to\,\min\\
			0&\,\in\,\Gamma(x)\\
			x&\,\in\,C.
		\end{aligned}
	\]
	Its uniquely determined feasible point
	and, thus, global minimizer is $\bar x:=(0,0)$.
	
	Assuming that $\bar x$ is a dAM-stationary point of the problem of interest and keeping
	the relation $\dom\Gamma\cap C=\{\bar x\}$ in mind, there need
	to exist sequences $\{\varepsilon_k\}_{k\in\N}\subset\R^2$, $\{\tilde y_k\}_{k\in\N}\subset\R$,
	and $\{\tilde \lambda_k\}_{k\in\N}\subset\R$ such that $\varepsilon_k\to (0,0)$ and $\tilde y_k\to 0$
	as well as
	\begin{equation}\label{eq:d_AMS_condition_example}
		\forall k\in\N\colon\quad
		(\varepsilon_{k,1},\varepsilon_{k,2})
		\in 
		(1,0)+D^*\Gamma(\bar x,\tilde y_k)(\tilde \lambda_k)+\{0\}\times\R_-
	\end{equation}
	hold true. We note, however, that
	\[
		D^*\Gamma(\bar x,\tilde y)(\tilde \lambda)=
		\begin{cases}
			\{0\}\times\R	&	\tilde y=0,\,\tilde \lambda\geq 0
								\;\text{or}\;
								\tilde y>0,\,\tilde\lambda=0\\
			\varnothing		&	\text{otherwise}			
		\end{cases}
	\]
	is valid for all $\tilde y,\tilde \lambda\in\R$. 
	Thus, \eqref{eq:d_AMS_condition_example} yields
	$\varepsilon_{k,1}=1$ for all $k\in\N$ which contradicts $\varepsilon_k\to (0,0)$.
	Thus, $\bar x$ is not a dAM-stationary point of the problem of interest.
	
	Note that $\Phi$ is a polyhedral set-valued mapping which does not
	possess the Aubin property at the reference point $(\bar x,0)$.
\end{example}
	
Keeping our arguments from \cref{sec:asymptotic_regularity} in mind,
\cref{thm:d_AMS_points_via_Aubin_property} motivates the definition
of another constraint qualification weaker than AM-regularity which
ensures that a dAM-stationary point of \eqref{eq:basic_problem} where
$\Phi$ is given as in \eqref{eq:abstract_constraints} is already an M-stationary point.
For that purpose, we introduce a set-valued mapping 
$\widetilde{\mathcal M}\colon\R^n\times\R^\ell\tto\R^n$ by
\begin{equation}\label{eq:definition_tilde_M}
	\forall x\in\R^n\,\forall \tilde y\in\R^\ell\colon\quad
	\widetilde{\mathcal M}(x,\tilde y):=\bigcup\limits_{\tilde\lambda\in\R^\ell}
	D^*\Gamma(x,\tilde y)(\tilde\lambda)+\mathcal N_C(x).
\end{equation}

\begin{definition}\label{def:d_AMS_regularity}
	A feasible point $\bar x\in M$ of \eqref{eq:basic_problem} where $\Phi$ is given as
	in \eqref{eq:abstract_constraints} is said to be 
	\emph{decoupled asymptotically Mordukhovich-regular} (dAM-regular for short) whenever
	\[
		\limsup\limits_{x\to\bar x,\,\tilde y\to 0}
		\widetilde{\mathcal M}(x,\tilde y)
		\subset
		\widetilde{\mathcal M}(\bar x,0)
	\]
	is valid.
\end{definition}

For each feasible point $\bar x\in M$ of \eqref{eq:basic_problem} for $\Phi$ given in \eqref{eq:abstract_constraints}, we have the relations 
$\mathcal M(\bar x,(0,0))=\widetilde{\mathcal M}(\bar x,0)$ and
\[
	\limsup\limits_{x\to\bar x,\,\tilde y\to 0}
	\widetilde{\mathcal M}(x,\tilde y)
	\subset
	\limsup\limits_{x\to\bar x,\,\tilde y\to 0,\,z\to 0}
	\mathcal M(x,(\tilde y,z))
\]
which is why dAM-regularity is weaker than AM-regularity as
promoted above. \cref{ex:non_d_AMS_local_minimizer} shows that there are
situations where dAM-regularity is strictly weaker than AM-regularity.
Therein, $\bar x$ is a dAM-regular point. On the other hand,
we have $(1,0)\in\limsup_{x\to\bar x,\,\tilde y\to 0,\,z\to 0}\mathcal M(x,(\tilde y,z))$
but $\mathcal M(\bar x,(0,0))=\{0\}\times\R$ which is why $\bar x$ cannot
be AM-regular. The subsequent example visualizes that dAM-regularity might be
strictly weaker than AM-regularity even in situations where $\Gamma$ possesses the
Aubin property at all points of its graph.
\begin{example}\label{ex:dAM_regularity_weaker_than_AM_regularity}
	We set $C:=\R_+$ as well as
	\[
		\forall x\in\R\colon\quad \Gamma(x):=[-x^2,\infty)
	\]
	and investigate the mapping $\Phi$ from \eqref{eq:abstract_constraints}.
	In this situation, $M=\R_+$ is valid. Let us focus on the point $\bar x:=0$.
	In \cref{ex:non_AM_stationary_point}, one can find a formula for the
	coderivative of $\Gamma$. Using it and exploiting $\mathcal N_C(0)=\R_-$, we find
	\[
		\widetilde{\mathcal M}(x,\tilde y)
		=
		\begin{cases}
			\R_-	&x>0,\,\tilde y=-x^2\;\text{or}\;x=0,\,\tilde y\geq 0\\
			\{0\}	&x>0,\,\tilde y>-x^2\\
			\varnothing	&\text{otherwise}
		\end{cases}
	\]
	for all $x,\tilde y\in\R$. Due to $\widetilde{\mathcal M}(\bar x,0)=\R_-$, $\bar x$ is
	dAM-regular.
	Let us set
	\[
		x_k:=-\tfrac{1}{k}
		\qquad
		\tilde y_k:=-\tfrac{1}{k^2}
		\qquad
		z_k:=-\tfrac{1}{k}
	\]
	for all $k\in\N$. Then we find $\mathcal M(x_k,(\tilde y_k,z_k))=\R$, i.e.,
	\[
		\limsup\limits_{x\to\bar x,\,\tilde y\to 0,\,z\to 0}\mathcal M(x,(\tilde y,z))
		=
		\R,
	\]
	and this shows that $\bar x$ cannot be AM-regular since 
	$\mathcal M(\bar x,(0,0))=\widetilde{\mathcal M}(\bar x,0)=\R_-$ holds true.
	Observing that $\Gamma$ is the sum of the locally Lipschitzian single-valued mapping
	$x\mapsto -x^2$ and the constant set $\R_+$, $\Gamma$ possesses the Aubin property
	at all points of its graph.
\end{example}

Clearly, $\bar x\in M$ is an M-stationary point of \eqref{eq:basic_problem} 
where $\Phi$ is given as in \eqref{eq:abstract_constraints} 
if and only if
\[
	\partial f(\bar x)\cap\bigl(-\widetilde{\mathcal M}(\bar x,0)\bigr)\neq\varnothing
\]
is valid. Furthermore, similar as in the proof of \cref{lem:chararcterizing_AMS_points_via_M},
one can show that whenever $\bar x$ is a dAM-stationary point of the problem of interest, then
\[
	\partial f(\bar x)\cap
	\left(
		-\limsup\limits_{x\to\bar x,\,\tilde y\to 0}
		\widetilde{\mathcal M}(x,\tilde y)
	\right)\neq\varnothing
\]
is true. Thus, \cref{thm:d_AMS_points_via_Aubin_property} yields the following result.
\begin{theorem}\label{thm:d_AMS_regularity_and_Aubin_property_CQ}
	Let $\bar x\in M$ be a dAM-regular local minimizer of \eqref{eq:basic_problem}
	where $\Phi$ is given as in \eqref{eq:abstract_constraints}.
	Furthermore, let $\Gamma$ possess the Aubin property at $(\bar x,0)$.
	Then $\bar x$ is an M-stationary point of \eqref{eq:basic_problem}.
\end{theorem}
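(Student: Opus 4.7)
The plan is to combine \cref{thm:d_AMS_points_via_Aubin_property} with the definition of dAM-regularity and the robustness of the limiting subdifferential, following the same blueprint as \cref{thm:AMS_regularity_CQ} but working with the decoupled mapping $\widetilde{\mathcal M}$ instead of $\mathcal M$.

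First, since $\bar x$ is a local minimizer and $\Gamma$ possesses the Aubin property at $(\bar x,0)$, \cref{thm:d_AMS_points_via_Aubin_property} furnishes sequences $\{x_k\}_{k\in\N},\{\varepsilon_k\}_{k\in\N}\subset\R^n$ and $\{\tilde y_k\}_{k\in\N},\{\tilde\lambda_k\}_{k\in\N}\subset\R^\ell$ with $x_k\to\bar x$, $\varepsilon_k\to 0$, $\tilde y_k\to 0$, and
\[
	\varepsilon_k\in\partial f(x_k)+D^*\Gamma(x_k,\tilde y_k)(\tilde\lambda_k)+\mathcal N_C(x_k)
\]
for all sufficiently large $k\in\N$. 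Splitting the right-hand side, I would write $\varepsilon_k=x_k^*+w_k$ with $x_k^*\in\partial f(x_k)$ and $w_k\in\widetilde{\mathcal M}(x_k,\tilde y_k)$.

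The key observation is that $\{x_k^*\}_{k\in\N}$ is bounded. This follows from the local Lipschitz continuity of $f$ via \cite[Corollary~1.81]{Mordukhovich2006}, exactly as in the proof of \cref{lem:chararcterizing_AMS_points_via_M}. Passing to a subsequence, I may assume $x_k^*\to x^*$, and by robustness of the limiting subdifferential one has $x^*\in\partial f(\bar x)$. Then $w_k=\varepsilon_k-x_k^*\to-x^*$, and since $w_k\in\widetilde{\mathcal M}(x_k,\tilde y_k)$ with $x_k\to\bar x$ and $\tilde y_k\to 0$, by definition of the Painlev\'{e}--Kuratowski outer limit we obtain
\[
	-x^*\in\limsup\limits_{x\to\bar x,\,\tilde y\to 0}\widetilde{\mathcal M}(x,\tilde y).
\]

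Invoking dAM-regularity, this yields $-x^*\in\widetilde{\mathcal M}(\bar x,0)$, so there exists $\tilde\lambda\in\R^\ell$ with $-x^*\in D^*\Gamma(\bar x,0)(\tilde\lambda)+\mathcal N_C(\bar x)$. Together with $x^*\in\partial f(\bar x)$, this gives
\[
	0\in\partial f(\bar x)+D^*\Gamma(\bar x,0)(\tilde\lambda)+\mathcal N_C(\bar x),
\]
and the product rule \cref{lem:product_rule} translates this precisely into the M-stationarity condition \eqref{eq:M_Stationarity} for the decoupled mapping $\Phi$ from \eqref{eq:abstract_constraints}. I do not foresee any real obstacle: the proof is a straightforward adaptation of \cref{thm:AMS_regularity_CQ}, with the boundedness of the subgradient sequence being the only quantitative ingredient and dAM-regularity handling the limit passage in the coderivative term.
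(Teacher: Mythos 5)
Your proposal is correct and follows essentially the same route as the paper: the paper deduces the result by combining \cref{thm:d_AMS_points_via_Aubin_property} with the observation that dAM-stationarity yields $\partial f(\bar x)\cap\bigl(-\limsup_{x\to\bar x,\,\tilde y\to 0}\widetilde{\mathcal M}(x,\tilde y)\bigr)\neq\varnothing$ (argued ``similar as in the proof of \cref{lem:chararcterizing_AMS_points_via_M}'') and then invoking dAM-regularity. You have merely written out explicitly the subgradient-boundedness and limit-passage steps that the paper leaves to that analogy, so there is nothing to correct.
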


Using the product rule from \cref{lem:product_rule} as well as the result of
\cref{cor:Mordukhovich_criterion_sufficient_for_AMS_regularity}, the
condition
\begin{equation}\label{eq:Mordukhovich_criterion_decoupled_case}
	0\in D^*\Gamma(\bar x,0)(y^*)+z^*,\,z^*\in\mathcal N_C(\bar x)
	\quad\Longrightarrow\quad
	y^*=0,\,z^*=0
\end{equation}
is sufficient for AM-regularity and, thus, dAM-regularity of a feasible
point $\bar x\in M$ of \eqref{eq:basic_problem} where $\Phi$ is given as in \eqref{eq:abstract_constraints}.

Finally, we would like to mention that the assertion of 
\cref{thm:representation_of_limiting_normal_cone} also holds true in the
present setting under validity of dAM-regularity whenever $\Gamma$ possesses
the Aubin property at the point of interest.
\begin{theorem}\label{thm:representation_limiting_normal_cone_d_AMS_regularity}
	Let $\bar x\in M$ be a feasible dAM-regular point of \eqref{eq:basic_problem}
	where $\Phi$ is given as in \eqref{eq:abstract_constraints}.
	Furthermore, let $\Gamma$ possess the Aubin property at $(\bar x,0)$.
	Then we have $\mathcal N_M(\bar x)\subset\widetilde{\mathcal M}(\bar x,0)$.
\end{theorem}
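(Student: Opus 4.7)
The plan is to mimic the proof of \cref{thm:representation_of_limiting_normal_cone}, but to feed each approximating minimization problem into \cref{thm:d_AMS_points_via_Aubin_property} instead of \cref{thm:local_minimizers_AMS_points}, and then to close the argument with dAM-regularity instead of AM-regularity.

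More precisely, fix an arbitrary $x^*\in\mathcal{N}_M(\bar x)$ and use the very definition of the limiting normal cone to produce sequences $\{x_k\}_{k\in\N}\subset M$ and $\{x_k^*\}_{k\in\N}\subset\R^n$ with $x_k\to\bar x$, $x_k^*\to x^*$, and $x_k^*\in\widehat{\mathcal{N}}_M(x_k)$ for every $k\in\N$. The variational description of regular normals from \cite[Theorem~1.30(ii)]{Mordukhovich2006}, sharpened via \cite[Corollary of Proposition~2.8]{Phelps1993}, then yields a continuously differentiable convex function $h_k\colon\R^n\to\R$ which achieves its global minimum over $M$ at $x_k$ and satisfies $\nabla h_k(x_k)=-x_k^*$. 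Thus $x_k$ is a local minimizer of the problem $\min\{h_k(x)\,|\,0\in\Phi(x)\}$ with $\Phi$ given as in \eqref{eq:abstract_constraints}.

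Next I would invoke \cref{thm:d_AMS_points_via_Aubin_property} for each such problem. The key point to check here is that $\Gamma$ possesses the Aubin property not only at $(\bar x,0)$ but also at every $(x_k,0)\in\gph\Gamma$ for sufficiently large $k$: this follows from the fact that the Aubin property, by its very neighborhood-based definition, automatically transfers to nearby points of the graph of $\Gamma$, and $(x_k,0)\to(\bar x,0)$. Applying \cref{thm:d_AMS_points_via_Aubin_property} at each such $x_k$ produces sequences $\{x_{k,\ell}\}_{\ell\in\N},\{\varepsilon_{k,\ell}\}_{\ell\in\N}\subset\R^n$ and $\{\tilde y_{k,\ell}\}_{\ell\in\N},\{\tilde\lambda_{k,\ell}\}_{\ell\in\N}\subset\R^\ell$ with $x_{k,\ell}\to x_k$, $\varepsilon_{k,\ell}\to 0$, $\tilde y_{k,\ell}\to 0$ as $\ell\to\infty$ together with
\[
	\varepsilon_{k,\ell}-\nabla h_k(x_{k,\ell})\in \widetilde{\mathcal{M}}(x_{k,\ell},\tilde y_{k,\ell}).
\]
Setting $x_{k,\ell}^*:=-\nabla h_k(x_{k,\ell})$ and using continuous differentiability of $h_k$, we obtain $x_{k,\ell}^*\to x_k^*$ as $\ell\to\infty$.

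A standard diagonal extraction now produces sequences $\{\bar x_k\}_{k\in\N},\{\varepsilon_k\}_{k\in\N},\{\bar x_k^*\}_{k\in\N}\subset\R^n$ and $\{\tilde y_k\}_{k\in\N}\subset\R^\ell$ with $\bar x_k\to\bar x$, $\varepsilon_k\to 0$, $\bar x_k^*\to x^*$, $\tilde y_k\to 0$, and $\varepsilon_k+\bar x_k^*\in\widetilde{\mathcal{M}}(\bar x_k,\tilde y_k)$ for all $k\in\N$. Passing to the limit $k\to\infty$ and exploiting the dAM-regularity of $\bar x$, we obtain $x^*\in\widetilde{\mathcal{M}}(\bar x,0)$, which is what we wanted to show. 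The main obstacle in this argument is the bookkeeping around the Aubin property: one has to confirm that it persists from $(\bar x,0)$ to the nearby points $(x_k,0)$ so that \cref{thm:d_AMS_points_via_Aubin_property} is legitimately applicable to the auxiliary problems; everything else is essentially a transcription of the proof of \cref{thm:representation_of_limiting_normal_cone} with $\mathcal{M}$ replaced by $\widetilde{\mathcal{M}}$.
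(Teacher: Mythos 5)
Your proposal is correct and follows exactly the route the paper intends: the paper's own proof is a one-line reference to the argument of \cref{thm:representation_of_limiting_normal_cone}, replacing \cref{thm:local_minimizers_AMS_points} by \cref{thm:d_AMS_points_via_Aubin_property} and noting, as you do, that the Aubin property of $\Gamma$ persists at all points of $\gph\Gamma$ in a neighborhood of $(\bar x,0)$ so that the theorem is applicable at the auxiliary minimizers $x_k$. The only (cosmetic) blemish is the clash between the index $\ell$ in $\tilde y_{k,\ell}$ and the dimension $\ell$ of the image space of $\Gamma$.
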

\begin{proof}
	The proof is analogous to the one of \cref{thm:representation_of_limiting_normal_cone}
	exploiting \cref{thm:d_AMS_points_via_Aubin_property} and the fact that $\Gamma$
	possesses the Aubin property at all points from $\gph\Gamma\cap U$ where 
	$U\subset\R^n\times\R^\ell$ is a sufficiently small neighborhood of
	$(\bar x,0)$.
\end{proof}

\section{Applications of asymptotic regularity}\label{sec:applications}
\subsection{Asymptotic regularity for mathematical programs with geometric constraints}\label{sec:geometric_constraints}

In this section, we assume that $\Phi\colon \R^n\rightrightarrows \R^\ell\times\R^n$ is given by 
\begin{equation}\label{eq:geometric_constraints}
	\forall x\in \R^n\colon\quad 
	\Phi(x)
	:=
		\begin{pmatrix}
			G(x)-K\\
			x-C
		\end{pmatrix}
\end{equation}
where $G\colon \R^n\rightarrow\R^\ell$ is a locally Lipschitz continuous, single-valued mapping, 
while the sets $K\subset\R^\ell$  and $C\subset \R^n$ are nonempty as well as closed. 
Thus, the associated feasible region of
\eqref{eq:basic_problem} is given by 
\begin{equation}\label{eq:standard_preimage_constraints}
	M=\{x\in C\,|\,G(x)\in K\},
\end{equation}
and this rather general description still covers numerous interesting classes
of optimization problems comprising standard nonlinear problems, instances
of conic programming, disjunctive programs (e.g., mathematical problems with
complementarity, vanishing, switching, or cardinality constraints, see
\cref{sec:disjunctive_programming}), and
conic complementarity programming. Generally, one refers to constraint systems
of this type as \emph{geometric constraints}.

We observe that the structure of $\Phi$ is precisely the one discussed in
\cref{sec:decoupling_abstract_constraints} if we use
the feasibility mapping $\Gamma\colon \R^n\tto\R^\ell$ given by $\Gamma(x):=G(x)-K$
for all $x\in \R^n$.
Observing that $G$ is a locally Lipschitz
continuous map, $\Gamma$ possesses the
Aubin property at each point of its graph.
Due to \cref{thm:d_AMS_points_via_Aubin_property,thm:d_AMS_regularity_and_Aubin_property_CQ}, the
local minimizers of the underlying optimization problem are always dAM-stationary points
and dAM-regularity provides a constraint qualification for the presence of
M-stationarity.
Using the coderivative sum rule from 
\cite[Theorem~1.62]{Mordukhovich2006}, we have
\[
		\forall (x,\tilde y)\in\gph\Gamma\;\forall\tilde\lambda\in \R^\ell
		\colon\quad
		D^*\Gamma(x,\tilde y)(\tilde\lambda)
		=
		\begin{cases}
			D^*G(x)(\tilde\lambda)	&\tilde\lambda\in\mathcal N_K(G(x)-\tilde y)\\
			\varnothing			&\text{otherwise.}
		\end{cases}
\]
Particularly, the mapping $\widetilde{\mathcal M}\colon \R^n\times\R^\ell\tto \R^n$ 
from \eqref{eq:definition_tilde_M} takes the form
\[
	\forall x\in\R^n\,\forall \tilde y\in\R^\ell\colon\quad
	\widetilde{\mathcal M}(x,\tilde y)=D^*G(x)\mathcal N_K(G(x)-\tilde y)+\mathcal N_C(x).
\]
The latter can be used to specify the precise nature of dAM-regularity
for particular classes of optimization problems with geometric constraints.
We also note that validity of this constraint qualification at an
arbitrary point $\bar x\in M$ yields the estimate
\[
	\mathcal N_M(\bar x)
	\subset
	D^*G(\bar x)\mathcal N_K(G(\bar x))+\mathcal N_C(\bar x),
\]
see \cref{thm:representation_limiting_normal_cone_d_AMS_regularity}.
In the literature, metric subregularity of $\Phi$ from \eqref{eq:geometric_constraints} 
at $(\bar x,(0,0))$ is often assumed for that purpose, see e.g.\
\cite[Theorem~4.1]{HenrionJouraniOutrata2002} where it is shown that
already metric subregularity of $\widehat\Phi\colon\R^n\tto\R^\ell$ given by
\[
	\forall x\in\R^n\colon\quad
	\widehat\Phi(x):=
	\begin{cases}
		G(x)-K&x\in C\\\varnothing&\text{otherwise}
	\end{cases}
\]
at the point $(\bar x,0)$ is enough for that purpose. 
In the light of
\cref{sec:asymptotic_regularity}, dAM-regularity is, however, independent of 
the metric subregularity of $\Phi$ and, thus, provides a different approach
to this pre-image rule. 
In case where $G$ is smooth, $C=\R^n$, and $K$ is of special structure,
the fact that dAM-regularity provides a constraint
qualification has been observed in \cite[Theorem~3.13]{Ramos2019}.
A related observation has been made in the context of semidefinite
programming in \cite{AndreaniHaeserViana2020}.
Replacing the image space $\R^\ell$ by the Hilbert space of all real symmetric 
matrices, this paper's theory covers this special situation, too.
Under additional assumptions on the data (e.g., convexity of $K$ and $C$),
related results can be obtained for optimization problems in Banach spaces as well,
see \cite{BoergensKanzowMehlitzWachsmuth2019} and \cref{rem:convexity} below.
It follows from \cite[Section~4]{Ramos2019} that dAM-regularity for feasible sets
of type \eqref{eq:standard_preimage_constraints}
is not related to suitable notions of \emph{pseudo-} and
\emph{quasinormality} which apply to feasible sets of type 
\eqref{eq:standard_preimage_constraints}, see 
\cite[Definition~3.4]{BenkoCervinkaHoheisel2019} and \cite[Definition~4.2]{GuoYeZhang2013}
as well.
On the other hand, we know from our investigations in the earlier sections that
this new constraint qualification is generally weaker than metric regularity of
$\Phi$ from \eqref{eq:geometric_constraints} at some point $(\bar x,(0,0))\in\gph\Phi$, and
the latter is equivalent to
\[
	-G'(\bar x)^\top\tilde\lambda\in\mathcal N_C(\bar x),\,\tilde\lambda\in\mathcal N_K(G(\bar x))
	\quad\Longrightarrow\quad
	\tilde\lambda=0
\]
in case where $G$ is continuously differentiable at $\bar x$, see
\eqref{eq:Mordukhovich_criterion_decoupled_case} as well. This condition
is well known as \emph{no nonzero abnormal multiplier constraint qualification} (NNAMCQ)
or \emph{generalized Mangasarian--Fromovitz constraint qualification} (GMFCQ) in the
literature.

In the subsequently stated example, we interrelate our findings with the results from
\cite{AndreaniMartinezRamosSilva2016} where a sequential constraint qualification has
been introduced for standard nonlinear programs.
\begin{example}\label{ex:recovering_CCP}
	Fix $\ell:=p+q$, $K:=\R_-^p\times\{0\}$, as well as $C:=\R^n$ and
	let the mapping $G\colon\R^n\to\R^{p+q}$ be continuously differentiable.
	Furthermore, let $G_1,\ldots,G_{p+q}\colon\R^n\to\R$ be the component mappings
	associated with $G$. In this situation, the mapping $\widetilde{\mathcal M}$ from
	above takes the particular form
	\[
		\widetilde{\mathcal M}(x,\tilde y)
		=
		\left\{
			\sum_{i=1}^{p+q}\tilde\lambda_i\nabla G_i(x)
			\,\middle|\,
			\begin{aligned}
				\min(\tilde\lambda_i,\tilde y_i-G_i(x))&=0&&\forall i\in\{1,\ldots,p\}\\
				\tilde y_i-G_i(x)&=0&&\forall i\in\{p+1,\ldots,q\}
			\end{aligned}
		\right\}
	\]
	for all $x\in\R^n$ and $\tilde y\in\R^{p+q}$.
	We want to compare the associated AM-regularity condition (which equals dAM-regularity
	due to $C=\R^n$) with the so-called \emph{cone-continuity property} (CCP for short) from 
	\cite[Definition~3.1]{AndreaniMartinezRamosSilva2016} which has been shown to be a
	constraint qualification for standard nonlinear problems. It is based on 
	the mapping $\mathcal K\colon\R^n\tto\R^n$ given by
	\[
		\forall x\in\R^n\colon\quad 
		\mathcal K(x):=
		\left\{
			\sum\limits_{i=1}^{p+q}\lambda_i\nabla G_i(x)\,\middle|\,
			\min(\lambda_i,-G_i(\bar x))=0\quad\forall i\in\{1,\ldots,p\}
		\right\}
	\]
	and demands that
	\[
		\limsup\limits_{x\to\bar x}\mathcal K(x)\subset\mathcal K(\bar x)
	\]
	holds at a given point $\bar x\in M$. 
	Note that we have $\widetilde{\mathcal M}(\bar x,0)=\mathcal K(\bar x)$. 
	
	Observing that, for each $x\in\R^n$, 
	we have $\mathcal K(x)=\widetilde{\mathcal M}(x,G(x)-G(\bar x))$ 
	while the convergence $G(x)-G(\bar x)\to 0$ holds as $x\to\bar x$, 
	validity of AM-regularity at $\bar x$
	yields that CCP holds at $\bar x$, too. 
	On the other hand, let CCP hold at $\bar x$.
	If $\{x_k\}_{k\in\N},\{x_k^*\}_{k\in\N}\subset\R^n$ as well as 
	$\{\tilde y_k\}_{k\in\N}\subset\R^{p+q}$
	are sequences with $x_k\to\bar x$, $x_k^*\to x^*$ for some $x^*\in\R^n$, and
	$\tilde y_k\to 0$ such that $x_k^*\in\widetilde{\mathcal M}(x_k,\tilde y_k)$ holds for each $k\in\N$,
	then we find a sequence $\{\tilde\lambda_k\}_{k\in\N}\subset\R^{p+q}$ such that
	$x_k^*=\sum_{i=1}^{p+q}\tilde\lambda_{k,i}\nabla G_i(x_k)$ and
	$\min(\tilde\lambda_{k,i},\tilde y_{k,i}-G_i(x_k))=0$, $i=1,\ldots,p$, are valid for all $k\in\N$.
	Let $I(\bar x):=\{i\in\{1,\ldots,p\}\,|\,G_i(\bar x)=0\}$ denote the set of indices
	associated with inequality constraints active at $\bar x$. 
	For each $k\in\N$, we have $\tilde\lambda_{k,i}\geq 0$ for all $i\in\{1,\ldots,p\}$.
	Whenever $i\notin I(\bar x)$ holds, $\tilde y_{k,i}-G_i(x_k)>0$ is valid for sufficiently large
	$k\in\N$ due to $x_k\to\bar x$, $\tilde y_k\to 0$, and continuity of $G$. 
	Thus, we have $\tilde\lambda_{k,i}=0$ for
	sufficiently large $k\in \N$ and all $i\notin I(\bar x)$. 
	This particularly yields $\min(\tilde\lambda_{k,i},-G_i(\bar x))=0$ for sufficiently large
	$k\in\N$ and all $i\in\{1,\ldots,p\}$. 
	Hence, we have shown $x^*_k\in\mathcal K(x_k)$ for sufficiently large
	$k\in\N$. By validity of CCP, $x^*\in\mathcal K(\bar x)=\widetilde{\mathcal M}(\bar x,0)$
	follows, i.e., $\bar x$ is AM-regular.
	
	The above investigations show that AM-regularity is equivalent to CCP in the setting of
	standard nonlinear programming. Let us mention that CCP has also been referred to as
	AKKT-regularity in the literature which is why the latter is a particular instance
	of AM-regularity as well.
\end{example}

In the subsequent remark, we address the situation where $K\subset\R^\ell$
is convex and $G$ is continuously differentiable.
\begin{remark}\label{rem:convexity}
	Assume that $K\subset\R^\ell$ is convex while $G$ is
	continuously differentiable.
	Adapting the proof of \cite[Proposition~3.3]{BoergensKanzowMehlitzWachsmuth2019},
	whenever $\bar x\in M$ is a local minimizer of the associated problem
	\eqref{eq:basic_problem} where $\Phi$ is given as in \eqref{eq:geometric_constraints},
	we find sequences $\{x_k\}_{k\in\N}\subset\R^n$ and $\{\varepsilon_k\}_{k\in\N}\subset\R^n$
	such that $x_k\to\bar x$, $\varepsilon_k\to 0$, and
	\[
		\forall k\in\N\colon\quad
		\varepsilon_k\in\partial f(x_k)+G'(x_k)^\top(K-G(x_k))^\circ+\mathcal N_C(x_k)
	\]
	hold. Note, however, that we cannot replace $(K-G(x_k))^\circ$ by $\mathcal N_K(G(x_k))$ in the
	above formula since $G(x_k)$ does not need to be an element of $K$ in general.
	Consequently, this sequential concept of stationarity is slightly different from 
	dAM-stationarity. However, it can be used in similar fashion for the derivation of a
	constraint qualification which guarantees that $\bar x$ satisfies
	the M-stationarity conditions of the associated optimization problem, namely
	\[
		\limsup\limits_{x\to\bar x}\widehat{\mathcal M}(x)
		\subset
		\widehat{\mathcal M}(\bar x)
	\]
	where $\widehat{\mathcal M}\colon\R^n\tto\R^n$ is defined by
	\[
		\forall x\in\R^n\colon\quad
		\widehat{\mathcal M}(x):=G'(x)^\top(K-G(x))^\circ+\mathcal N_C(x),
	\]
	see \cite[Corollary~4.8]{BoergensKanzowMehlitzWachsmuth2019} as well.
\end{remark}

\subsection{Asymptotic regularity in disjunctive programming}
\label{sec:disjunctive_programming}

In this section, we take a closer look at
\emph{mathematical programs with disjunctive constraints}
which are optimization problems of the form
\begin{equation}\label{eq:disjunctive_program}\tag{MPDC}
	\begin{aligned}
		f(x)&\,\to\,\min\\
		G(x)&\,\in\,K
	\end{aligned}
\end{equation}
where $f\colon\R^n\to\R$ and
$G\colon\R^n\to\R^m$ are continuously differentiable mappings and
$K:=\bigcup_{i=1}^pD_i$ is the union of finitely many convex polyhedral sets
$D_1,\ldots,D_p\subset\R^m$. 
Again, we denote its feasible set by $M$.
Such optimization problems have been dealt with
e.g.\ in
\cite{BenkoCervinkaHoheisel2019,BenkoGfrerer2018,FlegelKanzowOutrata2007,Gfrerer2014,Mehlitz2019b}
in terms of first- and second-order optimality conditions as well as suitable constraint
qualifications. The model \eqref{eq:disjunctive_program} is attractive since it
covers numerous classes from structured nonlinear optimization like
mathematical programs with complementarity constraints (MPCCs), mathematical programs with
vanishing constraints (MPVCs), mathematical programs with switching constraints (MPSCs), or
cardinality-constrained mathematical problems (CCMPs), see \cite[Section~5]{Mehlitz2019b}
for an overview of these popular classes from disjunctive programming and references to the literature.
Noting that \eqref{eq:disjunctive_program} is a particular instance of a mathematical program
with geometric constraints, we are in position to apply the theory from above
to the problem of interest. Noting that no abstract constraints are present in
the formulation of \eqref{eq:disjunctive_program}, we rely on AM-regularity as a constraint
qualification for \eqref{eq:basic_problem}. 
The associated mapping $\mathcal M\colon\R^n\times\R^m\tto\R^n$ is given by
\[
	\forall x\in\R^n\,\forall y\in\R^m\colon\quad
	\mathcal M(x,y)=G'(x)^\top\mathcal N_K(G(x)-y)
\]
in this setting.

Our first result, which is inspired by our observations from \cref{ex:recovering_CCP},
shows that we can rely on the continuity properties of a much simpler map than $\mathcal M$ 
in order to check validity of AM-regularity. 
The proof of this result exploits some arguments we already used to verify
\cref{thm:polyhedrality_and_AMS_regularity}.
\begin{theorem}\label{thm:AM_regularity_for_disjunctive_programs}
	Fix a feasible point $\bar x\in M$ of \eqref{eq:disjunctive_program} and define
	a set-valued mapping $\mathcal K\colon\R^n\tto\R^n$ by means of
	\[
		\forall x\in\R^n\colon\quad 
		\mathcal K(x):=G'(x)^\top\mathcal N_K(G(\bar x)).
	\]
	Then $\bar x$ is AM-regular if and only if the following condition holds:
	\begin{equation}\label{eq:AM_regularity_for_disjunctive_programs}
		\limsup\limits_{x\to\bar x}\mathcal K(x)\subset\mathcal K(\bar x).
	\end{equation}
\end{theorem}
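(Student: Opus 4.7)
My plan is to prove the two inclusions separately, exploiting the product form $\mathcal M(x,y)=G'(x)^\top\mathcal N_K(G(x)-y)$ together with the fact that $K$ is a finite union of convex polyhedra. Note first that $\mathcal M(\bar x,0)=G'(\bar x)^\top\mathcal N_K(G(\bar x))=\mathcal K(\bar x)$, so both conditions concern inclusion into the same target set.

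For the ``only if'' direction I would use the substitution $y:=G(x)-G(\bar x)$, which yields $G(x)-y=G(\bar x)$ and hence the identity $\mathcal K(x)=\mathcal M(x,G(x)-G(\bar x))$. Continuity of $G$ ensures $G(x_k)-G(\bar x)\to 0$ whenever $x_k\to\bar x$; consequently, any realisation $\xi_k\in\mathcal K(x_k)=\mathcal M(x_k,G(x_k)-G(\bar x))$ with $\xi_k\to\xi$ exhibits $\xi\in\limsup_{x\to\bar x,\,y\to 0}\mathcal M(x,y)$. AM-regularity of $\bar x$ then forces $\xi\in\mathcal M(\bar x,0)=\mathcal K(\bar x)$.

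The ``if'' direction is the more substantial one. Given sequences $x_k\to\bar x$, $y_k\to 0$, and $x_k^*\to x^*$ with $x_k^*\in\mathcal M(x_k,y_k)$, I would select multipliers $\lambda_k\in\mathcal N_K(G(x_k)-y_k)$ representing $x_k^*=G'(x_k)^\top\lambda_k$. The goal is to show that, along a subsequence, $\lambda_k$ already lies in the fixed cone $\mathcal N_K(G(\bar x))$; this gives $x_k^*\in\mathcal K(x_k)$, and the hypothesis $\limsup_{x\to\bar x}\mathcal K(x)\subset\mathcal K(\bar x)$ applies directly to yield $x^*\in\mathcal K(\bar x)=\mathcal M(\bar x,0)$. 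Writing $z_k:=G(x_k)-y_k\to G(\bar x)$ and using the representation $\mathcal N_K(z)=\bigcup_{i\,:\,z\in D_i}\mathcal N_{D_i}(z)$ available for unions of finitely many convex polyhedra, the finiteness of the index set $\{1,\ldots,p\}$ lets me pass to a subsequence on which a single index $i$ witnesses $\lambda_k\in\mathcal N_{D_i}(z_k)$ with $z_k\in D_i$; closedness of $D_i$ then yields $G(\bar x)\in D_i$.

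The principal obstacle is thus the inclusion $\mathcal N_{D_i}(z_k)\subset\mathcal N_{D_i}(G(\bar x))$ for large $k$. I would establish it via the face structure of the convex polyhedron $D_i$: the normal cone mapping is constant on the relative interior of each of the finitely many faces of $D_i$, so along a further subsequence the minimal face $F^\star$ of $D_i$ containing $z_k$ is the same for all $k$. Closedness of $F^\star$ then forces $G(\bar x)\in F^\star$, whence the minimal face $F_0$ of $D_i$ containing $G(\bar x)$ satisfies $F_0\subset F^\star$; this inclusion of faces translates into the reverse inclusion $\mathcal N_{D_i}(z_k)\subset\mathcal N_{D_i}(G(\bar x))$ of normal cones, completing the argument as indicated above.
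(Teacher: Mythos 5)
Your forward implication coincides with the paper's: both substitute $y:=G(x)-G(\bar x)$, observe $\mathcal K(x)=\mathcal M(x,G(x)-G(\bar x))$, and invoke AM-regularity together with continuity of $G$. The backward implication, however, has a genuine gap at its final step. Your face argument correctly yields $\lambda_k\in\mathcal N_{D_i}(z_k)\subset\mathcal N_{D_i}(G(\bar x))$ along a subsequence, but what you need is $\lambda_k\in\mathcal N_K(G(\bar x))$, and the passage from $\mathcal N_{D_i}(G(\bar x))$ to $\mathcal N_K(G(\bar x))$ rests on the claimed identity $\mathcal N_K(z)=\bigcup_{i\colon z\in D_i}\mathcal N_{D_i}(z)$. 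Only the inclusion ``$\subset$'' of that identity is true in general; the reverse inclusion, which is the one you implicitly use at the limit point, fails as soon as several $D_i$ meet at $G(\bar x)$. For instance, with $D_1:=\R_-\times\{0\}$, $D_2:=\{0\}\times\R_-$, and $\bar z:=(0,0)$ one has $(1,-1)\in\mathcal N_{D_1}(\bar z)=\R_+\times\R$, whereas $\mathcal N_K(\bar z)=(\{0\}\times\R)\cup(\R\times\{0\})\cup\R_+^2$ does not contain $(1,-1)$. So landing in $\mathcal N_{D_i}(G(\bar x))$ does not place $\lambda_k$ in $\mathcal N_K(G(\bar x))$, and without that you cannot assert $x_k^*\in\mathcal K(x_k)$ and apply \eqref{eq:AM_regularity_for_disjunctive_programs}.

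The conclusion you are after is nevertheless correct, and the repair is to stay with $\mathcal N_K$ itself rather than decomposing into the individual $D_i$ at the limit point. Since $K$ is a finite union of convex polyhedral sets, the map $z\mapsto\mathcal N_K(z)$ takes only finitely many values, each a finite union of convex polyhedral cones; hence along a subsequence $\mathcal N_K(z_k)$ equals a fixed cone $N$ with $\lambda_k\in N$ for all $k$. Applying the robustness of the limiting normal cone, $\limsup_{z\to G(\bar x)}\mathcal N_K(z)=\mathcal N_K(G(\bar x))$, to the constant sequence of any fixed $\mu\in N$ gives $N\subset\mathcal N_K(G(\bar x))$, hence $\lambda_k\in\mathcal N_K(G(\bar x))$ and $x_k^*\in\mathcal K(x_k)$, after which \eqref{eq:AM_regularity_for_disjunctive_programs} finishes the proof. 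This is exactly how the paper argues: it extracts a fixed polyhedral cone $P$ with $\lambda_k\in P\subset\mathcal N_K(G(x_k)-y_k)$ along a subsequence, mirroring the proof of \cref{thm:polyhedrality_and_AMS_regularity}, and then uses robustness. Once this is in place, your face-structure observations for the individual $D_i$ are not needed.
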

\begin{proof}
	We show both implications separately.\\
	$[\Longrightarrow]$ Let $\bar x$ be AM-regular.
		Then we have
		\[
			\limsup\limits_{x\to\bar x}\mathcal K(x)
			=
			\limsup\limits_{x\to\bar x}\mathcal M(x,G(x)-G(\bar x))
			\subset
			\limsup\limits_{x\to\bar x,\,y\to 0}\mathcal M(x,y)
			\subset
			\mathcal M(\bar x,0)
			=
			\mathcal K(\bar x)
		\]
		by continuity of $G$.\\
	$[\Longleftarrow]$ Assume that \eqref{eq:AM_regularity_for_disjunctive_programs} holds.
		Furthermore, choose $\{x_k\}_{k\in\N},\{x_k^*\}_{k\in\N}\subset\R^n$ and
		$\{y_k\}_{k\in\N}\subset\R^m$ such that $x_k\to\bar x$, $x_k^*\to x^*$ for some 
		$x^*\in\R^n$, $y_k\to 0$, as well as 
		$x^*_k\in G'(x_k)^\top\mathcal N_K(G(x_k)-y_k)$ for all $k\in\N$ hold.
		Then we find a sequence $\{\lambda_k\}_{k\in\N}\subset\R^m$ such that
		$x^*_k=G'(x_k)^\top\lambda_k$ and $\lambda_k\in\mathcal N_K(G(x_k)-y_k)$ are valid
		for all $k\in\N$. Exploiting that $K$ is the union of finitely many convex polyhedral
		sets, we can use similar arguments as in the proof of \cref{thm:polyhedrality_and_AMS_regularity}
		in order to find a convex, polyhedral cone $P\subset\R^m$ 
		which satisfies $P\subset\mathcal N_K(G(x_k)-y_k)$ and
		$\lambda_k\in P$ along a subsequence (without relabeling).
		The robustness of the limiting normal cone yields $P\subset\mathcal N_K(G(\bar x))$
		due to $G(x_k)-y_k\to G(\bar x)$ as 
		$k\to\infty$. Thus, we have $\{\lambda_k\}_{k\in\N}\subset\mathcal N_K(G(\bar x))$
		and, consequently, $x^*_k\in\mathcal K(x_k)$ for all $k\in\N$.
		By means of \eqref{eq:AM_regularity_for_disjunctive_programs}, we find
		$x^*\in\mathcal K(\bar x)=\mathcal M(\bar x,0)$, i.e., $\bar x$ is AM-regular.
\end{proof}

The following example points out that the assertion of \cref{thm:AM_regularity_for_disjunctive_programs}
does not need to be true whenever the set $K$ is not disjunctive, i.e., in this setting,
\eqref{eq:AM_regularity_for_disjunctive_programs} does not provide a constraint qualification
in general.
\begin{example}\label{ex:non_disjunctive_rhs}
	We investigate the setting where $G\colon\R\to\R^2$ is given by $G(x):=(x,0)$ for
	all $x\in\R$ and $K\subset\R^2$ is given by $K:=\{y\in\R^2\,|\,y_2\geq y_1^2\}$.
	Obviously, $K$ is not of disjunctive structure. The only feasible point of the
	associated constraint system $G(x)\in K$ is $\bar x:=0$. 
	The mapping $\mathcal K$ from \cref{thm:AM_regularity_for_disjunctive_programs}
	is given by $\mathcal K(x)\equiv \{0\}$ in this situation which is why the
	condition \eqref{eq:AM_regularity_for_disjunctive_programs} holds trivially.
	On the other hand, one can check
	\[
		\forall k\in\N\colon\quad
		\mathcal M\bigl(\tfrac1k,(0,-\tfrac1{k^2})\bigr)=\R_+\qquad
		\mathcal M\bigl(-\tfrac1k,(0,-\tfrac1{k^2})\bigr)=\R_-
	\] 
	and $\mathcal M(\bar x,(0,0))=\{0\}$, i.e., $\bar x$ is not AM-regular.\\
	Let us mention that one could also check the validity of the constraint
	qualification from \cref{rem:convexity} which applies to the present
	situation since $K$ is convex and $G$ is continuously differentiable.
	The latter, however, is violated as well.
\end{example}

In the literature on disjunctive programs, there exist two other weak constraint
qualifications which we will recall below, see \cite[Definition~6]{FlegelKanzowOutrata2007}.
\begin{definition}\label{def:GACQ_and_GGCQ}
	Fix a feasible point $\bar x\in M$.
	We define the \emph{linearization cone} to $M$ at $\bar x$ as stated below:
	\[
		\mathcal L_M(\bar x):=\{d\in\R^n\,|\,G'(\bar x)d\in\mathcal T_K(G(\bar x))\}.
	\]
	We say that
	\begin{enumerate}
		\item[(a)] the \emph{generalized Abadie constraint qualification} (GACQ) holds at
			the point $\bar x$ whenever the relation $\mathcal T_M(\bar x)=\mathcal L_M(\bar x)$
			is valid,
		\item[(b)] the \emph{generalized Guignard constraint qualification} (GGCQ) holds at the point
			$\bar x$ whenever the relation $\widehat{\mathcal N}_M(\bar x)=\mathcal L_M(\bar x)^\circ$
			is valid.
	\end{enumerate}
\end{definition}

Let us briefly mention that the linearization cone introduced above is, by the special structure
of $K$, also polyhedral in the sense that it is the union of finitely many convex polyhedral
cones. This is a simple consequence of
\[
	\mathcal T_K(G(\bar x))=\bigcup\limits_{i\in J(\bar x)}\mathcal T_{D_i}(G(\bar x))
\]
where we used $J(\bar x):=\{i\in\{1,\ldots,p\}\,|\,G(\bar x)\in D_i\}$ and $\bar x\in M$,
see \cite[Table~4.1]{AubinFrankowska2009}.
It has been shown in \cite[Theorem~7]{FlegelKanzowOutrata2007} that whenever $\bar x\in M$ is
a local minimizer of \eqref{eq:disjunctive_program} where GGCQ holds, then $\bar x$ is already
an M-stationary point. As pointed out in \cite{FlegelKanzowOutrata2007}, this result does not
need to hold anymore whenever continuous differentiability of $f$ is replaced by local
Lipschitz continuity.

Clearly, one could also define GACQ and GGCQ in the situation where $K$ is a general closed set.
In this case, however, GGCQ on its own does not necessarily provide a constraint qualification
ensuring M-stationarity of local minimizers.
As pointed out in \cite[Proposition~3]{BenkoGfrerer2017}, some additional metric subregularity
of a linearized feasibility mapping is needed in this more general situation, see
\cite{Gfrerer2019} as well, and the latter is inherent whenever $K$ is of disjunctive structure
due to Robinson's classical result on the inherent calmness of polyhedral set-valued mappings.

Let us now focus on \eqref{eq:disjunctive_program} again.
Let us fix one of its feasible points $\bar x\in M$.
It is well known that metric subregularity of the feasibility mapping
$\Phi\colon\R^n\tto\R^m$, given by $\Phi(x)=G(x)-K$ for all $x\in\R^n$ in the present situation,
at $(\bar x,0)$ implies validity of GACQ which, in turn, implies validity of GGCQ, see
\cite[formula~(13)]{FlegelKanzowOutrata2007}.
Noting that \eqref{eq:disjunctive_program} covers standard nonlinear problems while 
AM-regularity coincides with CCP in this setting, see \cref{ex:recovering_CCP}, the considerations
from \cite[Section~4.2]{AndreaniMartinezRamosSilva2016} show that validity of GACQ at $\bar x$ is
not sufficient for AM-regularity of $\bar x$. 
In the following example, we show that validity of AM-regularity does not need to imply validity
of GGCQ.
\begin{example}\label{eq:AM_regularity_and_GGCQ}
	Let us consider the mapping $G\colon\R\to\R^2$ given by $G(x):=(x,x^3)$ for all $x\in\R$
	as well as the disjunctive set $K:=D_1\cup D_2$ where $D_1:=\R_-\times\R$ and $D_2:=\R_+\times\R_-$
	hold. 
	In this situation, we have $M=(-\infty,0]$. Let us fix $\bar x:=0$.
	One can easily check that $\mathcal T_K(G(\bar x))=K$ holds. 
	We conclude 
	\[
		\mathcal L_M(\bar x)=\{d\in\R\,|\,(d,0)\in K\}=\R,
	\]
	and this shows that GACQ and GGCQ are violated at $\bar x$
	since we have $\mathcal T_M(\bar x)=\R_-$.
	On the other hand, we have 
	\[
		G'(x)^\top\mathcal N_K(G(\bar x))
		=
		\{\lambda_1+3x^2\lambda_2\,|\,(\lambda_1,\lambda_2)\in(\R_+\times\{0\})\cup(\{0\}\times\R_+)\}
		=
		\R_+
	\]
	for each $x\in\R$ and, thus, due to \cref{thm:AM_regularity_for_disjunctive_programs},
	$\bar x$ is AM-regular.
\end{example}

The above considerations show that AM-regularity for disjunctive programs is not related to
the constraint qualifications GACQ and GGCQ. In the particular case of MPCCs, this
already has been observed in \cite[Section~4]{Ramos2019}. Due to the results of
\cref{sec:geometric_constraints}, AM-regularity is generally weaker than NNAMCQ, i.e.,
\[
	G'(\bar x)^\top\lambda=0,\,\lambda\in \mathcal N_K(G(\bar x))
	\quad
	\Longrightarrow
	\quad
	\lambda=0,
\]
and the later is, again, weaker than the problem-tailored version of the linear independence
constraint qualification discussed in \cite{Mehlitz2019b}.

\subsection{Variational calculus and asymptotic regularity}\label{sec:variational_calculus}

In this section, we are going to show how the concept of asymptotic regularity can be used to
establish some fundamental calculus rules for limiting normals and the 
limiting coderivative.

First, we show that asymptotic regularity may serve as a sufficient condition for
the validity of the intersection rule for limiting normals.
\begin{theorem}\label{thm:intersection_rule}
	Let $K,C\subset\R^n$ be closed sets and fix $\bar x\in K\cap C$.
	Suppose that the qualification condition
	\begin{equation}\label{eq:sequential_CQ_for_intersections}
		\limsup\limits_{x\to\bar x,\,x'\to\bar x}
		\bigl(\mathcal N_{K}(x)+\mathcal N_C(x')\bigr)
		\subset
		\mathcal N_K(\bar x)+\mathcal N_C(\bar x)
	\end{equation}
	holds. Then we have
	\[
		\mathcal N_{K\cap C}(\bar x)
		\subset
		\mathcal N_K(\bar x)+\mathcal N_C(\bar x).
	\]
\end{theorem}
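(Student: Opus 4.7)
The plan is to realize $K\cap C$ as the feasible set of a suitably tailored instance of \eqref{eq:basic_problem} and then invoke \cref{thm:representation_of_limiting_normal_cone}. To this end, I would define the set-valued mapping $\Phi\colon\R^n\tto\R^n\times\R^n$ by
\[
	\Phi(x):=\begin{pmatrix}x-K\\x-C\end{pmatrix},\qquad x\in\R^n.
\]
This mapping has closed graph since $K$ and $C$ are closed, and its kernel equals $K\cap C$, so $\bar x$ is a feasible point of the corresponding problem \eqref{eq:basic_problem}. The decisive feature of this construction is that the codomain of $\Phi$ carries two independent perturbation variables, which is what will let us recover the two independent sequences $x\to\bar x$ and $x'\to\bar x$ appearing in \eqref{eq:sequential_CQ_for_intersections}.

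Next, I would compute the coderivative of $\Phi$. A direct application of the change-of-coordinates formula from \cite[Theorem~1.17]{Mordukhovich2006} (using the invertible linear map $(x,y,z)\mapsto(x-y,x-z)$ together with $\mathcal N_{K\times C}(u,u')=\mathcal N_K(u)\times\mathcal N_C(u')$, cf.\ also \cref{lem:product_rule}) yields
\[
	D^*\Phi(x,(y,z))(y^*,z^*)=\{y^*+z^*\}
\]
whenever $y^*\in\mathcal N_K(x-y)$ and $z^*\in\mathcal N_C(x-z)$, and the empty set otherwise. Consequently, the map $\mathcal M\colon\R^n\times(\R^n\times\R^n)\tto\R^n$ from \cref{sec:asymptotic_regularity} takes the form
\[
	\mathcal M(x,(y,z))=\mathcal N_K(x-y)+\mathcal N_C(x-z).
\]

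Now the key observation is that as $x\to\bar x$ and $(y,z)\to(0,0)$ the arguments $x-y$ and $x-z$ vary freely and \emph{independently} in neighborhoods of $\bar x$. Hence
\[
	\limsup_{x\to\bar x,\,(y,z)\to(0,0)}\mathcal M(x,(y,z))
	=\limsup_{u\to\bar x,\,u'\to\bar x}\bigl(\mathcal N_K(u)+\mathcal N_C(u')\bigr),
\]
so the hypothesis \eqref{eq:sequential_CQ_for_intersections} is nothing else than AM-regularity of $\bar x$ for the problem associated with $\Phi$. Applying \cref{thm:representation_of_limiting_normal_cone} then yields
\[
	\mathcal N_{K\cap C}(\bar x)\subset\mathcal M(\bar x,(0,0))=\mathcal N_K(\bar x)+\mathcal N_C(\bar x),
\]
which is exactly the claim.

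The only subtlety I anticipate is making sure that the two limit variables in \eqref{eq:sequential_CQ_for_intersections} really do correspond to independent degrees of freedom in the $\limsup$ defining AM-regularity; had I instead used a single-perturbation mapping such as $x\mapsto x-(K\cap C)$, the two normal cone arguments would be forced to coincide and the reduction would fail. Splitting the image space into two copies, one for each of the sets $K$ and $C$, is precisely what unlocks the correspondence and makes the application of \cref{thm:representation_of_limiting_normal_cone} match the stated hypothesis exactly.
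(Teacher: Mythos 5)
Your proof is correct and follows essentially the same route as the paper: the paper obtains the result by applying \cref{thm:representation_limiting_normal_cone_d_AMS_regularity} in the geometric/decoupled setting with $G:=\operatorname{id}$, which produces exactly your mapping $\Phi(x)=(x-K,x-C)$ and identifies \eqref{eq:sequential_CQ_for_intersections} with dAM-regularity. Your only (harmless) deviation is to invoke the undecoupled estimate \cref{thm:representation_of_limiting_normal_cone} instead, which works because, as you verify, letting the $C$-component be perturbed as well does not enlarge the Painlev\'{e}--Kuratowski upper limit, so for this particular $\Phi$ both AM- and dAM-regularity coincide with \eqref{eq:sequential_CQ_for_intersections}.
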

\begin{proof}
	This result is a simple consequence of our considerations from
	\cref{sec:geometric_constraints} and
	\cref{thm:representation_limiting_normal_cone_d_AMS_regularity} 
	when fixing $G\colon\R^n\to\R^n$ to be the identity mapping.
	Indeed, validity of \eqref{eq:sequential_CQ_for_intersections} 
	is equivalent to the validity of dAM-regularity for the
	constraint system $M:=K\cap C$.
\end{proof}

Following the structure of the books \cite{Mordukhovich2006,Mordukhovich2018},
the intersection rule provides the fundamental basis of the overall variational 
calculus. Classically, validity of the intersection rule at some point 
$\bar x\in K\cap C$ is guaranteed by
the so-called normal qualification condition
\[
	\mathcal N_K(\bar x)\cap\left(-\mathcal N_C(\bar x)\right)=\{0\},
\]
and the latter is equivalent to metric regularity of the mapping
$x\tto(x-K)\times(x-C)$ at $(\bar x,(0,0))$. 
Following the arguments from \cref{sec:geometric_constraints}, metric
subregularity of this mapping at $(\bar x,(0,0))$ is already enough to
guarantee validity of the intersection rule. Apart from these classical
results, \cref{thm:intersection_rule} shows that the intersection rule
is also valid in the presence of the \emph{asymptotic stability condition}
\eqref{eq:sequential_CQ_for_intersections} which originates from the 
notion of asymptotic regularity. Keeping in mind our results from
\cref{sec:asymptotic_regularity}, \eqref{eq:sequential_CQ_for_intersections} is
independent of the aforementioned metric subregularity condition and, thus,
provides a new approach to the variational calculus. Exemplary, we will
show how the coderivative sum and chain rule can be derived in the
presence of asymptotic stability conditions.

Let us note that validity of \eqref{eq:sequential_CQ_for_intersections} 
is equivalent to 
\[
	\limsup\limits_{x\to\bar x,\,x'\to\bar x}
	\left(\widehat{\mathcal N}_{K}(x)+\widehat{\mathcal N}_C(x')\right)
	\subset
	\mathcal N_K(\bar x)+\mathcal N_C(\bar x)
\]
by definition of the limiting normal cone.
Thus, a direct proof of the intersection rule for limiting normals
under validity of \eqref{eq:sequential_CQ_for_intersections} can
be obtained from the \emph{fuzzy} intersection rule for regular
normals, see \cite[Lemma~3.1]{Mordukhovich2006}, and a simple
diagonal sequence argument.

Next, we will inspect how validity of the coderivative sum rule can be
guaranteed under an asymptotic stability condition.
Let us mention that in \cite[Theorem~3.10]{Mordukhovich2006},
\cite[Theorem~3.9]{Mordukhovich2018}, or \cite[Theorem~10.41]{RockafellarWets1998},
the coderivative sum rule has been derived under validity of the Mordukhovich criterion.
In order to proceed, we fix set-valued mappings $S_1,S_2\colon\R^n\tto\R^m$
with closed graphs and consider their sum mapping $S\colon\R^n\tto\R^m$ given by
\[
	\forall x\in\R^n\colon\quad
	S(x):=S_1(x)+S_2(x).
\]
Furthermore, we make use of the \emph{intermediate} mapping 
$\Xi\colon\R^n\times\R^m\tto\R^m\times\R^m$ given by
\begin{equation}\label{eq:intermediate_map}
	\forall x\in\R^n\,\forall y\in\R^m\colon\quad
	\Xi(x,y):=\{
					(y_1,y_2)\in\R^m\times\R^m\,|\,
					y_1+y_2=y,\, y_1\in S_1(x),\,y_2\in S_2(x)
				\}.
\end{equation}
Observe that we have $\dom\Xi=\gph S$.
\begin{theorem}\label{thm:coderivative_sum_rule}
	Fix some point $(\bar x,\bar y)\in\gph S$.
	Then the following assertions hold.
	\begin{enumerate}
		\item[(a)] Assume that there exists $(\bar y_1,\bar y_2)\in\Xi(\bar x,\bar y)$
			such that $\Xi$ is inner semicontinuous at $((\bar x,\bar y),(\bar y_1,\bar y_2))$.
			Furthermore, let the qualification condition
			\begin{equation}\label{eq:sequential_CQ_sum}
				\begin{aligned}
				\limsup\limits_{
				\substack{
					(x_1,y_1)\to(\bar x,\bar y_1)\\
					(x_2,y_2)\to(\bar x,\bar y_2)\\
					(y_{1}^*,y_2^*)\to (\bar y_1^*,\bar y^*_2)
					}
				}
				\bigl(
					D^*S_1(x_1,y_1)(y_{1}^*)&+D^*S_2(x_2,y_2)(y_{2}^*)				
				\bigr)
				\\
				&
				\subset
					D^*S_1(\bar x,\bar y_1)(\bar y_1^*)+D^*S_2(\bar x,\bar y_2)(\bar y_2^*)				
				\end{aligned}
			\end{equation}
			hold for all $\bar y^*_1,\bar y^*_2\in\R^m$. Then, for all $y^*\in\R^m$, we have
			\[
				D^*S(\bar x,\bar y)(y^*)
				\subset
				D^*S_1(\bar x,\bar y_1)(y^*)+D^*S_2(\bar x,\bar y_2)(y^*).
			\]
		\item[(b)] Assume that $\Xi$ is inner semicompact at $(\bar x,\bar y)$.
			Furthermore, let the qualification condition \eqref{eq:sequential_CQ_sum}
			hold for each $(\bar y_1,\bar y_2)\in\Xi(\bar x,\bar y)$ 
			and all $\bar y_1^*,\bar y_2^*\in\R^m$.
			Then, for all $y^*\in\R^m$, we have
			\[
				D^*S(\bar x,\bar y)(y^*)
				\subset
				\bigcup\limits_{(\bar y_1,\bar y_2)\in\Xi(\bar x,\bar y)}
				\bigl(D^*S_1(\bar x,\bar y_1)(y^*)+D^*S_2(\bar x,\bar y_2)(y^*)\bigr).
			\]
	\end{enumerate}
\end{theorem}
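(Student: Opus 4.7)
The plan is to reduce the sum rule to an intersection argument via the auxiliary product mapping $\mathcal G\colon\R^n\tto\R^m\times\R^m$ given by $\mathcal G(x):=S_1(x)\times S_2(x)$. Observe that $\gph\mathcal G=\Omega_1\cap\Omega_2$ with $\Omega_1:=\{(x,y_1,y_2)\,|\,(x,y_1)\in\gph S_1\}$ and $\Omega_2:=\{(x,y_1,y_2)\,|\,(x,y_2)\in\gph S_2\}$, while $\gph S$ is the image of $\gph\mathcal G$ under the surjective linear map $L(x,y_1,y_2):=(x,y_1+y_2)$. The fibers $L^{-1}(x,y)\cap\gph\mathcal G$ are precisely the sets $\{(x,y_1,y_2)\,|\,(y_1,y_2)\in\Xi(x,y)\}$ from \eqref{eq:intermediate_map}, which is what makes $\Xi$ the relevant object.

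For part (a), I would fix $x^*\in D^*S(\bar x,\bar y)(y^*)$ and use the definition of the limiting normal cone together with the inner semicontinuity of $\Xi$ at $((\bar x,\bar y),(\bar y_1,\bar y_2))$ to extract sequences $(x_k,y_k)\in\gph S$ with $(x_k,y_k)\to(\bar x,\bar y)$, regular normals $(x_k^*,-y_k^*)\in\widehat{\mathcal N}_{\gph S}(x_k,y_k)$ with $(x_k^*,-y_k^*)\to(x^*,-y^*)$, and selections $(y_{1,k},y_{2,k})\in\Xi(x_k,y_k)$ with $(y_{1,k},y_{2,k})\to(\bar y_1,\bar y_2)$. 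A direct estimate on the incremental quotient defining regular normals, using that $L$ is linear (hence bi-Lipschitz on its image direction), shows that the pullback $(x_k^*,-y_k^*,-y_k^*)$ lies in $\widehat{\mathcal N}_{\gph\mathcal G}(x_k,y_{1,k},y_{2,k})$.

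Next, I would invoke the fuzzy intersection rule for regular normals applied to $\gph\mathcal G=\Omega_1\cap\Omega_2$ (e.g.\ \cite[Lemma~3.1]{Mordukhovich2006}) with tolerance $\epsilon_k\searrow 0$: this produces base points in $\Omega_1$ and $\Omega_2$ within $\epsilon_k$ of $(x_k,y_{1,k},y_{2,k})$ and a decomposition of $(x_k^*,-y_k^*,-y_k^*)$ into regular normals of $\Omega_1$ and $\Omega_2$ up to $\epsilon_k$-perturbations. Because $\Omega_i$ has product structure with one free $\R^m$-block, $\widehat{\mathcal N}_{\Omega_1}=\widehat{\mathcal N}_{\gph S_1}\times\{0\}$ and similarly for $\Omega_2$, forcing the dummy components in the decomposition to vanish. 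A diagonal extraction over $k$ and the intersection tolerance, combined with the inclusion of regular normals into limiting coderivatives, then delivers sequences $(\tilde x_{i,k},\tilde y_{i,k})\to(\bar x,\bar y_i)$, $\eta_{i,k}\to y^*$ and $\xi_{i,k}$ with $\xi_{i,k}\in D^*S_i(\tilde x_{i,k},\tilde y_{i,k})(\eta_{i,k})$ and $\xi_{1,k}+\xi_{2,k}\to x^*$. Applying the qualification condition \eqref{eq:sequential_CQ_sum} with the specific choice $\bar y_1^*=\bar y_2^*=y^*$ yields $x^*\in D^*S_1(\bar x,\bar y_1)(y^*)+D^*S_2(\bar x,\bar y_2)(y^*)$, proving (a). Part (b) follows the same template after noting that inner semicompactness of $\Xi$ at $(\bar x,\bar y)$ supplies, along any sequence $(x_k,y_k)\to(\bar x,\bar y)$ in $\gph S$, a bounded selection $(y_{1,k},y_{2,k})\in\Xi(x_k,y_k)$; closedness of $\gph S_1$ and $\gph S_2$ ensures any cluster point $(\bar y_1,\bar y_2)$ belongs to $\Xi(\bar x,\bar y)$, and the argument from (a) then applies to that cluster point.

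The main obstacle is bookkeeping around the fuzzy intersection rule: one must simultaneously control the $\epsilon_k$-errors in the base points and in the decomposition, run a diagonal argument against the outer approximation of the limiting normal to $\gph S$, and, crucially, verify that the two multiplier components $\eta_{1,k},\eta_{2,k}$ can be steered to the \emph{common} limit $y^*$ rather than two independent limits, since only this coincidence makes the sequential qualification condition \eqref{eq:sequential_CQ_sum} yield the desired inclusion with the same $y^*$ on both summands.
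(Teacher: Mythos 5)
Your overall skeleton agrees with the paper's up to one decisive substitution. Like the paper, you first pull the coderivative of $S$ back to a limiting normal of $\Omega_1\cap\Omega_2$ at $(\bar x,\bar y_1,\bar y_2)$ via the inner semicontinuity of $\Xi$, and you correctly observe that the product structure forces the dummy blocks of the $\Omega_i$-normals to vanish; your worry about steering $\eta_{1,k},\eta_{2,k}$ to a \emph{common} limit is in fact a non-issue, since the vector being decomposed is $(x^*,-y^*,-y^*)$ and its second and third components both converge to $-y^*$. The genuine divergence is in how the intersection is split: the paper invokes its own asymptotic intersection rule, \cref{thm:intersection_rule} (after explicitly verifying that \eqref{eq:sequential_CQ_sum} implies the stability condition \eqref{eq:sequential_CQ_for_intersections} for $\Omega_1,\Omega_2$), and that rule is itself proved through the variational description of regular normals and the penalization argument of \cref{thm:d_AMS_points_via_Aubin_property} -- a route in which the ``objective'' term always carries coefficient one. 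You instead apply the fuzzy intersection rule \cite[Lemma~3.1]{Mordukhovich2006} directly and diagonalize.

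That is where the gap sits. Lemma~3.1 does not decompose $(x_k^*,-y_k^*,-y_k^*)$; it produces $\lambda_k\geq 0$ and regular normals $u_{1,k}^*,u_{2,k}^*$ to $\Omega_1,\Omega_2$ (up to $\epsilon_k$-perturbations) with $\lambda_k(x_k^*,-y_k^*,-y_k^*)=u_{1,k}^*+u_{2,k}^*$ and $\max\{\lambda_k,\norm{u_{1,k}^*}\}=1$. In the nondegenerate case $\liminf_k\lambda_k>0$ your argument goes through after rescaling by $\lambda_k^{-1}$. But in the degenerate case $\lambda_k\to 0$ one only obtains a nonzero $w\in D^*S_1(\bar x,\bar y_1)(0)\cap\bigl(-D^*S_2(\bar x,\bar y_2)(0)\bigr)$; the classical normal qualification condition excludes this, whereas the purely asymptotic condition \eqref{eq:sequential_CQ_sum} does not -- applied to that degenerate sequence it merely returns the vacuous inclusion $0\in D^*S_1(\bar x,\bar y_1)(0)+D^*S_2(\bar x,\bar y_2)(0)$ and gives no decomposition of $x^*$ whatsoever. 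Your write-up silently assumes the decomposition holds with $\lambda_k\equiv 1$. To close the argument you must either add a hypothesis ruling out the degenerate alternative or, as the paper does, replace the fuzzy-rule step by \cref{thm:intersection_rule} and its penalization-based proof, which never produces a Fritz--John-type leading multiplier.
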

\begin{proof}
	The proof essentially relies on the normal cone intersection rule from
	\cref{thm:intersection_rule} and adapts the arguments used to verify 
	\cite[Theorem~3.10]{Mordukhovich2006}.
	\begin{enumerate}
		\item[(a)] Fix $x^*\in D^*S(\bar x,\bar y)(y^*)$ for an arbitrarily chosen $y^*\in\R^m$. 
			Mimicking the proof of \cite[Theorem~3.10(i)]{Mordukhovich2006} and 
			exploiting the postulated inner semicontinuity of $\Xi$, we find
			the relation 
			$(x^*,-y^*,-y^*)\in\mathcal N_{\Omega_1\cap\Omega_2}(\bar x,\bar y_1,\bar y_2)$
			where we used the closed sets $\Omega_1,\Omega_2\subset\R^n\times\R^m\times\R^m$
			given by $\Omega_i:=\{(x,y_1,y_2)\,|\,y_i\in S_i(x)\}$, $i=1,2$.		
			
			Let us now show that \eqref{eq:sequential_CQ_sum} is sufficient for the applicability
			of the normal cone intersection rule from \cref{thm:intersection_rule} 
			for the estimation of $\mathcal N_{\Omega_1\cap\Omega_2}(\bar x,\bar y_1,\bar y_2)$
			from above.
			Therefore, we show that \eqref{eq:sequential_CQ_for_intersections} holds for the
			situation at hand. Choose sequences $\{x^*_k\}_{k\in\N}\subset\R^n$,
			$\{y^*_{k,1}\}_{k\in\N},\{y^*_{k,2}\}_{k\in\N}\subset\R^m$ as well as
			$\{x_{k,1}\}_{k\in\N},\{x_{k,2}\}_{k\in\N}\subset\R^n$ and
			$\{y_{k,1}^1\}_{k\in\N},\{y_{k,2}^1\}_{k\in\N},
			\{y_{k,1}^2\}_{k\in\N},\{y_{k,2}^2\}_{k\in\N}\subset\R^m$ such that
			$x_{k,1}\to\bar x$, $x_{k,2}\to\bar x$, $y_{k,1}^1\to\bar y_1$, $y_{k,2}^1\to\bar y_2$,
			$y_{k,1}^2\to\bar y_1$, $y_{k,2}^2\to\bar y_2$,
			$x^*_k\to x^*$ for some $x^*\in\R^n$, $y^*_{k,1}\to y^*_1$ as well as 
			$y^*_{k,2}\to y^*_2$ for some $y^*_1,y^*_2\in\R^m$, and
			\[
				(x^*_k,y^*_{k,1},y^*_{k,2})
				\in  
				\mathcal N_{\Omega_1}(x_{k,1},y_{k,1}^1,y_{k,2}^1)
				+
				\mathcal N_{\Omega_2}(x_{k,2},y_{k,1}^2,y_{k,2}^2)
			\]
			for all $k\in\N$ hold. By construction of $\Omega_1$ and $\Omega_2$, this 
			guarantees the existence of sequences 
			$\{x_{k,1}^*\}_{k\in\N},\{x_{k,2}^*\}_{k\in\N}\subset\R^n$
			such that $x^*_k=x^*_{k,1}+x^*_{k,2}$ as well as
			$(x^*_{k,i},y^*_{k,i})\in\mathcal N_{\gph S_i}(x_{k,i},y_{k,i}^i)$, $i=1,2$,
			for all $k\in\N$ hold. This leads to
			\[
				\forall k\in \N\colon\quad
				x_k^*\in D^*S_1(x_{k,1},y_{k,1}^1)(-y_{k,1}^*)+D^*S_2(x_{k,2},y_{k,2}^2)(-y_{k,2}^*).
			\]
			Due to validity of \eqref{eq:sequential_CQ_sum}, we find 
			$x^*\in D^*S_1(\bar x,\bar y_1)(-y_1^*)+D^*S_2(\bar x,\bar y_2)(-y_2^*)$,
			i.e., there are points $x_1^*,x_2^*\in\R^n$ with 
			$(x^*_i,y^*_i)\in\mathcal N_{\gph S_i}(\bar x,\bar y_i)$, $i=1,2$,
			and $x^*=x_1^*+x_2^*$.
			Particularly, we have 
			\[
				(x^*,y^*_1,y^*_2)\in
				\mathcal N_{\Omega_1}(\bar x,\bar y_1,\bar y_2)
				+
				\mathcal N_{\Omega_2}(\bar x,\bar y_1,\bar y_2).
			\]
			
			Due to the above considerations, 
			we can apply \cref{thm:intersection_rule} in order to obtain
			\[
				(x^*,-y^*,-y^*)\in
				\mathcal N_{\Omega_1}(\bar x,\bar y_1,\bar y_2)
				+
				\mathcal N_{\Omega_2}(\bar x,\bar y_1,\bar y_2).
			\]
			Now, the claim follows by definition of the sets $\Omega_1$ and $\Omega_2$,
			cf.\ \cite[proof of Theorem~3.10]{Mordukhovich2006}.
		\item[(b)] Fixing $x^*\in D^*S(\bar x,\bar y)(y^*)$ for an arbitrarily chosen
			$y^*\in\R^m$, the inner semicompactness of $\Xi$ at $(\bar x,\bar y)$ can
			be used to obtain 
			\[
				(x^*,-y^*,-y^*)
				\in
				\bigcup\limits_{(\bar y_1,\bar y_2)\in\Xi(\bar x,\bar y)}
				\mathcal N_{\Omega_1\cap\Omega_2}(\bar x,\bar y_1,\bar y_2),
			\]
			see \cite[proof of Theorem~3.10(ii)]{Mordukhovich2006} as well.
			Proceeding as in the proof of (a), the claim follows.
	\end{enumerate}
\end{proof}

Finally, we would like to take a look at the coderivative chain rule.
Therefore, let us consider set-valued mappings $T_1\colon\R^n\tto\R^m$
and $T_2\colon\R^m\tto\R^\ell$ with closed graphs as well as their composition 
$T\colon\R^n\tto\R^\ell$ given by
\[
	\forall x\in\R^n\colon\quad
	T(x):=\bigcup\limits_{y\in T_1(x)}T_2(y).
\]
Again, we will make use of an \emph{intermediate} mapping $\Theta\colon\R^n\times\R^\ell\tto\R^m$
which is given as stated below:
\[
	\forall x\in\R^n\,\forall z\in\R^\ell\colon\quad
	\Theta(x,z):=\{y\in T_1(x)\,|\,z\in T_2(y)\}.
\]
Once more, we note that $\gph T=\dom \Theta$ is valid. Similar as in
\cite[Theorem~3.13]{Mordukhovich2006} or \cite[Theorem~3.11]{Mordukhovich2018},
we will derive the coderivative chain rule from the coderivative sum rule.
Exploiting \cref{thm:coderivative_sum_rule} for that purpose, we will see
that validity of the chain rule can be guaranteed in the presence of an asymptotic stability condition. In \cite{Mordukhovich2006,Mordukhovich2018} or
\cite[Theorem~10.37]{RockafellarWets1998}, a condition related to the
Mordukhovich criterion has been imposed for that purpose.
\begin{theorem}\label{thm:coderivative_chain_rule}
	Fix some point $(\bar x,\bar z)\in\gph T$.
	Then the following assertions hold.
	\begin{enumerate}
		\item[(a)] Assume that there exists $\bar y\in\Theta(\bar x,\bar z)$ such that
			$\Theta$ is inner semicontinuous at $((\bar x,\bar z),\bar y)$.
			Furthermore, let the qualification condition
			\begin{equation}\label{eq:sequential_CQ_chain_rule}
				\begin{aligned}
					\limsup\limits_{
						\substack{
							(x,y^1)\to(\bar x,\bar y)\\
							(y^2,z)\to(\bar y,\bar z)\\
							(x^*,z^*)\to(\bar x^*,\bar z^*)
						}
					}
					\bigl(
						D^*T_2(y^2,z)(z^*)&-(D^*T_1(x,y^1))^{-1}(x^*)
					\bigr)\\
					&
					\subset
					D^*T_2(\bar y,\bar z)(\bar z^*)-(D^*T_1(\bar x,\bar y))^{-1}(\bar x^*)
				\end{aligned}
			\end{equation}
			hold for each $\bar x^*\in\R^n$ and $\bar z^*\in\R^\ell$. 
			Then, for each $z^*\in\R^\ell$, we have
			\[
				D^*T(\bar x,\bar z)(z^*)
				\subset
				\bigcup\limits_{y^*\in D^*T_2(\bar y,\bar z)(z^*)}D^*T_1(\bar x,\bar y)(y^*).
			\]
		\item[(b)] Assume that $\Theta$ is inner semicompact at $(\bar x,\bar z)$.
			Furthermore, let the qualification condition \eqref{eq:sequential_CQ_chain_rule}
			hold for each $\bar y\in\Theta(\bar x,\bar z)$, $\bar x^*\in\R^n$, and $\bar z^*\in\R^\ell$.
			Then, for each $z^*\in\R^\ell$, we have
			\[
				D^*T(\bar x,\bar z)(z^*)
				\subset
				\bigcup\limits_{\bar y\in\Theta(\bar x,\bar z)}
				\bigcup\limits_{y^*\in D^*T_2(\bar y,\bar z)(z^*)}D^*T_1(\bar x,\bar y)(y^*).
			\]
	\end{enumerate}
\end{theorem}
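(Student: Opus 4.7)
My plan is to reduce the chain rule to the normal cone intersection rule (\cref{thm:intersection_rule}), in the same spirit used to prove the sum rule (\cref{thm:coderivative_sum_rule}). The key observation is that $\gph T$ is the image, under the projection $\pi\colon(x,y,z)\mapsto(x,z)$, of the set $\Omega_1\cap\Omega_2$, where
\[
    \Omega_1:=\gph T_1\times\R^\ell,\qquad \Omega_2:=\R^n\times\gph T_2
\]
are closed subsets of $\R^n\times\R^m\times\R^\ell$, and the intermediate mapping $\Theta$ records exactly the $y$-lift satisfying $(x,y,z)\in\Omega_1\cap\Omega_2$.

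To prove assertion (a), I would fix $z^*\in\R^\ell$ and pick $x^*\in D^*T(\bar x,\bar z)(z^*)$, i.e.\ $(x^*,-z^*)\in\mathcal N_{\gph T}(\bar x,\bar z)$. By definition of the limiting normal cone, there exist sequences $(x_k,z_k)\to(\bar x,\bar z)$ in $\gph T$ and $(x_k^*,-z_k^*)\in\widehat{\mathcal N}_{\gph T}(x_k,z_k)$ with $(x_k^*,-z_k^*)\to(x^*,-z^*)$. Using inner semicontinuity of $\Theta$ at $((\bar x,\bar z),\bar y)$ I can select $y_k\in\Theta(x_k,z_k)$ with $y_k\to\bar y$, so that $(x_k,y_k,z_k)\in\Omega_1\cap\Omega_2$. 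A short calculation, based on the fact that $\pi$ does not increase distances, yields that the padded vector $(x_k^*,0,-z_k^*)$ belongs to $\widehat{\mathcal N}_{\Omega_1\cap\Omega_2}(x_k,y_k,z_k)$; taking the limit produces $(x^*,0,-z^*)\in\mathcal N_{\Omega_1\cap\Omega_2}(\bar x,\bar y,\bar z)$.

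Next I apply \cref{thm:intersection_rule} to $\Omega_1,\Omega_2$ at $(\bar x,\bar y,\bar z)$. Using the identifications $\mathcal N_{\Omega_1}(x,y,z)=\mathcal N_{\gph T_1}(x,y)\times\{0\}$ and $\mathcal N_{\Omega_2}(x,y,z)=\{0\}\times\mathcal N_{\gph T_2}(y,z)$, and reinterpreting pairs in $\mathcal N_{\gph T_i}$ as coderivative inclusions via $x^*\in D^*T_1(x,y)(y^*)\iff(x^*,-y^*)\in\mathcal N_{\gph T_1}(x,y)$ and the analogous identity for $T_2$, one verifies that the hypothesis \eqref{eq:sequential_CQ_chain_rule} (quantified over all admissible $\bar x^*,\bar z^*$) is exactly the translation of the intersection-rule qualification \eqref{eq:sequential_CQ_for_intersections} applied to the pair $(\Omega_1,\Omega_2)$. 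Hence $(x^*,0,-z^*)$ decomposes as a sum of normals to $\Omega_1$ and $\Omega_2$; reading off the three coordinates yields some $y^*\in\R^m$ with $(x^*,-y^*)\in\mathcal N_{\gph T_1}(\bar x,\bar y)$ and $(y^*,-z^*)\in\mathcal N_{\gph T_2}(\bar y,\bar z)$, which is precisely $x^*\in D^*T_1(\bar x,\bar y)(y^*)$ and $y^*\in D^*T_2(\bar y,\bar z)(z^*)$, as claimed.

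For assertion (b) I would proceed in the same manner, replacing inner semicontinuity by inner semicompactness: given a sequence $(x_k,z_k)\to(\bar x,\bar z)$ from the definition of the limiting normal, inner semicompactness provides a bounded sequence $y_k\in\Theta(x_k,z_k)$, and extracting a convergent subsequence $y_k\to\bar y$ together with closedness of $\gph T_1$ and $\gph T_2$ gives $\bar y\in\Theta(\bar x,\bar z)$; the argument of (a) then applies to this particular $\bar y$, and since $\bar y$ depends on the subsequence, the final conclusion is a union over $\bar y\in\Theta(\bar x,\bar z)$. I expect the main obstacle to be the bookkeeping behind the equivalence between \eqref{eq:sequential_CQ_chain_rule} and \eqref{eq:sequential_CQ_for_intersections} for the pair $(\Omega_1,\Omega_2)$: one must correctly track that the first coordinate of the summand from $\mathcal N_{\Omega_1}$ realises $x^*$, the third coordinate of the summand from $\mathcal N_{\Omega_2}$ realises $-z^*$, and the free middle coordinate is precisely what produces the difference structure $D^*T_2(y^2,z)(z^*)-(D^*T_1(x,y^1))^{-1}(x^*)$ appearing in \eqref{eq:sequential_CQ_chain_rule}.
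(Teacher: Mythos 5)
Your proof is correct, but it takes a genuinely more direct route than the paper. The paper follows the classical Mordukhovich scheme: it first reduces the composition to a sum by introducing the auxiliary mapping $S(x,y)=\Delta_{\gph T_1}(x,y)+T_2(y)$, invokes \cite[Theorem~1.64]{Mordukhovich2006} (under inner semicontinuity of $\Theta$) to bound $D^*T$ by $D^*S$, and then applies the already-established coderivative sum rule \cref{thm:coderivative_sum_rule}, which requires translating \eqref{eq:sequential_CQ_chain_rule} into \eqref{eq:sequential_CQ_sum} and checking inner semicontinuity of the intermediate map $\Xi$. You instead collapse both layers into a single application of \cref{thm:intersection_rule} to $\Omega_1=\gph T_1\times\R^\ell$ and $\Omega_2=\R^n\times\gph T_2$; your padding argument showing $(x^*,0,-z^*)\in\mathcal N_{\Omega_1\cap\Omega_2}(\bar x,\bar y,\bar z)$ is essentially a re-proof of the relevant part of \cite[Theorem~1.64]{Mordukhovich2006} (it is correct: the numerator in the Fr\'{e}chet inequality is unchanged while the denominator only grows under the lift, and inner semicontinuity, resp.\ inner semicompactness, of $\Theta$ supplies the convergent $y$-components). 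The remaining bookkeeping — identifying $\mathcal N_{\Omega_1}(x,y^1,\cdot)=\mathcal N_{\gph T_1}(x,y^1)\times\{0\}$, $\mathcal N_{\Omega_2}(\cdot,y^2,z)=\{0\}\times\mathcal N_{\gph T_2}(y^2,z)$, and checking that a convergent sequence of sums automatically has convergent first and third components, so that \eqref{eq:sequential_CQ_chain_rule} quantified over all $\bar x^*,\bar z^*$ yields \eqref{eq:sequential_CQ_for_intersections} for this pair of sets — is the same computation that the paper performs inside the sum-rule verification, just without the dummy coordinate. What the paper's route buys is reuse of existing machinery and parallelism with \cite[Theorem~3.13]{Mordukhovich2006}; what your route buys is a shorter, self-contained argument that makes the role of the single qualification condition \eqref{eq:sequential_CQ_chain_rule} more transparent. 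Both are valid, and your treatment of part (b) via subsequence extraction and closedness of the graphs, yielding the union over $\bar y\in\Theta(\bar x,\bar z)$, is also sound.
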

\begin{proof}
	We only show validity of statement (a). Assertion (b) can be obtained in analogous way.
	For the proof of (a), we exploit the idea from \cite[proof of Theorem~3.13]{Mordukhovich2006}
	and consider the mapping $S\colon\R^n\times\R^m\tto\R^\ell$ given by
	\[
		\forall x\in\R^n\,\forall y\in\R^m\colon\quad
		S(x,y):=\Delta_{\gph T_1}(x,y)+T_2(y).
	\]
	Then \cite[Theorem~1.64]{Mordukhovich2006} yields
	\begin{equation}\label{eq:upper_estimate_coderivative_T}
		D^*T(\bar x,\bar z)(z^*)
		\subset
		\{
			x^*\in\R^n\,|\,(x^*,0)\in D^*S((\bar x,\bar y),\bar z)(z^*)
		\}
	\end{equation}
	for all $z^*\in\R^{\ell}$
	since $\Theta$ is inner semicontinuous at $((\bar x,\bar z),\bar y)$.
	
	In order to estimate the coderivative of $S$ from above, we make use of
	\cref{thm:coderivative_sum_rule}. Therefore, we introduce $S_1,S_2\colon\R^n\times\R^m\tto\R^\ell$
	by means of
	\[
		\forall x\in\R^n\,\forall y\in\R^m\colon\quad
		S_1(x,y):=\Delta_{\gph T_1}(x,y)
		\qquad
		S_2(x,y):=T_2(y).
	\]
	Next, we show that \eqref{eq:sequential_CQ_sum} holds in the present setting.
	Therefore, we make use of the formulas
	\begin{align*}
		D^*S_1((x,y),z)(z^*)
		&=
		\begin{cases}
			\mathcal N_{\gph T_1}(x,y)&z=0\\
			\varnothing					&z\neq 0
		\end{cases}
		\\
		D^*S_2((x,y),z)(z^*)
		&=
		\{0\}\times D^*T_2(y,z)(z^*)
	\end{align*}
	which, by elementary calculations, 
	hold for all $x\in\R^n$, $y\in\R^m$, and $z,z^*\in\R^\ell$.
	In order to infer validity of \eqref{eq:sequential_CQ_sum}, 
	we thus need to verify validity of
	\begin{equation}\label{eq:sequential_CQ_chain_rule_intemediate_form}
	\begin{aligned}
		\limsup\limits_
			{	\substack{
				(x,y^1)\to(\bar x,\bar y)\\
				(y^2,z)\to(\bar y,\bar z)\\
				z^*\to\bar z^*
				}
			}
		\bigl(\mathcal N_{\gph T_1}(x,y^1)&+\{0\}\times D^*T_2(y^2,z)(z^*)\bigr)\\
		& \subset
		\mathcal N_{\gph T_1}(\bar x,\bar y)+\{0\}\times D^*T_2(\bar y,\bar z)(\bar z^*)
	\end{aligned}
	\end{equation}
	for all $\bar z^*\in\R^\ell$.
	Thus, for some point $\bar z^*\in\R^{\ell}$, we fix sequences $\{x_k\}_{k\in\N}\subset\R^n$, 
	$\{y_k^1\}_{k\in\N},\{y_k^2\}_{k\in\N}\subset\R^m$, 
	$\{z_k\}_{k\in\N},\{z_k^*\}_{k\in\N}\subset\R^\ell$,
	as well as $\{x^*_k\}_{k\in\N}\subset\R^n$ and $\{y^*_k\}_{k\in\N}\subset\R^m$
	such that
	$x_k\to\bar x$, $y_k^1\to\bar y$, $y_k^2\to\bar y$, $z_k\to\bar z$, $z_k^*\to\bar z^*$,
	$x_k^*\to x^*$ and $y_k^*\to y^*$ for some $x^*\in\R^n$ and $y^*\in\R^m$, as well as
	\[
		(x^*_k,y^*_k)
		\in 
		\mathcal N_{\gph T_1}(x_k,y^1_k)+\{0\}\times D^*T_2(y^2_k,z_k)(z^*_k)
	\]
	for all $k\in\N$ hold. Keeping the definitions of the coderivative and the inverse mapping
	in mind, we find
	\[
		\forall k\in\N\colon\quad
		y^*_k\in D^*T_2(y_k^2,z_k)(z_k^*)-(D^*T_1(x_k,y_k^1))^{-1}(x_k^*).
	\]
	Inspecting \eqref{eq:sequential_CQ_chain_rule}, we obtain
	\[
		y^*\in D^*T_2(\bar y,\bar z)(\bar z^*)-(D^*T_1(\bar x,\bar y))^{-1}(x^*),
	\]
	i.e., $(x^*,y^*)\in\mathcal N_{\gph T_1}(\bar x,\bar y)+\{0\}\times D^*T_1(\bar x,\bar y)(\bar z^*)$.
	This shows validity of \eqref{eq:sequential_CQ_chain_rule_intemediate_form}.
	Observing that the intermediate mapping $\Xi$ from \eqref{eq:intermediate_map}
	is given by
	\begin{align*}
		\forall x\in\R^n\,\forall y\in\R^m\,\forall z\in\R^\ell\colon\quad
		\Xi(x,y,z)&=\{(0,z)\,|\,y\in T_1(x),\,z\in T_2(y)\}\\
				  &=\begin{cases}
				  	\{(0,z)\}	&((x,z),y)\in\gph \Theta\\ \varnothing&\text{otherwise}
				  	\end{cases}
	\end{align*}
	in this situation and, thus, 
	is trivially inner semicontinuous at $((\bar x,\bar y,\bar z),(0,\bar z))$
	by inner semicontinuity of $\Theta$ at $((\bar x,\bar z),\bar y)$,
	we can apply assertion (a) of \cref{thm:coderivative_sum_rule} in order to find
	\[
		D^*S((\bar x,\bar y),\bar z)(z^*)
		\subset
		\mathcal N_{\gph T_1}(\bar x,\bar y)
		+
		\{0\}\times D^*T_2(\bar y,\bar z)(z^*).
	\]
	Due to \eqref{eq:upper_estimate_coderivative_T}, the desired estimate
	is obtained.
\end{proof}

\section{Conclusions}\label{sec:conclusions}

In this paper, we introduced a new sequential constraint qualification, namely
AM-regularity, for nonsmooth
optimization problems. This concept has been shown to be generally weaker than metric regularity of the
associated feasibility mapping while it is not related to metric subregularity of the latter.
AM-regularity turned out to be a condition which is sufficient for the validity of the
pre-image rule from the limiting variational calculus.

We clarified how abstract constraints can be incorporated into the framework of AM-regularity
and presented some associated consequences for optimization problems with geometric constraints.
Our findings were applied to mathematical programs with disjunctive constraints as well.
This revealed that AM-regularity is a generalization of the cone-continuity property
(also referred to as AKKT-regularity)
for standard nonlinear problems and mathematical programs with complementarity constraints,
see \cite{AndreaniMartinezRamosSilva2016,Ramos2019}.
Keeping e.g.\ \cite{AndreaniFazzioSchuverdtSecchin2019,AndreaniHaeserViana2020,
BoergensKanzowMehlitzWachsmuth2019,Ramos2019} in mind, constraint qualifications of 
AM-regularity-type can be used to ensure convergence of different types of solution algorithms 
like augmented Lagrangian or relaxation methods to stationary points of several classes of
optimization problems. It is a promising subject of future research to investigate more
general algorithmic consequences of AM-regularity.

We finalized the paper by showing that asymptotic regularity provides a new approach to the limiting
variational calculus. It remains to be seen whether the resulting new asymptotic stability
conditions which ensure validity of the normal cone intersection rule, the coderivative
sum rule, or the coderivative chain rule can be used profitably in the context of
variational analysis. Following ideas from \cite{BenkoGfrererOutrata2019,Gfrerer2013}, it might
be possible to introduce a reasonable concept of \emph{directional} AM-regularity.
Such a concept may provide qualification conditions for optimization problems of type
\eqref{eq:basic_problem} and the \emph{directional} limiting variation calculus which are
even weaker than the criteria inferred from AM-regularity.

%%%%%%%%%%%%%%%%%%%%%%%%%%%%%%%%%%%%%%%%%%%%%%%
%\bibliographystyle{jnsao}
%\bibliography{references}

%%%%%%%%%%%%%%%%%%%%%%%%%%%%%%%%%%%%%%%%%%%%%%%

%%%%%%%%%%%%%%%%%%%%%%%%%%%%%%%%%%%%%%%%%%%%%%%
\end{document}